\documentclass[10pt]{article}
\usepackage{enumerate}
\usepackage{mathrsfs}
\usepackage{amsmath, color}
\usepackage{latexsym}
\usepackage{amssymb}
\usepackage{amsthm}
\usepackage{amsfonts}
\usepackage{dsfont}
\usepackage[hidelinks]{hyperref}
\usepackage{amsmath}
\usepackage{tikz}
\usetikzlibrary{arrows.meta}

\theoremstyle{plain}
\newtheorem{thm}{Theorem}[section]
\newtheorem{prop}[thm]{Proposition}

\newtheorem{lem}[thm]{Lemma}

\newtheoremstyle{osaka}
  {\topsep}
  {\topsep}
  {\itshape}
  {0pt}
  {\rmfamily}
  {.}
  { }
  {\thmname{#1}\thmnumber{ #2}\thmnote{ #3}}

\theoremstyle{osaka}
\newtheorem{dfn}[thm]{\bf Definition}

\newtheorem{rmk}[thm]{\bf Remark}
\newtheorem{exe}[thm]{\bf Example}

\numberwithin{equation}{section}
\newtheorem{corollary}[thm]{\bf Corollary}
\newtheorem*{theorem*}{\bf Theorem}

\newcommand{\IE}{{\mathbb{E}}}
\newcommand{\IP}{{\mathbb{P}}}
\newcommand{\IR}{{\mathbb{R}}}
\newcommand{\FF}{{\mathcal{F}}}
\newcommand{\EE}{{\mathcal{E}}}

\def\sB{\mathscr{B}}

\def\sF{\mathscr{F}}
\def\dd{\mathrm{d}}
\def\bb{\mathrm{b}}
\def\pp{\mathrm{p}}
\def\bP{\mathbb{P}}
\def\bR{\mathbb{R}}
\def\re{\mathrm{e}}
\def\ee{\mathrm{e}}
\def\bE{\mathbb{E}}
\def\sS{\mathscr{S}}
\def\sG{\mathscr{G}}
\def\cD{\mathcal{D}}
\def\sL{\mathscr{L}}
\def\sN{\mathscr{N}}
\def\bo{\mathbf{o}}

\usepackage{lipsum}

\newcommand\blfootnote[1]{%
  \begingroup
  \renewcommand\thefootnote{}\footnote{#1}%
  \addtocounter{footnote}{-1}%
  \endgroup
}

\title{\bf On hitting time distributions of Markov processes with sub-Gaussian heat kernel bounds}
\date{\today}
\author{{\bf Liping Li\footnote{The first named author is a member of LMNS,  Fudan University.  He is partially supported by NSFC (No. 12371144).}, \, Shuwen Lou}}

\begin{document}

\maketitle
\begin{abstract}
\noindent 
In this paper, we study the hitting times of Borel right processes on a metric measure space $(E,d,\mu)$ whose heat kernels satisfy sub-Gaussian bounds. It is well known that if $X=(X_t)_{t\geq 0}$ is diffusion process without killing whose heat kernel satisfies a sub-Gaussian upper bound, then, under the volume growth condition $\mu(B(x,r))\asymp r^\alpha$, it satisfies
\[
\IP^x[\tau_{B(x,r)}\le t]\le C_1\exp\left\{-C_2(r^\beta/t)^{1/(\beta -1)}\right\},
\]
where $B(x,r):=\{y\in E: d(y,x)<r\}$, $\tau_{B(x,r)}:=\inf\{t>0:X_t\notin B(x,r)\}$, and $\beta$ is the walk dimension appearing in the sub-Gaussian heat kernel estimate. We extend this result to general Borel right processes, showing that under an upper bound condition on the volume growth,
\[
\IP^x[\sigma_B\le t]\le C_3\exp \left\{-C_4\left(\frac{\widetilde d(x, B)^\beta}{t}\right)^{1/(\beta -1)}\right\},
\]
where $B$ is a nearly Borel set,  $\sigma_B$ denotes the first hitting time of $B$, and $\widetilde d(x,B)$ represents the distance from $x$ to $B$ after removing the influence of polar subsets of $B$. Furthermore, we show that for a Borel right process with a sub-Gaussian heat kernel lower bound, the hitting time distribution satisfies the corresponding lower bound
\[
\IP^x[\sigma_B\le t]\ge C_5 \exp\left\{-C_6\cdot \left(\frac{\widetilde d(x,B)^\beta}{t}\right)^{1 /(\beta -1)}\right\}.
\]
We also characterize the relationship between the constants $C_i$, $3\leq i\leq 6$, and the constants appearing in the exponents of the corresponding heat kernel bounds. As an application of these hitting time estimates, we further study the small-time asymptotic behavior of $\IP^x[\sigma_B\leq t]$ as $t\downarrow 0$.
\end{abstract}
\noindent  

\blfootnote{AMS 2010 Mathematics Subject Classification: Primary 31C25, 35K08; Secondary 60J35, 60J45.}


{\bf Keywords and phrases}: Heat kernel estimates, Sub-Gaussian bound, First hitting times, Hitting probabilities, Hitting time distributions,  Borel right processes, Dirichlet forms, semi-Dirichlet forms

\tableofcontents

\section{Introduction}

Heat kernel estimates play a fundamental role in the study of Markov processes and the analysis of their associated transition functions. Among them, Gaussian and sub-Gaussian heat kernel estimates have attracted extensive attention due to their close connections with the geometry of the underlying state spaces and the behavior of sample paths. The classic Gaussian estimates arise naturally for Brownian motions and uniformly elliptic diffusions on Euclidean domains, and have been extended to a wide range of settings, including diffusion processes on Riemannian manifolds and more general metric measure spaces. Beyond the Gaussian setting, sub-Gaussian estimates provide a natural framework for anomalous processes, where the scaling behavior is governed by a walk dimension different from the Euclidean one. Important examples include Brownian motions on fractals, such as the Sierpinski gasket and Sierpinski carpet, and symmetric diffusions associated with strongly local regular Dirichlet forms on metric measure spaces. We refer to Section~\ref{SEC71} for a more detailed discussion of processes satisfying such heat kernel estimates.

The relationship between heat kernel upper bounds and exit time estimates has been extensively studied. For diffusion processes on metric measure spaces, sub-Gaussian heat kernel upper bounds are known to imply sharp tail estimates for exit times. In particular, under suitable volume growth conditions, one has
\[
\IP^x\left[\tau_{B(x,r)}\le t\right] \lesssim \exp\left\{-C\left(\frac{r}{t^{1/\beta}}\right)^{\beta /(\beta -1)}\right\},\quad \text{for all }t>0, \,r>0.
\]
Such estimates, together with related characterizations involving heat kernel bounds and mean exit times, were established for diffusions on fractals and more general metric measure spaces; see, for example, \cite{Barlow1,Gr1,Kigami,Telcs} and the references therein. Similar characterizations for jump processes were obtained in \cite{BGK}.

In contrast to the upper bound estimates, obtaining lower bounds for hitting times from heat kernel estimates is substantially more delicate. The main difficulty lies in the fact that, intuitively,  a lower bound of the transition density only describes the probability of reaching a neighborhood of the target set at a fixed time, while a lower bound for the hitting probability requires controlling the event that the process actually visits the set before that time.

For Brownian motions on Riemannian manifolds, Grigor'yan, Saloff-Coste, and collaborators developed a method connecting hitting probabilities with heat kernels through capacity estimates. In particular, for a Brownian motion on a non-parabolic Riemannian manifold, it was shown in \cite{GSC} that, for a compact set $K$ and $x\notin K$, the distribution of the hitting time of $K$ has the following lower bound:
\[
\IP^x[\sigma_K\le t]\ge \mathrm{Cap}(K)\int_0^t \inf_{y\in \partial K} p(s,x,y)\dd s.
\]
 For parabolic manifolds, the capacity term was replaced by the conductivity between $\partial K$ and a set disjoint from $K$. This capacity-based approach was further used in \cite{BBM} to obtain sharp lower estimates for the hitting probabilities of Brownian motion in Euclidean spaces.
 More precisely, let $K\subset\mathbb{R}^d$ be a compact set of positive capacity that does not contain the origin $\mathbf{o}$, and let $a$ be a regular point of $K$. Then, for any sufficiently small $\delta>0$,
\begin{equation*}
\IP^\mathbf{o}[\sigma_K \le t]\gtrsim \ee^{-(|a|+\delta)^2/(2t)}, \quad t\in (0, T];
\end{equation*}
see \cite[Lemma 3.1]{BBM}. 
In particular, the exponential constant $1/2$ coincides with that appearing in the Gaussian heat kernel estimate.

The capacity-based arguments in \cite{GSC} provide an important inspiration for the lower bound estimates developed in this paper.

Motivated by these developments, it is natural to ask whether the connection between heat kernel estimates and hitting time distributions can be extended to a more general setting, including general Markov processes and arbitrary target sets. The present work addresses this question for Borel right processes on metric measure spaces whose transition densities satisfy sub-Gaussian estimates.

Under a sub-Gaussian heat kernel upper bound together with a volume growth condition, we establish a general upper bound estimate for the hitting time distribution; see Theorem~\ref{THM:5.1}. Compared with many existing results, which are typically restricted to metric balls or target sets with additional regularity, our estimate applies to arbitrary nearly Borel sets. Furthermore, the result is obtained in the general framework of Borel right processes, without assuming any symmetry or Dirichlet form structure. A further distinctive feature is that the estimate is expressed in terms of the modified distance $\widetilde{d}(x,B)$, rather than the usual distance $d(x,B)$, where $\widetilde{d}(x,B)$ is obtained by disregarding the polar subsets of $B$; see \eqref{eq:63}. This reflects the fundamental fact that polar subsets do not influence hitting probabilities.

The corresponding lower bound estimate of hitting time distribution under a sub-Gaussian heat kernel lower bound is substantially more involved. Unlike the upper bound estimate, where the heat kernel decay can be directly combined with volume growth arguments, a lower bound requires identifying a subset of the target set that contributes effectively to the hitting probability. The main idea is to locate, near the effective boundary of $B$, an arbitrarily small neighborhood with strictly positive capacity. The size of this neighborhood is controlled by a parameter $\delta>0$, while its location is determined by the distance after disregarding polar subsets of $B$. This modified distance, rather than the usual metric distance, plays a fundamental role in the lower bound estimate; see Lemma~\ref{LM25}. Inspired by the capacity method developed in \cite{GSC}, this approach leads to lower bound estimates under three general frameworks: the symmetric setting, the duality setting, and the sector condition setting associated with semi-Dirichlet forms; see Theorems~\ref{THM3.2} and \ref{THM:4.2}, together with their refinements in Section~\ref{SEC5}. 

As an application of the hitting time estimates established above, Section~\ref{SEC6} is devoted to the small-time asymptotic behavior of $\mathbb{P}^x(\sigma_B\leq t)$ as $t\downarrow 0$. This should be contrasted with the long-time behavior of exit and hitting probabilities, which is typically governed by spectral quantities. For instance, long-time asymptotics are closely related to the bottom of the Dirichlet spectrum for Brownian motion and more general Markov processes; see, e.g., \cite{MW}. By contrast, the small-time asymptotics are determined by the geometry of the starting point relative to the target set. In our setting, the relevant geometric quantity is the modified distance $\widetilde d(x,B)$, obtained by disregarding the polar subsets of $B$.

More generally, Section~\ref{SEC6} shows that Varadhan-type asymptotic results can be derived from the hitting time estimates established in this paper. As a representative example, suppose that the transition density satisfies the two-sided sub-Gaussian estimate
\[
    p(t,y,z)\asymp \frac{1}{t^{\alpha/\beta}}\exp \left\{-C_2\left(\frac{d(y,z)}{t^{1/\beta}}\right)^{\beta/(\beta-1)} \right\},\quad 0<t\leq T,\ y,z\in E,
\]
with the same constant $C_2$ in both the upper and lower bounds. Then
\[
 \lim_{t\downarrow 0}\frac{-t^{1/(\beta-1)}\cdot \left(\log \bP^x[\sigma_B\leq t]\right)}{C_2}= \widetilde d(x,B)^{\beta/(\beta-1)}.
 \]
Section~\ref{Sec-applications} presents several classes of examples for which the assumptions leading to this asymptotic relation are satisfied.

The remainder of the paper is organized as follows. Section~\ref{APPA} collects some preliminary results and introduces the basic setting, including the existence of transition densities and the notation for hitting time distributions. Section~\ref{S:ETUB} is devoted to the upper bound estimates of hitting time distributions derived from sub-Gaussian heat kernel upper bounds. In Section~\ref{SEC4}, lower bound estimates are established under the symmetry framework. Section~\ref{SEC5} extends these results to more general settings based on duality assumptions and the sector condition. The small-time asymptotic behavior of hitting time distributions is investigated in Section~\ref{SEC6}. Finally, Section~\ref{Sec-applications} presents several examples illustrating the applicability of the obtained results.

\subsection*{Notations} 
Let $S$ be a topological space equipped with the Borel $\sigma$-algebra $\sB(S)$. The notations $\bb \sB(S)$ and $\pp \sB(S)$ denote the classes of all bounded and nonnegative Borel measurable functions on $S$, respectively. (Similarly, for any other class of functions $\mathscr C$, we write $\bb \mathscr C$ and $\pp \mathscr C$ for the subclasses of bounded and nonnegative functions in $\mathscr C$, respectively.) For any Borel measure $\nu$ on $S$, let $\mathrm{supp}[\nu]$ denote the support of $\nu$, that is, the smallest closed set $F$ such that $\nu(F^c)=0$. Let $\mathscr{P}(S)$ denote the family of all probability measures on $(S,\sB(S))$. The completion of $\sB(S)$ with respect to a measure $\nu\in \mathscr{P}(S)$ is denoted by $\sB^\nu(S)$. Define
\[
\sB^*(S):=\bigcap_{\nu\in \mathscr{P}(S)}\sB^\nu(S),
\]
which is called the \emph{universally measurable $\sigma$-algebra} on $S$.

For two non-negative functions $f$ and $g$ on $S$, we write $f\lesssim g$ if there exists a constant $C>0$, independent of $x\in S$, such that
\[
    f(x)\leq Cg(x),\qquad x\in S.
\]
The notation $f\gtrsim g$ means that $g\lesssim f$, and $f\asymp g$ means that both $f\lesssim g$ and $f\gtrsim g$ hold. In these notations, the precise values of the multiplicative constants are not essential.

 Let $E$ be a separable metrizable space equipped with the metric $d$. Note that
\[
\begin{aligned}
\sB((0,\infty)\times E)&=\sB((0,\infty))\times \sB(E),\\
\sB((0,\infty)\times E\times E)&=\sB((0,\infty))\times \sB(E)\times \sB(E),
\end{aligned}
\]
where the right-hand sides denote product $\sigma$-algebras; see, for example, \cite[Proposition 8.1.7]{C13}. For any nonempty subset $B\subset E$, let $\overline{B}$ denote the closure of $B$, namely, the smallest closed set containing $B$, and let $\mathrm{int}(B)$ denote the interior of $B$, namely, the largest open set contained in $B$. Define $\partial B:=\overline{B}\setminus \mathrm{int}(B)\in \sB(E)$. 
For $x\in E$, define
$d(x,B):=\inf\{d(x,y):y\in B\}$.
The set $B$ is said to be $d$-bounded if there exist $o\in E$ and $r>0$ such that $d(x,o)<r$ for every $x\in B$.

\section{Preliminaries}\label{APPA}

Throughout this paper, let $E$ be a Lusin topological space equipped with the Borel $\sigma$-algebra $\sB(E)=\sigma(\bb C(E))$, and let $\mu$ be a fixed $\sigma$-finite Borel measure on $E$ with full support. Let
\begin{equation}\label{eq:21-2}
X=(\Omega,\sF,\sF_t,X_t,\theta_t,\bP^x)
\end{equation}
be a Borel right process on $E$ with lifetime $\zeta$, and denote its transition function by $(P_t)_{t\geq 0}$. The standard terminology and notation used here can be found in \cite[Appendix A2]{CF}.

\subsection{Transition density function}

We assume that $X$ satisfies the \emph{absolute continuity condition} with respect to $\mu$:
\begin{itemize}
\item[(AC)] $P_t(x,\cdot)$ is absolutely continuous with respect to $\mu$ for every $t>0$ and $x\in E$.
\end{itemize}
Since $(t,x)\mapsto P_tf(x)$ is $\sB\left((0,\infty)\times E\right)$-measurable for every $f\in \bb\sB(E)$ (see, e.g., \cite[(3.14)]{S88}), it follows from \cite[Lemma A3.2]{S88} that there exists a function
\begin{equation}\label{eq:11}
p(t,x,y),\quad t>0,x,y\in E
\end{equation}
in $\pp\sB\left((0,\infty)\times E\times E\right)$ such that
\begin{equation}\label{eq:13}
P_tf(x)=\int_E p(t,x,y)f(y)\mu(\dd y),\quad \forall f\in \bb\sB(E).
\end{equation}
The function in \eqref{eq:11} is referred to as the \emph{transition density function} (or \emph{heat kernel}) of $X$. Moreover, for any $\lambda\geq 0$,
\begin{equation}\label{potential-kernel}
u_\lambda(x,y):=\int_0^\infty \re^{-\lambda t} p(t,x,y)\dd t,\quad x,y\in E
\end{equation}
is a $\sB(E\times E)$-measurable nonnegative function (taking values in $[0,\infty]$ if $\lambda=0$). In particular, $u(x,y):=u_0(x,y)$ is called the \emph{potential kernel} of $X$.
 
We prepare the following lemma for later use. Let $\sigma$ be an $\sF_t$-stopping time. For $t>0$ and $x,y\in E$, define
\begin{equation}\label{eq:B3}
p^+_\sigma(t,x,y):=\bE^x\left[p(t-\sigma,X_\sigma,y);t>\sigma \right]. 
\end{equation}
This function gives the transition density of $X$ after the stopping time $\sigma$, as shown below.

\begin{lem}\label{PROB1}
$p^+_\sigma\in \pp \sB((0,\infty))\times \sB^*(E)\times \sB(E)$ satisfies
\begin{equation}\label{eq:B4}
    \bE^x \left[f(X_t);t>\sigma \right]=\int_E p^+_\sigma(t,x,y)f(y)\mu(\dd y),\quad \forall t>0,x\in E, f\in \bb \sB(E). 
\end{equation}
\end{lem}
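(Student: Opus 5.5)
The proof has two parts: measurability of $p^+_\sigma$, then the identity \eqref{eq:B4}. Both rest on the strong Markov property of the Borel right process $X$ at $\sigma$, together with the joint measurability of $p$ in $\sB((0,\infty))\times\sB(E)\times\sB(E)$.

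\emph{Measurability.} Consider the map $\Phi:(t,\omega,y)\mapsto p(t-\sigma(\omega),X_\sigma(\omega),y)1_{\{t>\sigma(\omega)\}}$. Since $\sigma$ is $\sF$-measurable and $X_\sigma 1_{\{\sigma<\infty\}}$ is $\sF$-measurable (by right continuity of paths and progressive measurability of $X$), the maps $(t,\omega)\mapsto (t-\sigma(\omega), X_\sigma(\omega))$ on $\{t>\sigma\}$ are $\sB((0,\infty))\times\sF$-measurable. Composing with the jointly Borel function $p$ shows that $\Phi$ is $\sB((0,\infty))\times\sF\times\sB(E)$-measurable and nonnegative.

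Next, recall the standard fact for right processes that $x\mapsto \bE^x[Z]$ is $\sB^*(E)$-measurable for every $Z\in\pp\sF$. Its parametrized version says that $(t,x,y)\mapsto\bE^x[\Phi(t,\cdot,y)]$ is $\sB((0,\infty))\times\sB^*(E)\times\sB(E)$-measurable. I would prove this by a monotone class argument: start from product functions $g(t)Z(\omega)h(y)$, and check that the class of $\Phi$ for which the claim holds is closed under bounded monotone limits. This yields $p^+_\sigma\in\pp\sB((0,\infty))\times\sB^*(E)\times\sB(E)$.

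\emph{Identity.} Fix $t>0$, $x\in E$ and $f\in\bb\sB(E)$; by linearity it suffices to take $f\ge 0$. On $\{t>\sigma\}$ we have $X_t=X_{(t-\sigma)}\circ\theta_\sigma$ (with the convention that $f(\Delta)=0$). The strong Markov property at $\sigma$, in its form with an $\sF_\sigma$-measurable time shift, gives
\[
\bE^x\left[f(X_t);t>\sigma\right]=\bE^x\left[P_{t-\sigma}f(X_\sigma);t>\sigma\right].
\]
To justify this, approximate $\sigma$ by the dyadic stopping times $\sigma_n\downarrow\sigma$. Apply the ordinary strong Markov property at each $\sigma_n$, and pass to the limit using right continuity of $s\mapsto P_sf(z)$ for $f\in \bb C(E)$. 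Then extend from $\bb C(E)$ to $\bb\sB(E)$ by a monotone class argument, which is valid because $\sB(E)=\sigma(\bb C(E))$.

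Finally, substitute \eqref{eq:13}, that is, $P_{t-\sigma}f(X_\sigma)=\int_E p(t-\sigma,X_\sigma,y)f(y)\mu(\dd y)$. Since the integrand is nonnegative and jointly measurable in $(\omega,y)$ by the measurability step, Tonelli's theorem lets us interchange $\bE^x$ and $\int\mu(\dd y)$, which gives \eqref{eq:B4}.

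The main obstacle is the strong Markov step with the random time $t-\sigma$, i.e., justifying the limit $\sigma_n\downarrow\sigma$ for a general right process. I would first try citing the known strong Markov property for right processes with $\sF_\sigma$-measurable shifts, which holds for right continuous $s\mapsto P_sf(X_\sigma)$ (see the appendix of \cite{CF} or \cite{S88}), and fall back on the dyadic approximation only if that citation is unavailable.
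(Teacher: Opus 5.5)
Your proof is correct, and for the identity \eqref{eq:B4} it matches the paper's proof. The paper applies the strong Markov property at $\sigma$ to $\varphi(s,\omega)=f(X_{t-s}(\omega))\mathbf{1}_{\{s<t\}}$, for which $\varphi\circ\Theta_\sigma=f(X_t)\mathbf{1}_{\{\sigma<t\}}$, arrives at $\bE^x[P_{t-\sigma}f(X_\sigma);t>\sigma]$, and then inserts \eqref{eq:13}.

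The genuine difference is in the measurability step. The paper runs the monotone class argument over test functions $h(t,x,y)$ on $(0,\infty)\times E\times E$, starting from $h_1(t)h_2(x)h_3(y)$ with $h_i$ bounded continuous. Since $t$ then still sits inside the expectation, it must show that $(t,x)\mapsto\bE^x[h_1(t-\sigma)\mathbf{1}_{\{t>\sigma\}}h_2(X_\sigma)]$ is jointly measurable. It gets this from left continuity in $t$ and universal measurability in $x$, via \cite[(3.13)]{S88}. You instead run the monotone class argument on the path side, over $\Phi(t,\omega,y)$ generated by $g(t)Z(\omega)h(y)$. There the parameters factor out of $\bE^x$, and only the kernel property of $x\mapsto\bP^x$ with respect to $\sB^*(E)$ is used. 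Your route is more elementary: it needs no continuity in $t$ and no appeal to \cite[(3.13)]{S88}, and it works verbatim for any jointly measurable integrand. The paper's route keeps the monotone class on the state space with continuous test functions.

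The obstacle you anticipate in the strong Markov step is not really there. The version with an $\sF_\sigma$-measurable shift follows from the ordinary strong Markov property by one more monotone class argument. Start from $\varphi(s,\omega)=a(s)G(\omega)$, for which $\bE^x[a(\sigma)\,G\circ\theta_\sigma;\sigma<\infty\mid\sF_\sigma]=a(\sigma)\,\bE^{X_\sigma}[G]$ because $a(\sigma)\mathbf{1}_{\{\sigma<\infty\}}$ is $\sF_\sigma$-measurable. No continuity of $s\mapsto P_sf$ enters.

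Your fallback, as written, would not work. Approximating $\sigma$ by $\sigma_n\downarrow\sigma$ produces $\bE^x[P_{t-\sigma_n}f(X_{\sigma_n});\sigma_n<t]$. Passing to the limit then requires $P_{t-\sigma_n}f(X_{\sigma_n})\to P_{t-\sigma}f(X_\sigma)$, where the time argument increases to $t-\sigma$ and the starting point $X_{\sigma_n}$ moves. Right continuity of $s\mapsto P_sf(z)$ at a fixed $z$ handles neither, and $z\mapsto P_sf(z)$ need not be continuous for a right process.

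If you want an approximation argument, keep the strong Markov property at $\sigma$ itself and discretize the $\sF_\sigma$-measurable shift from above, $s_n\downarrow t-\sigma$. Then $\bE^x[f(X_{\sigma+s_n});\sigma<t]=\bE^x[P_{s_n}f(X_\sigma);\sigma<t]$ for $f\in\bb C(E)$. The limit on the left uses only right continuity of paths, and the limit on the right uses only right continuity of $s\mapsto P_sf(z)$ at the fixed point $z=X_\sigma$.
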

\begin{proof}
We first establish the measurability of \eqref{eq:B3}. For $h(t,x,y)=h_1(t)h_2(x)h_3(y)$ with $h_1\in \bb C((0,\infty))$ and $h_2,h_3\in \bb C(E)$, we have
\[
    h(t-\sigma,X_\sigma,y)\mathbf{1}_{\{t>\sigma\}}=h_1(t-\sigma)\mathbf{1}_{\{t>\sigma\}}\cdot h_2(X_\sigma)\cdot h_3(y). 
\] 
Then
\[
H(t,x,y):=\bE^x \left[ h(t-\sigma,X_\sigma,y);t>\sigma\right]=\bE^x\left[h_1(t-\sigma)\mathbf{1}_{\{t>\sigma\}}\cdot h_2(X_\sigma) \right]\cdot h_2(y).
\]
Note that, for fixed $t$,
\[
     x\mapsto \bE^x\left[h_1(t-\sigma)\mathbf{1}_{\{t>\sigma\}}\cdot h_2(X_\sigma)  \right] \in \sB^*(E),
 \]
and, for fixed $x$,
\[
     t\mapsto \bE^x\left[h_1(t-\sigma)\mathbf{1}_{\{t>\sigma\}}\cdot h_2(X_\sigma)  \right]
 \]
is left continuous. Therefore,
\[
(t,x)\mapsto \bE^x\left[h_1(t-\sigma)\mathbf{1}_{\{t>\sigma\}}\cdot h_2(X_\sigma)\right]
\in \sB((0,\infty))\times \sB^*(E);
\]
see, e.g., \cite[(3.13)]{S88}. Consequently, $H$ is $\sB((0,\infty))\times \sB^*(E)\times \sB(E)$-measurable. A standard monotone class argument then yields that, for any
$h\in \bb\sB((0,\infty)\times E\times E)$ or $\pp\sB((0,\infty)\times E\times E)$,
\[
(t,x,y)\mapsto \bE^x\left[h(t-\sigma,X_\sigma,y);t>\sigma\right]
\]
is $\sB((0,\infty))\times \sB^*(E)\times \sB(E)$-measurable. In particular,
\[
p^+_\sigma\in \sB((0,\infty))\times \sB^*(E)\times \sB(E).
\]

The equality \eqref{eq:B4} is a consequence of the strong Markov property of $X$. Indeed, fixing $t>0$ and $x\in E$, we define
\[
    \varphi(s,\omega):=f(X_{t-s}(\omega)),\quad \text{if }0\leq s<t, \omega\in \Omega
\]
and $\varphi(s,\omega):=0$ otherwise. Then
\[
    \varphi\circ \Theta_\sigma (\omega)=\varphi(\sigma(\omega),\theta_{\sigma(\omega)}(\omega))=f(X_{t-\sigma(\omega)}(\theta_{\sigma(\omega)}(\omega)))\cdot \mathbf{1}_{\{\sigma(\omega)<t\}}=f(X_t(\omega))\cdot \mathbf{1}_{\{\sigma(\omega)<t\}}, 
\]
where $\Theta_\sigma(\omega):=(\sigma(\omega), \theta_{\sigma(\omega)}(\omega))$. Applying the strong Markov property of $X$, we obtain
\[
\begin{aligned}
    \bE^x\left[f(X_t);t>\sigma \right]&=\bE^x\left[\bE^x\left[\left(\varphi\circ \Theta_\sigma\right) \cdot \mathbf{1}_{\{\sigma<\infty\}}\big|\sF_\sigma \right]\right]  \\
    &=\bE^x \left[\bE^{z}\left[\varphi(s,\cdot) \right]\big|_{s=\sigma,z=X_\sigma}; \sigma<\infty\right].
\end{aligned}\]
By \eqref{eq:13},
\[
    \bE^z\left[\varphi (s,\cdot)\right]=\bE^z\left[f(X_{t-s}) \right]\cdot \mathbf{1}_{\{s<t\}}=\mathbf{1}_{\{s<t\}}\cdot \int_E p(t-s,z,y)f(y)\mu(\dd y). 
\]
It follows that
\[
    \bE^x\left[f(X_t);t>\sigma \right]=\bE^x\left[ \int_E p(t-\sigma,X_\sigma,y)f(y)\mu(\dd y); t>\sigma\right]=\int_E p^+_\sigma(t,x,y)f(y)\mu(\dd y).
\]
This completes the proof. 
\end{proof}

The result above immediately yields the following fact.

 \begin{corollary}
Let $p^-_\sigma(t,x,y):=p(t,x,y)-p^+_\sigma(t,x,y)$ for $t>0$ and $x,y\in E$. Then $p^-_\sigma(\cdot, \cdot,\cdot)\in \sB((0,\infty))\times \sB^*(E)\times \sB(E)$ satisfies 
    \[
        \bE^x\left[f(X_t);t\leq \sigma \right]=\int_E p^-_\sigma(t,x,y)f(y)\mu(\dd y),\quad \forall t>0,x\in E, f\in \bb\sB(E). 
    \]
In particular, 
\[
    p(t,x,y)=p^-_\sigma(t,x,y)+p^+_\sigma(t,x,y),\quad \forall t>0, x,y\in E 
\]
provides a decomposition of transition density function into the contributions before and after $\sigma$.
 \end{corollary}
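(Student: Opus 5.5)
The corollary follows from Lemma~\ref{PROB1} together with \eqref{eq:13}. I would establish the three claims in order: measurability, the integral representation, and the decomposition.

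\textbf{Measurability.} By Lemma~\ref{PROB1}, $p^+_\sigma$ is $\sB((0,\infty))\times\sB^*(E)\times\sB(E)$-measurable. Since $\sB(E)\subset\sB^*(E)$, the kernel $p$ is measurable with respect to the same product $\sigma$-algebra. Hence so is the difference $p^-_\sigma = p - p^+_\sigma$.

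There is one point to check: the difference must not produce $\infty-\infty$. Taking $f=1$ in \eqref{eq:B4} and in \eqref{eq:13} gives
\[
\int_E p^+_\sigma(t,x,y)\,\mu(\dd y)=\bP^x[X_t\in E,\,t>\sigma]\le\int_E p(t,x,y)\,\mu(\dd y)\le 1,
\]
so both kernels are $\mu$-a.e.\ finite in $y$. More importantly, $p^+_\sigma\le p$ holds pointwise only up to the choice of versions. To see that the comparison holds at least $\mu$-a.e., take $f=\mathbf 1_A$ in both formulas: this yields $\int_A p^+_\sigma\,\dd\mu\le\int_A p\,\dd\mu$ for every Borel set $A$, hence $p^+_\sigma(t,x,\cdot)\le p(t,x,\cdot)$ $\mu$-a.e. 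The statement does not claim that $p^-_\sigma$ is nonnegative, so a.e.\ comparison is all that is needed. If $p$ takes the value $\infty$ somewhere, one adopts the convention of setting $p^-_\sigma:=0$ on the set where both kernels are infinite; this set is Borel in the product $\sigma$-algebra and $\mu$-null in $y$.

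\textbf{Integral formula.} Fix $t>0$, $x\in E$ and $f\in\bb\sB(E)$. Write
\[
\bE^x[f(X_t);t\le\sigma]=\bE^x[f(X_t)]-\bE^x[f(X_t);t>\sigma],
\]
which is legitimate because $f$ is bounded and, with the usual convention $f(\Delta)=0$ at the cemetery, both expectations are finite. The first term equals $\int_E p(t,x,y)f(y)\,\mu(\dd y)$ by \eqref{eq:13}, and the second equals $\int_E p^+_\sigma(t,x,y)f(y)\,\mu(\dd y)$ by \eqref{eq:B4}. Both integrals converge absolutely, since $|f|\le\|f\|_\infty$ and each kernel has total mass at most $1$. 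They can therefore be combined by linearity into $\int_E p^-_\sigma(t,x,y)f(y)\,\mu(\dd y)$.

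\textbf{Decomposition.} The identity $p=p^-_\sigma+p^+_\sigma$ is simply the definition of $p^-_\sigma$ rearranged, and it holds wherever the kernels are finite.

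The only step requiring any care is the finiteness issue in the measurability part, that is, making sure the difference $p-p^+_\sigma$ is well defined. Everything else is linearity applied to the two representations \eqref{eq:13} and \eqref{eq:B4}.
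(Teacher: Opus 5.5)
Your proposal is correct and follows the paper, which states the corollary as an immediate consequence of Lemma~\ref{PROB1} and \eqref{eq:13}: subtract \eqref{eq:B4} from \eqref{eq:13}, using that $\{t\le\sigma\}$ and $\{t>\sigma\}$ partition $\Omega$, and note that measurability carries over from $p$ and $p^+_\sigma$. Your finiteness check (each kernel has total $y$-mass at most $1$, so the difference is $\mu$-a.e.\ finite and the two integrals can be combined) fills in a detail the paper leaves implicit.
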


 \subsection{First hitting times of nearly Borel sets}

A set $B\subset E$ is called \emph{nearly Borel measurable} (with respect to $X$) if, for each $\nu \in \mathscr{P}(E)$, there exist $B_1,B_2\in \sB(E)$ such that $B_1\subset B\subset B_2$ and  $\bP^\nu(X_t\in B_2\setminus B_1,\exists t\geq 0)=0$.
We write $\mathscr{B}^n(E)$ for the family of all nearly Borel measurable subsets of $E$, which forms a $\sigma$-algebra. It is clear that $\sB(E)\subset \sB^n(E)\subset \sB^*(E)$.  
For $B\in \sB^n(E)$, define the \emph{first hitting time} of $B$ by
\[
    \sigma_B:=\inf\{t>0:X_t\in B\}
\]
with the convention $\inf \emptyset := \infty$. We also define the \emph{first exit time} from $B$ by $\tau_B:=\inf\{t>0:X_t\notin B\}$. Note that both $\sigma_B$ and $\tau_B$ are $\sF_t$-stopping times for every $B\in \sB^n(E)$. 

Assume that $E$ is metrizable, and that its topology is induced by the metric $d$. Our main goal is to derive estimates for the hitting time distribution $\bP^x[\sigma_B\leq t]$ from sub-Gaussian estimates of the transition density function.
A key quantity in this analysis is the distance between the starting point $x$ and the target set $B$. The usual choice is the metric distance
\[
d(x,B):=\inf\{d(x,y):y\in B\}.
\]
However, for a general Borel right process, this distance may not be the optimal quantity for describing the small-time behavior of the hitting probability. Indeed, the hitting time distribution is insensitive to the modification of $B$ by polar sets, since such sets cannot be reached by the process. Therefore, removing polar subsets from $B$ does not affect $\sigma_B$, but may increase the distance between $x$ and the remaining part of $B$. To obtain sharper estimates, it is natural to replace the usual distance by a modified distance that ignores polar parts of the target set.

Let $\sN$ denote the family of all nearly Borel measurable polar sets, namely,
\[
\sN:=\{N\in\sB^n(E):\bP^x[\sigma_N<\infty]=0,\ \forall x\in E\}.
\]
For $x\in E$ and $B\in\sB^n(E)$, define
\begin{equation}\label{eq:63}
    \widetilde{d}(x,B):=\sup_{N\in\sN}d(x,B\setminus N).
\end{equation}
The lemma below gives the finiteness of $\widetilde{d}(x,B)$.

\begin{lem}
If $B\in \sB^n(E)$ is not polar, then $\widetilde{d}(x,B)<\infty$ for any $x\in E$. 
\end{lem}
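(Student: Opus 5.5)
We argue by contradiction, assuming that $\widetilde d(x,B)=\infty$ for some $x\in E$. Unwinding the definition \eqref{eq:63}, this means there is a sequence $N_k\in\sN$ with $d(x,B\setminus N_k)\ge k$ for every $k\ge 1$. Put
\[
N:=\bigcup_{k\ge1}N_k .
\]
Since $\sB^n(E)$ is a $\sigma$-algebra, $N\in\sB^n(E)$. Moreover $\{\sigma_N<\infty\}=\bigcup_k\{\sigma_{N_k}<\infty\}$, so $\bP^y[\sigma_N<\infty]\le\sum_k\bP^y[\sigma_{N_k}<\infty]=0$ for every $y\in E$, and hence $N\in\sN$. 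On the other hand $B\setminus N\subset B\setminus N_k$ for each $k$, so $d(x,B\setminus N)\ge k$ for all $k$. This forces $d(x,B\setminus N)=\infty$, i.e.\ $B\setminus N=\emptyset$ under the convention $\inf\emptyset=\infty$.

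The key point here is that $d(x,\cdot)$ is finite on every nonempty set, because $d$ is a genuine (finite-valued) metric. So $d(x,B\setminus N)=\infty$ can only happen if $B\setminus N$ is empty. It then follows that $B\subset N$, which gives $\sigma_B\ge\sigma_N$ and therefore $\bP^y[\sigma_B<\infty]\le \bP^y[\sigma_N<\infty]=0$ for all $y$. Thus $B$ is polar, contradicting the hypothesis.

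The only step needing care is that the countable union of polar sets is again polar (and nearly Borel); this follows from countable subadditivity as shown above. Note also that the supremum in \eqref{eq:63} is over all $N\in\sN$, but a sequence realizing it suffices, so no uncountable unions arise.
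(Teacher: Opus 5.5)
Your proof is correct and is essentially the paper's argument, since both rest on countable subadditivity: a countable union of polar sets is polar. The paper applies this to the slices $B\cap\{y:d(y,x)\le n\}$, each contained in some $N\in\sN$, whereas you apply it directly to the exceptional sets $N_k$ and conclude that $B\subset\bigcup_k N_k$.
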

\begin{proof}
  Suppose that $\widetilde{d}(x,B)=\infty$. Then, for any $n\geq 1$, there exists $N\in \sN$ such that $d(x,B\setminus N)>n$. Consequently, $B_n:=B\cap \{y\in E:d(y,x)\leq n\}$ is polar for any $n\geq 1$.  This readily indicates that $B=\bigcup_{n\geq 1}B_n$ is polar. 
\end{proof}

Clearly, $d(x,B)\leq \widetilde{d}(x,B)$, while the equality does not hold in general. In the sequel, we shall provide several sufficient conditions ensuring that $d(x,B)=\widetilde{d}(x,B)$.

A set $G\subset E$ is said to be \emph{finely open} if, for each $x\in G$, there is  a set $D(x)\in \sB^n(E)$ with $G^c\subset D(x)$ such that $\bP^x[\sigma_{D(x)}>0]=1$. The collection of all finely open subsets of $E$ forms a new topology on $E$, called the \emph{fine topology} (associated with $X$). Clearly, every open set in the original topology of $E$ is finely open, and every nonempty finely open set is non-polar. 
Recall that $\mathrm{int}(B)$ denotes the interior of $B$ with respect to the original topology of $E$. 
We use $\mathrm{int}^f(B)$ to denote the {\it fine interior} of $B$ with respect to $X$, i.e., the largest finely open set  contained in $B$.

\begin{lem}\label{LM24}
 Let $x\in E$ and $B\in \sB^n(E)$. Then the following statements hold:
\begin{itemize}
    \item[(1)] If $B$ is finely open (in particular, if $B$ is open), then $d(x,B)=\widetilde{d}(x,B)$. 
    \item[(2)] If $B\subset \overline{\mathrm{int}^f(B)}$, then $d(x,B)=\widetilde{d}(x,B)$. In particular, if $B$ is the closure of an open subset of $E$, then $d(x,B)=\widetilde{d}(x,B)$. 
\end{itemize}
\end{lem}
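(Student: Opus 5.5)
Since $d(x,B)\le \widetilde d(x,B)$ always holds, it suffices in both parts to prove the reverse inequality. That is, for every $N\in\sN$ we must show $d(x,B\setminus N)\le d(x,B)$. The key observation is that a polar set cannot contain a nonempty finely open set, because every nonempty finely open set is non-polar. So the plan is to show that every point of $B$ (or of a set dense in $B$) can be approximated by points of $B\setminus N$.

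For (1), let $B$ be finely open and $N\in\sN$. Fix $y\in B$ and $\varepsilon>0$, and let $U:=B(y,\varepsilon)=\{z:d(z,y)<\varepsilon\}$, which is open and hence finely open. Since the fine topology is finer than the original topology, $U\cap B$ is finely open, and it is nonempty because it contains $y$. Hence $U\cap B$ is non-polar. Any subset of a polar set is polar, and $U\cap B$ is nearly Borel because $U$ is open and $B\in\sB^n(E)$. It follows that $U\cap B\not\subset N$, so there exists $z\in (B\setminus N)\cap U$. Then
\[
d(x,B\setminus N)\le d(x,z)\le d(x,y)+\varepsilon .
\]
Taking the infimum over $y\in B$ and letting $\varepsilon\downarrow0$ gives $d(x,B\setminus N)\le d(x,B)$. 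Taking the supremum over $N\in\sN$ yields $\widetilde d(x,B)\le d(x,B)$.

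For (2), set $G:=\mathrm{int}^f(B)$, which is finely open. Its nearly Borel measurability needs care. I would instead avoid it: the argument in (1) only used that $U\cap G$ is a nonempty finely open subset of $B$ and therefore cannot be contained in the polar set $N\cap B$. That requires only that every subset of a polar nearly Borel set is non-reachable, which follows from monotonicity of hitting times ($\sigma_A\ge\sigma_N$ for $A\subset N$). This is the main technical point, and it depends on the definition of ``non-polar'' being interpreted via this monotonicity.

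Given that, for $y\in B\subset\overline{G}$ and $\varepsilon>0$, there is $y'\in G$ with $d(y,y')<\varepsilon$. Applying the argument of (1) at $y'$ inside $G$ produces $z\in G\setminus N\subset B\setminus N$ with $d(z,y')<\varepsilon$. Hence $d(x,B\setminus N)\le d(x,y)+2\varepsilon$, and we conclude as before.

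For the ``in particular'' statement, let $B=\overline{O}$ with $O$ open. Then $O\subset\mathrm{int}(B)\subset \mathrm{int}^f(B)$, so $B=\overline O\subset \overline{\mathrm{int}^f(B)}$, and $B$ is Borel since it is closed. Therefore part (2) applies.
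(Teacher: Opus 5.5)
Your proof is correct and is essentially the paper's argument. Both rest on the fact that a nonempty finely open set cannot lie inside a set $N\in\sN$. The paper phrases (1) as a contradiction via $\{y\in B: d(y,x)<d(x,B)+\varepsilon\}\subset N$, and proves (2) by applying (1) to $\mathrm{int}^f(B)$ and chaining $\widetilde d(x,B)\le \widetilde d(x,\mathrm{int}^f(B))=d(x,\overline{\mathrm{int}^f(B)})\le d(x,B)$. You instead argue directly with small balls around points of $B$ (resp.\ of $\mathrm{int}^f(B)$).

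Your measurability worry in (2) is unnecessary. For $B\in\sB^n(E)$ one has $\mathrm{int}^f(B)=B\setminus (B^c)^r\in\sB^n(E)$. In any case, the fact you need follows directly from the definition of fine openness: a path started in a finely open $W$ stays in $W$ for small positive times, which forces $\sigma_N=0$ if $W\subset N$.
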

\begin{proof}
(1) We argue by contradiction. Suppose that $B$ is nonempty and finely open, but $d(x,B)<\widetilde{d}(x,B)$. Then, for $0<\varepsilon<\widetilde{d}(x,B)-d(x,B)$, there exists $N\in \sN$ such that $d(x,B\setminus N)>d(x,B)+\varepsilon$. Consequently,
\[
    B_\varepsilon:=\{y\in B: d(y,x)<d(x,B)+\varepsilon\}\subset N
\]
is polar. However, $B_\varepsilon$ is nonempty and finely open, since both $B$ and $\{y\in E:d(y,x)<d(x,B)+\varepsilon\}$ are finely open. This contradicts the fact that every nonempty finely open set is non-polar.

(2) Since $\mathrm{int}^f(B)\subset B\subset \overline{\mathrm{int}^f(B)}$ and $d(x,\mathrm{int}^f(B))=\widetilde{d}(x,\mathrm{int}^f(B))$ by the first statement, it follows that 
\[
    \widetilde{d}(x,B)\leq \widetilde{d}(x,\mathrm{int}^f(B))=d(x,\mathrm{int}^f(B))=d(x,\overline{\mathrm{int}^f(B)})\leq d(x,B).
\]
This gives $\widetilde{d}(x,B)=d(x,B)$. 

If $B=\overline{G}$ for some open set $G$, then $G\subset \mathrm{int}(B)$ and hence,
\[
    B=\overline{G}\subset \overline{\mathrm{int}(B)}\subset \overline{\mathrm{int}^f(B)}. 
\]
Therefore, $d(x,B)=\widetilde{d}(x,B)$. 
\end{proof}

The following fact plays a crucial role in establishing the lower bound estimate for $\bP^x[\sigma_B\leq t]$.

\begin{lem}\label{LM25}
If $B\in \sB^n(E)$ is non-polar, then for any $x\in E$ and $\delta>0$, the set
\[
    B_{x,\delta}:=\{y\in B: d(y,x)\leq \widetilde{d}(x,B)+\delta\}
\]
is also non-polar. 
\end{lem}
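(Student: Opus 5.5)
Argue by contradiction, using the definition \eqref{eq:63} of $\widetilde{d}(x,B)$ as a supremum over polar sets, exactly as in the proof of the finiteness lemma above. Fix $x\in E$ and $\delta>0$, and write $r:=\widetilde{d}(x,B)$, which is finite by that lemma since $B$ is non-polar. Suppose $B_{x,\delta}$ is polar.

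The first step is to check that $B_{x,\delta}\in\sN$. It is nearly Borel, being the intersection of $B\in\sB^n(E)$ with the closed ball $\{y:d(y,x)\le r+\delta\}$, which is Borel. By the assumption it is polar, so it is an admissible set $N$ in the supremum defining $\widetilde d(x,B)$.

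The second step is to bound the distance after removing it. We have $B\setminus B_{x,\delta}=\{y\in B: d(y,x)>r+\delta\}$, so every point of this set is at distance greater than $r+\delta$ from $x$. Hence $d(x,B\setminus B_{x,\delta})\ge r+\delta$. If $B\setminus B_{x,\delta}$ were empty, then $B=B_{x,\delta}$ would be polar, contradicting the hypothesis; in any case the infimum over it is at least $r+\delta$.

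It follows that
\[
\widetilde d(x,B)\ \ge\ d(x,B\setminus B_{x,\delta})\ \ge\ r+\delta \;>\; r=\widetilde d(x,B),
\]
which is a contradiction, since $r<\infty$. Therefore $B_{x,\delta}$ is non-polar.

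There is no serious obstacle. The only points needing care are the near-Borel measurability of $B_{x,\delta}$, so that it belongs to $\sN$, and the finiteness of $\widetilde d(x,B)$, which makes the final strict inequality meaningful.
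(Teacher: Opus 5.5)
Your proof is correct and is essentially the paper's argument: assuming $B_{x,\delta}$ is polar, you use it as an admissible $N$ in \eqref{eq:63} to get $\widetilde d(x,B)\ge d(x,B\setminus B_{x,\delta})\ge \widetilde d(x,B)+\delta$, a contradiction. Your explicit checks that $B_{x,\delta}$ is nearly Borel and that $\widetilde d(x,B)<\infty$ (from the preceding lemma) fill in details the paper leaves implicit.
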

\begin{proof}
    If $B_{x,\delta}$ is polar for some $\delta>0$, then $\widetilde{d}(x,B)\geq d(x,B\setminus B_{x,\delta})\geq \widetilde{d}(x,B)+\delta$, which leads to a contradiction.  
\end{proof}

We conclude this section by giving a more explicit characterization of $\widetilde d(x,B)$ under typical settings. For $B\in \sB^n(E)$, let
 \[
     B^r:=\{x\in E: \bP^x[\sigma_B=0]=1\}
 \]
 denote the set of regular points  for $B$. It is well known that $B\setminus B^r$ is \emph{semipolar}.  We further point out that, under either the symmetry assumption considered in Section~\ref{SEC4} or the sector condition framework considered in Section~\ref{SEC52}, together with the absolute continuity condition (AC), every semipolar set is polar. 

\begin{prop}\label{PRO26}
Assume that every semipolar subset of $E$ is polar. Then, for any $x\in E$ and $B\in \sB^n(E)$,
\begin{equation}\label{eq:29}
    \widetilde{d}(x,B)=d(x,B\cap B^r). 
\end{equation}
\end{prop}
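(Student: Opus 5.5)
We prove the two inequalities in \eqref{eq:29} separately.

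For $\widetilde d(x,B)\le d(x,B\cap B^r)$, we take any $N\in\sN$. We claim that $B\cap B^r\cap N=\emptyset$. Indeed, if $y\in B^r\cap N$, then $\bP^y[\sigma_B=0]=1$ but $\bP^y[\sigma_N<\infty]=0$. We cannot conclude a contradiction directly from this, since $y\in B^r$ only says $B$ is hit immediately, not $N$. So we argue more carefully: we want $B\cap B^r\subset \overline{B\setminus N}$, or at least that $d(x,B\setminus N)\le d(x,B\cap B^r)$. Fix $y\in B\cap B^r$ and $\varepsilon>0$, and let $U=\{z:d(z,y)<\varepsilon\}$. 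Since $N$ is polar, $\sigma_{B\setminus N}=\sigma_B$ $\bP^y$-a.s., so $\bP^y[\sigma_{B\setminus N}=0]=1$. By right continuity of paths, $X_t\in U$ for small $t$, hence $(B\setminus N)\cap U\neq\emptyset$. Therefore $d(x,B\setminus N)\le d(x,y)+\varepsilon$. Letting $\varepsilon\downarrow0$, taking the infimum over $y$, and then the supremum over $N$ gives the inequality.

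For $\widetilde d(x,B)\ge d(x,B\cap B^r)$, it suffices to exhibit a single $N\in\sN$ with $B\setminus N=B\cap B^r$, namely $N:=B\setminus B^r$. This set is nearly Borel, because $B^r$ is nearly Borel (indeed finely closed and nearly Borel, by the standard fact that $x\mapsto\bP^x[\sigma_B=0]$ is nearly Borel measurable). It is semipolar, which is the fact quoted just before the proposition. By hypothesis it is therefore polar, so $N\in\sN$ and $\widetilde d(x,B)\ge d(x,B\setminus N)=d(x,B\cap B^r)$. This also covers the degenerate case in which $B\cap B^r=\emptyset$: then $B$ is polar and both sides are $+\infty$ under the convention $d(x,\emptyset)=\infty$.

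The main subtle point is the first inequality, specifically the step ``$\bP^y[\sigma_{B\setminus N}=0]=1$ implies that points of $B\setminus N$ lie arbitrarily close to $y$''. This uses only right continuity at $t=0$ together with $X_0=y$ under $\bP^y$. The second inequality is essentially bookkeeping once the measurability of $B^r$ and the semipolarity of $B\setminus B^r$ are invoked.
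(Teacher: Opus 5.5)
Your proof is correct and is essentially the paper's argument. The ``$\ge$'' half via $N:=B\setminus B^r$ is identical. Your ``$\le$'' half is the paper's contradiction argument (regularity of $y$ plus right continuity prevents a polar set from containing all of $B$ near $y$) run directly, which has the small bonus of showing that $\widetilde d(x,B)\le d(x,B\cap B^r)$ needs no semipolar hypothesis.

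Do delete the opening claim $B\cap B^r\cap N=\emptyset$, even though you never use it: it is false in general. Take planar Brownian motion, $B$ a closed disc, and $N=\{y\}$ with $y$ an interior point of $B$.
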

\begin{proof}
If $B$ is polar, then $B^r$ is empty, and \eqref{eq:29} holds trivially. Hence, we only consider the case where $B$ is not polar. 
    Note that $N_1:=B\setminus B^r$ is semipolar, and hence polar by assumption. Let $B_1:=B\cap B^r=B\setminus N_1$. Since $N_1$ is polar, we have 
    $\widetilde{d}(x,B)\geq d(x,B\setminus N_1)=d(x,B_1)$. 
    Suppose, by contradiction, that
    \[
        \widetilde{d}(x,B)=\sup_{N\in \sN}d(x,B\setminus N)=\sup_{N\in \sN}d(x,B_1\setminus N)> d(x,B_1). 
    \]
    Then, for $0<\delta<\widetilde{d}(x,B)-d(x,B_1)$, there exists $N_\delta\in \sN$ such that
    \begin{equation}\label{eq:28}
        d(x,B_1\setminus N_\delta)\geq d(x,B_1)+\delta. 
    \end{equation}  
    On the other hand, choose 
 $y\in B_1$ such that $d(x,y)\leq d(x,B_1)+\delta/3$. 
 It then follows from \eqref{eq:28} that
    \[
        B(y,\delta/3)\cap B_1\subset B_1\cap N_\delta\subset N_\delta. 
    \]
    Hence, $N_2:=B(y,\delta/3)\cap B_1$ is polar. However, $y\in N_2$ is regular for $B_1$, since
    \[
        \bP^y[\sigma_{B_1}=0]=\bP^y[\sigma_B=0]=1. 
    \]
    By the right continuity of the sample paths of $X$, $y$ is also regular for $N_2$. This contradicts the fact that $N_2$ is polar, and completes the proof of \eqref{eq:29}. 
\end{proof}

\section{Upper bound estimate of hitting time distribution
}\label{S:ETUB}

 This section is devoted to establishing an upper bound estimate for the hitting time distribution under a sub-Gaussian heat kernel upper bound. To this end, an additional volume growth condition \eqref{eq:41} is imposed. The main result is stated as follows.

\begin{thm}\label{THM:5.1}
Let $(E,d)$ be a metric space whose induced topological space is Lusin, and let $\mu$ be a $\sigma$-finite Borel measure on $E$ with full support such that, for some constants $C_0>0$ and $\alpha>0$,
\begin{equation}\label{eq:41}
    \mu(B(x, r))\le C_0 r^\alpha,\quad \forall x\in E, r>0,
\end{equation}
where $B(x,r):=\{y\in E:d(y,x)<r\}$. Consider a Borel right process $X$ on $E$ whose transition function satisfies the absolute continuity condition (AC) with respect to $\mu$. Assume further that its transition density function satisfies the following sub-Gaussian upper bound estimate: 
\begin{equation}\label{subGaussianHKUB}
p(t,x,y)\le \frac{C_1}{t^{\alpha/\beta}}\exp\left\{-C_2\left(\frac{d(x,y)}{t^{1/\beta}}\right)^{\beta/(\beta -1)}\right\}, \quad \forall  t>0, \, x,y\in E,
\end{equation}
for some $\beta>1$ and $C_1,C_2>0$. Then, for any $0<\delta<1$, there exists a constant $C_3>0$, depending only on $C_0, C_1, C_2, \alpha, \beta, \delta$ (not on $x$ or $B$), such that  
\begin{equation}\label{eq:33}
\IP^x\left[\sigma_{B}\le t\right] \le C_3 \exp\left\{-C_2(1-\delta)\left(\frac{\widetilde d(x, B)}{t^{1/\beta}}\right)^{\beta/(\beta -1)}\right\}, \quad \forall t>0, \, x\in E, B\in \sB^n(E),
\end{equation}
where $C_2$ is the same constant as in \eqref{subGaussianHKUB} and $\widetilde{d}(x,B)$ is defined as \eqref{eq:63}. 
\end{thm}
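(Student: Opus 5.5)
Write $r:=\widetilde d(x,B)$. First I reduce to a set that stays away from $x$. If $B$ is polar, $\bP^x[\sigma_B\le t]=0$ and there is nothing to prove. Otherwise $r<\infty$, and we may assume $r>0$, since for $r=0$ the bound holds with any $C_3\ge 1$. Fix $\eta\in(0,1)$, to be chosen small depending on $\delta$. By \eqref{eq:63} there is $N\in\sN$ with $d(x,B\setminus N)>(1-\eta)r$. Since $N$ is polar, $\sigma_B=\sigma_{B\setminus N}$ $\bP^x$-a.s. Thus it suffices to bound $\bP^x[\sigma_{B'}\le t]$, where $B'\subset E\setminus B(x,(1-\eta)r)=:F$. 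Moreover $\{\sigma_{B'}\le t\}\subset\{\tau_{B(x,\rho)}\le t\}$ with $\rho=(1-\eta)r$. So the problem reduces to an exit-time estimate for the ball $B(x,\rho)$, which is exactly the classical statement, now for a general Borel right process and with the sharp constant $C_2(1-\delta)$.

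For the exit estimate I would use the standard splitting argument with the strong Markov property, which Lemma~\ref{PROB1} makes available. Let $\sigma:=\tau_{B(x,\rho)}$. On $\{\sigma\le t\}$, either $X_{2t}$ (or $X_{t+s}$ for a suitable horizon) lies outside $B(x,\rho/2)$-type region, or the process travels back a distance from $X_\sigma\in F$. A cleaner version avoids a factor-2 loss, which is needed because the constant $C_2$ must survive up to $(1-\delta)$. Split $[\rho\text{-annuli}]$: choose $\theta\in(0,1)$ small. On $\{\sigma\le t\}$, with probability bounded below, $X$ does not move far after $\sigma$ within time $t$. Concretely,
\[
\bP^x[X_{t}\notin B(x,(1-\theta)\rho)]\ge \bP^x[\sigma\le t]\cdot\inf_{z\in F,\,s\le t}\bP^z[d(X_{s},z)<\theta\rho]
\]
is not quite right as written, since the infimum must hold at every $s\le t$ and the path must stay in a fixed set. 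I would instead bound $\bP^z[d(X_s,z)\ge\theta\rho]$ from above via \eqref{subGaussianHKUB} and \eqref{eq:41} by integrating over dyadic annuli. This gives $\le c\exp\{-c'(\theta\rho/s^{1/\beta})^{\beta/(\beta-1)}\}$ uniformly in $s\le t$, and this is at most $1/2$ once $\rho^\beta/t$ is large. When $\rho^\beta/t$ is bounded, the claimed inequality is trivial after enlarging $C_3$. Hence $\bP^x[\sigma\le t]\le 2\bP^x[d(X_t,x)\ge(1-\theta)\rho]$, up to replacing $t$ by a horizon between $t$ and $2t$, which I handle by also evaluating at $X_{t'}$ with $t'\in[t,2t]$ and using monotonicity of the heat-kernel bound. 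This is only harmless if the time change is by a factor $1+\theta$: then use $t'=(1+\theta)t$ and the bound after $\sigma$ over times $s\in[\theta t,(1+\theta)t]$.

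Finally, estimate $\bP^x[d(X_{t'},x)\ge(1-\theta)\rho]=\int_{F'}p(t',x,y)\mu(\dd y)$ using \eqref{subGaussianHKUB} on the annuli $\{2^k\le d(x,y)/((1-\theta)\rho)<2^{k+1}\}$, whose volumes are bounded by \eqref{eq:41}. The polynomial factors $(\rho/t^{1/\beta})^\alpha$ are absorbed by sacrificing an $\epsilon$ in the exponent, since $u^\alpha e^{-\epsilon u^{\beta/(\beta-1)}}$ is bounded. The result is
\[
C\exp\{-C_2(1-\epsilon)((1-\theta)\rho)^{\beta/(\beta-1)}/t'^{1/(\beta-1)}\}.
\]
Choosing $\eta,\theta,\epsilon$ small so that $(1-\epsilon)(1-\theta)^{\beta/(\beta-1)}(1-\eta)^{\beta/(\beta-1)}(1+\theta)^{-1/(\beta-1)}\ge 1-\delta$ yields \eqref{eq:33}. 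The constants depend only on $C_0,C_1,C_2,\alpha,\beta,\delta$.

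The main obstacle is the second step: transferring the event $\{\sigma\le t\}$ to a fixed-time event without losing more than a factor $(1-\delta)$ in the exponent. The post-$\sigma$ displacement must be controlled uniformly over the random time $t'-\sigma\in[\theta t,(1+\theta)t]$. That requires the displacement estimate of $\bP^z[d(X_s,z)\ge\theta\rho]$ to hold uniformly in $s\le(1+\theta)t$ and $z$, and the strong Markov property in the form of Lemma~\ref{PROB1}.
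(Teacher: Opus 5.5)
Your proposal is correct, at least under the same implicit hypothesis as the paper's proof (see the last remark). It takes a different route from the paper.

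\textbf{How the two routes differ.} The paper runs a Chernoff-type argument:
\begin{itemize}
\item it applies Lemma~\ref{PROB1} with the test function $\re^{\lambda d(x,\cdot)}$;
\item it bounds the exponential moment $\int_E p(t,x,y)\re^{\lambda d(x,y)}\mu(\dd y)$ from above via \eqref{subGaussianHKUB} and \eqref{eq:41}, splitting $C_2$ into $C_2(1-\delta/2)+C_2\delta/2$ so that the volume sum converges;
\item it bounds $\int_E p(s,z,y)\re^{-\lambda d(z,y)}\mu(\dd y)$ from below by $\frac12\re^{-\lambda\eta s^{1/\beta}}$;
\item it optimizes in $\lambda$.
\end{itemize}
You instead put the indicator of $\{y: d(x,y)>(1-\theta)\rho\}$ into the same strong Markov decomposition. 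This gives $\bP^x[\sigma\le t]\le 2\,\bP^x[d(X_t,x)>(1-\theta)\rho]$ once $\rho^\beta/t$ is large. You then integrate the heat kernel tail over annuli and absorb the factor $u^\alpha$ into an $\epsilon$-loss in the exponent.

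\textbf{What each buys.} Your route is more elementary: it needs no Laplace transform, no optimization over $\lambda$, and no $\widetilde h$ computation. It is essentially the classical exit-time argument. The paper's route keeps the ``stay nearby'' radius at a fixed multiple of $t^{1/\beta}$, with the multiplier depending only on $C_0,C_1,C_2,\alpha,\beta$ and not on $\delta$. So its loss in the distance is only additive, and the constants come out explicitly.

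\textbf{Two simplifications.} First, the displayed inequality you doubted is correct as written. Only the position at the single time $t$ matters, $t-\sigma\in[0,t]$, and your displacement bound is uniform in $s\in(0,t]$. So the passage to $t'=(1+\theta)t$ is unnecessary. Second, your auxiliary parameter used to approximate $\widetilde d(x,B)$ can be dropped. Prove the bound with $d(x,B\setminus N)$ for each $N\in\sN$ and take the infimum of the right-hand side, as the paper does.

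\textbf{Shared implicit assumption.} Your step ``$\bP^z[d(X_s,z)<\theta\rho]\ge 1/2$'' and the paper's Step~3 both infer a lower bound on the mass of $p(s,z,\cdot)$ inside a ball from an upper bound on the mass outside it. That inference requires $P_s\mathbf{1}_E=1$, so both arguments tacitly assume $X$ is conservative. In the killed case your exit time $\tau_{B(x,\rho)}$ would also absorb the lifetime $\zeta$. You should therefore work with the hitting time $\sigma_{E\setminus B(x,\rho)}$, as the paper does, and state the conservativeness assumption explicitly.
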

\begin{proof} 
We first note that it suffices to establish \eqref{eq:33} with $d(x,B)$ in place of $\widetilde d(x,B)$. Indeed, for any $N\in\sN$, we have $\sigma_B=\sigma_{B\setminus N}$, $\bP^x$-a.s., and hence $\bP^x[\sigma_B\leq t]=\bP^x[\sigma_{B\setminus N}\leq t]$.
If
\[
    \bP^x\left[\sigma_{B\setminus N}\leq t \right]\leq C_3 \exp\left\{-C_2(1-\delta)\left(\frac{d(x, B\setminus N)}{t^{1/\beta}}\right)^{\beta/(\beta -1)}\right\}
\]
holds for every $N\in\sN$, then taking the infimum over $N\in\sN$ on the right-hand side yields the desired estimate \eqref{eq:33}.

Fix $0<\delta<1$. Let $r:=d(x,B)=d(x,\overline{B})$. If $r=0$, then \eqref{eq:33} holds trivially. Assume that $r>0$. We have
\[
    B\subset \overline{B}\subset B(x,r)^c=\{y\in E: d(y,x)\geq r\}.
\] 
Denote by $\sigma:=\sigma_{B(x,r)^c}$ the first hitting time of $B(x,r)^c$. It follows from $\sigma\leq \sigma_B$ that
\[
    \bP^x[\sigma_B\leq t]\leq \bP^x[\sigma\leq t],\quad \forall t>0,x\in E. 
\]
Note that $\bP^x[\sigma\leq t]=\lim_{n\rightarrow \infty} \bP^x[\sigma<t+1/n]$. 
Therefore, it suffices to prove the following estimate:
\begin{equation}\label{eq:Prop2.1}
\IP^x\left[\sigma< t\right] \le C_3 \exp\left\{-C_2(1-\delta)\left(\frac{r}{t^{1/\beta}}\right)^{\beta/(\beta -1)}\right\}, \quad \forall t>0, \, x\in E
\end{equation}
for some suitable constant $C_3>0$. In what follows, we will complete the proof in several steps. In the first three steps, we fix an auxiliary parameter $\lambda>0$. It should be noted that all expressions involving $\lambda$ in these steps are valid for every fixed $\lambda>0$.

\noindent \underline{\emph{Step 1}.}  We first establish the following inequality: For any $t>0$ and $x\in E$,
\begin{equation}\label{eq2.1}
\int_E p(t,x,y)\re^{\lambda d(x,y)}\mu(\dd y) \ge  \re^{\lambda r}\left(\inf_{\substack{0< s\le t \\ z \in B(x,r)^c}} \int_E p(s, z, y) \ee^{-\lambda d(z, y)} \mu(\dd y)\right)\cdot\IP^x \left[\sigma < t \right].
\end{equation}
To derive the inequality, we start from
\begin{equation*}
\IE^x\left[f(X_t)\right]\ge \IE^x\left[f(X_t);\; \sigma<t\right]=\int_E p^+_\sigma(t,x,y)f(y)\mu(\dd y), \quad t>0, x\in E, f\in \pp \sB(E),
\end{equation*}
where $p^+_\sigma(t,x,y)=\bE^x\left[p(t-\sigma,X_\sigma,y);t>\sigma \right]$; see Lemma~\ref{PROB1}. 
Taking  $f(\cdot)=\ee^{\lambda  d(x,\cdot)}$, we obtain 
\[
    \int_E p(t,x,y)\ee^{\lambda d(x,y)}\mu(\dd y)\geq \IE^x \left[ \mathbf{1}_{\{\sigma < t\}} \int_E  p(t-\sigma, X_\sigma, y) \ee^{\lambda d(x,y)} \mu(\dd y)\right]. 
\]
Note that $d(x,y)\geq d(x,X_\sigma)-d(X_\sigma, y)\geq r-d(X_\sigma,y)$, $\bP^x$-a.s., since $X_\sigma\in B(x,r)^c$, $\bP^x$-a.s. It follows that 
\[
    \int_E p(t,x,y)\ee^{\lambda d(x,y)}\mu(\dd y)\geq \ee^{\lambda r}\, \IE^x \left[ \mathbf{1}_{\{\sigma < t\}}\int_E p(t-\sigma, X_\sigma, y)\cdot \ee^{-\lambda  d(X_\sigma, y)}\mu(\dd y) \right],
\]
which immediately implies \eqref{eq2.1}. 

\noindent\underline{\emph{Step 2}.}  Let us estimate the left-hand side of \eqref{eq2.1}. By the sub-Gaussian heat kernel estimate, we have 
\begin{equation}\label{eq2:4}
\int_E p(t,x,y) \ee^{\lambda d(x,y)}\mu(\dd y) \le \int_E \frac{C_1}{t^{\alpha/\beta}}\exp\left\{\lambda d(x,y)-C_2\left(\frac{d(x,y)}{t^{1/\beta}}\right)^{\beta/(\beta -1)}\right\} \mu(\dd y).
\end{equation}
 Write $\gamma := \beta/(\beta -1)$ for notation convenience. 
 Then 
\begin{eqnarray*}
\lambda d(x,y)-C_2\frac{d(x,y)^\gamma}{t^{\gamma -1}} =\left(\lambda d(x,y)-C_2(1-\delta/2)\frac{d(x,y)^\gamma }{t^{\gamma -1}}\right) -\frac{\delta}{2} C_2 \frac{d(x,y)^\gamma}{t^{\gamma -1}}.
\end{eqnarray*}
Consider the function
\[
    g(u):=\lambda u-Au^{\gamma}, \quad u\geq 0,
\]
where $A=C_2(1-\delta/2)/t^{\gamma -1}$. A standard calculus argument shows that  the maximum of $g$ is attained at $$u_\mathrm{max}=(\lambda/\gamma A)^{1/(\gamma -1)}$$ and
\[
    g(u_\mathrm{max})= \lambda \left(\frac{\lambda}{A\gamma}\right)^{\frac{1}{\gamma -1}} -A \left(\frac{\lambda}{\gamma A}\right)^{\frac{\gamma}{\gamma-1}} =\frac{}{}\frac{(\beta -1)^{\beta -1}\lambda ^\beta t}{\beta ^\beta(C_2 (1-\delta/2))^{\beta -1}}.
\]
In particular, 
\begin{equation*}
\lambda d(x,y)-C_2(1-\delta/2)\frac{d(x,y)^\gamma }{t^{\gamma -1}} \le \frac{(\beta -1)^{\beta -1}\lambda ^\beta t}{\beta ^\beta(C_2 (1-\delta/2))^{\beta -1}},\quad \forall x,y\in E. 
\end{equation*}
Applying the above inequality to the right-hand side of \eqref{eq2:4}, we obtain
\begin{equation}\label{eq:2.5}
\int_E p(t,x,y) \ee^{\lambda d(x,y)}\mu(\dd y)
 \le  \exp\left\{\frac{(\beta -1)^{\beta -1}\lambda ^\beta t}{\beta ^\beta(C_2 (1-\delta/2))^{\beta -1}} \right\}\int_E \frac{C_1}{t^{\alpha/\beta }}\exp\left\{-\frac{C_2 \delta}{2} \frac{d(x,y)^\gamma}{t^{\gamma -1}}\right\} \mu(\dd y). 
\end{equation}
Splitting $E$ as $E=\bigcup_{k=0}^\infty A_k$, where $A_k:=\{y\in E: k t^{1/\beta} \le d(x,y)<(k+1)t^{1/\beta}\}$, and applying \eqref{eq:41}, we have
\[
\begin{aligned}
\int_E \frac{C_1}{t^{\alpha/\beta }}\exp\left\{-\frac{C_2 \delta}{2}  \frac{d(x,y)^\gamma}{t^{\gamma -1}}\right\} \mu(\dd y)& = \sum_{k=0}^\infty \int_{A_k} \frac{C_1}{t^{\alpha/\beta }}\exp\left\{-\frac{C_2 \delta}{2}  \frac{d(x,y)^\gamma}{t^{\gamma -1}}\right\}  \mu(\dd y)
\\
& \le  \sum_{k=0}^\infty \frac{C_1}{t^{\alpha/\beta }}  \exp\left\{-\frac{C_2 \delta}{2}  k^{\beta/(\beta -1)}\right\}C_0 (k+1)^\alpha t^{\alpha/\beta} \\
&=C_0C_1 \sum_{k=0}^\infty \exp\left\{-\frac{C_2 \delta}{2}  k^{\beta/(\beta -1)}\right\}(k+1)^\alpha.
\end{aligned}
\]
Since $\beta/(\beta-1)>1$, it is straightforward to verify that
\begin{equation}\label{eq:47}
    C_\delta:=\sum_{k=0}^\infty \exp\left\{-\frac{C_2 \delta}{2}  k^{\beta/(\beta -1)}\right\}(k+1)^\alpha
\end{equation}
is finite and depends only on  $\delta$, $\alpha, \beta$ and $C_2$. 
This, together with \eqref{eq:2.5}, yields
\begin{equation}\label{eq:2.7}
\int_E p(t,x,y) \ee^{\lambda d(x,y)}\mu(\dd y) \le C_0C_1C_\delta  \exp\left\{\frac{(\beta -1)^{\beta -1}\lambda ^\beta t}{\beta ^\beta(C_2 (1-\delta/2))^{\beta -1}} \right\},\quad \forall t>0,x\in E. 
\end{equation}

\noindent\underline{\emph{Step 3}.} Next, we estimate the term inside the parentheses on the right-hand side of \eqref{eq2.1}. To this end, fix $0<s\leq t$ and $z\in B(x,r)^c$. Let $\eta>0$ be a constant to be determined later and set  $\rho: = \eta s^{1/\beta}$. For $k\ge 1$, let $B_k:=\{y\in E: k\rho \le d(z,y)<(k+1)\rho\}$. By the sub-Gaussian heat kernel upper bound and the volume growth upper bound \eqref{eq:41}, we have 
\begin{equation}\label{eq:48-2}
\begin{aligned}
 \int_{E\backslash B(z,\rho)} p(s,z, y) \mu(\dd y)
& = \sum_{k=1}^\infty \int_{B_k}p(s,z,y) \mu(\dd y)
\\
& \le  \sum_{k=1}^\infty \frac{C_1}{s^{\alpha/\beta}}\exp\left\{-\frac{C_2 (k\rho)^{\frac{\beta}{\beta -1}}}{s^{\frac{1}{\beta -1}}} \right\}C_0 (k+1)^\alpha \rho^\alpha
\\
&=C_0C_1 \eta^\alpha \sum_{k=1}^\infty \exp\left\{-C_2 k^{\frac{\beta}{\beta -1}}\eta^{\frac{\beta}{\beta -1}}\right\}(k+1)^\alpha.
\end{aligned}\end{equation}
Since $k^{\beta/(\beta-1)}\geq (k-1)^{\beta/(\beta-1)}+1$ for $k\geq 2$, the last term in \eqref{eq:48-2} is bounded by
\[
  C_0C_12^\alpha\eta^\alpha \ee^{-C_2 \eta^{\frac{\beta}{\beta -1}}}+C_0C_1\eta^\alpha \ee^{-C_2\eta^{\frac{\beta}{\beta -1}}}\sum_{k=2}^\infty \exp\left\{-C_2 (k-1)^{\frac{\beta}{\beta -1}}\eta^{\frac{\beta}{\beta -1}}\right\}(k+1)^\alpha,
\]
which converges to $0$ as $\eta\uparrow \infty$. Therefore, we can choose $\eta$, depending only on $\alpha,\beta,C_0,C_1,C_2$, such that 
\[
\int_{E\backslash B(z,\eta s^{1/\beta})} p(s,z, y) \mu(\dd y)\leq 1/2,\quad \quad \forall s>0, z\in B(x,r)^c. 
 \]
 As a result, for any $0<s\leq t$ and $z\in B(x,r)^c$,
 \begin{equation}\label{eq:48}
\int_{E} p(s,z,y) \ee^{-\lambda d(z,y)}\mu(\dd y) \ge  \int_{B(z,\eta s^{1/\beta})} p(s,z,y) \ee^{-\lambda d(z,y)}\mu(\dd y) \geq   \frac{1}{2}\ee^{-\lambda \eta s^{1/\beta}}\geq \frac{1}{2}\ee^{-\lambda \eta t^{1/\beta}}.
\end{equation}

\noindent\underline{\emph{Step 4}.} Substituting \eqref{eq:2.7} and \eqref{eq:48} into the left-hand side and the term inside the parentheses on the right-hand side of \eqref{eq2.1}, respectively, and rearranging the terms, we obtain that, for any $t>0$ and $x\in E$,  
\begin{equation}\label{eq:2.9}
\IP^x\left[\sigma < t\right] \le  2C_0C_1C_\delta \exp\left\{(\eta t^{1/\beta }- r)\cdot \lambda +\frac{(\beta -1)^{\beta -1} t}{\beta ^\beta(C_2 (1-\delta/2))^{\beta -1}} \cdot \lambda^\beta \right\},
\end{equation}
where $\eta$ is the constant chosen in the previous step. We now aim to derive \eqref{eq:Prop2.1} from \eqref{eq:2.9}. 

When $t\geq (r/\eta)^{\beta}$,  namely, $\eta t^{1/\beta}-r\geq 0$, we have
\[
    \exp\left\{-C_2(1-\delta)\left(\frac{r}{t^{1/\beta}}\right)^{\beta/(\beta -1)}\right\}\geq  \exp\left\{-C_2(1-\delta)\eta^{\beta/(\beta -1)}\right\}.
\]
Let $C^{(1)}_3:=\exp\left\{C_2(1-\delta)\eta^{\beta/(\beta -1)}\right\}$, which is a constant depending on $\alpha,\beta,C_0 , C_1,C_2$ and $\delta$. Then
\begin{equation}\label{eq:412}
    \bP^x[\sigma<t]\leq 1\leq C^{(1)}_3 \exp\left\{-C_2(1-\delta)\left(\frac{r}{t^{1/\beta}}\right)^{\beta/(\beta -1)}\right\}.
\end{equation}

When $t< (r/\eta)^{\beta}$, namely, $\eta t^{1/\beta}-r< 0$, we minimize the right-hand side of \eqref{eq:2.9} over all $\lambda>0$. Equivalently, we take
\begin{equation*}
\lambda =C_2(1-\delta/2 )\frac{\beta}{\beta -1}\left(\frac{r-\eta t^{1/\beta}}{t}\right)^{\frac{1}{\beta -1}}>0
\end{equation*} 
 to obtain
\begin{equation}\label{four-stars}
\IP^x\left[\sigma <t\right] \le  2C_0C_1C_\delta  \exp\left\{ -C_2(1-\delta/2)\left(\frac{r-\eta t^{1/\beta}}{t^{1/\beta}}\right)^{\frac{\beta}{\beta -1}}\right\}.
\end{equation}
Define
\[
    h(t):=(1-\delta)\left(\frac{r}{t^{1/\beta}}\right)^{\frac{\beta }{\beta -1}} -(1-\delta/2)\left(\frac{r-\eta t^{1/\beta}}{t^{1/\beta}}\right)^{\frac{\beta}{\beta -1}},\quad 0<t<(r/\eta)^\beta. 
\]
Then it follows from \eqref{four-stars} that
\begin{equation}\label{eq:413-2}
    \bP^x[\sigma<t]\leq 2C_0C_1C_\delta \ee^{C_2h(t)} \cdot \exp\left\{-C_2(1-\delta)\left(\frac{r}{t^{1/\beta}}\right)^{\frac{\beta }{\beta -1}}\right\}. 
\end{equation}
We claim that
\[    C_3^{(2)}:=\sup_{0<t<(r/\eta)^\beta}h(t)=\eta^{\frac{\beta}{\beta-1}}\cdot \sup_{u>1} \widetilde{h}(u),
\]
where $\widetilde{h}(u):=(1-\delta)u^{\frac{\beta }{\beta -1}} -(1-\delta/2)\left(u-1\right)^{\frac{\beta}{\beta -1}}$ for $u> 1$, is a finite constant depending on $\beta,\delta, \eta$. In fact, a standard calculus argument shows that $\widetilde{h}$ attains its maximum at 
\[
    \widetilde{u}_\mathrm{max}=\frac{1}{1-\left(\frac{1-\delta}{1-\delta/2}\right)^{\beta-1}}>1,
\]
which implies that
\begin{equation}\label{eq:413}
    C^{(2)}_3=\eta^{\frac{\beta}{\beta-1}}\widetilde h(\widetilde{u}_\mathrm{max})
\end{equation}
is finite. 

Finally, define
\[
    C_3:=\max \left\{C^{(1)}_3, 2C_0C_1C_\delta\re^{C_2C^{(2)}_3} \right\},
\]
where $C_\delta$ is given by \eqref{eq:47}, $C^{(1)}_3=\exp\left\{C_2(1-\delta)\eta^{\beta/(\beta -1)}\right\}$, and $C^{(2)}_3$ is given by \eqref{eq:413}. Clearly, $C_3$ is a constant depending only on $C_0, C_1,C_2, \alpha,\beta$, and $\delta$. Combining \eqref{eq:412} and \eqref{eq:413-2}, we finally obtain \eqref{eq:Prop2.1} with this choice of $C_3$. This completes the proof.
\end{proof}
\begin{rmk}
Since the constant $\eta$ chosen in this proof is independent of $\delta$, we have
\[
    \lim_{\delta\downarrow 0}C^{(1)}_3=\exp\{C_2 \eta^{\beta/(\beta-1)}\},
\]
while $C_\delta$ and $C_3^{(2)}$ tend to $\infty$ as $\delta\downarrow 0$. 
\end{rmk}

\section{Lower bound estimate of hitting time distribution}\label{SEC4}

From now on, we turn to establishing the lower bound estimate of $\bP^x[\sigma_B\leq t]$ for $B\in \sB^n(E)$ based on the sub-Gaussian lower bound of the transition density function.  

In this section, we impose the following additional assumption on the Borel right process $X$: 
\begin{itemize}
    \item[(H1a)] $X$ is symmetric with respect to $\mu$, namely, 
    \[
        \int_E P_t f(x)g(x)\mu(\dd x)=\int_E f(x)P_tg(x)\mu(\dd x),\quad \forall f,g\in \bb \sB(E). 
    \]
\end{itemize}
Under this symmetry assumption, the transition density function $p(t,x,y)$ is also symmetric, i.e., 
\[
    p(t,x,y)=p(t,y,x),\qquad  \text{for all } t>0,  x,y\in E.
\] 
In particular, $u_\lambda(x,y)=u_\lambda(y,x)$ for all $\lambda\geq 0$ and $x,y\in E$. 

Another consequence of (H1a) is that $X$ is associated with the so-called symmetric \emph{Dirichlet form} on $L^2(E,\mu)$:
\[
    \begin{aligned}
    & \FF:=\{f\in L^2(E,\mu): \lim_{t\downarrow 0}\EE^{(t)}(f,f)<\infty\},\\
    & \EE(f,g):=\lim_{t\downarrow 0}\EE^{(t)}(f,g),\quad f,g\in \FF,
    \end{aligned}
\]
where $\EE^{(t)}(f,g):=\frac{1}{t}(f-P_t f,g)_\mu$ for $f,g\in L^2(E,\mu)$, and $(\cdot,\cdot)_\mu$ denotes the inner product of $L^2(E,\mu)$. It is well known that $(\EE,\FF)$ is quasi-regular in the sense of \cite[Definition~1.3.8]{CF}; see, e.g., \cite[Theorem~1.5.3]{CF}. Without loss of generality (see, e.g., \cite[Theorem~1.4.3]{CF}), we make the following regularity assumption:  
\begin{itemize}
    \item[(H1b)] $(E,d)$ is a locally compact separable metric space,  $\mu$ is a positive Radon measure on $E$ with full support, $X$ is a \emph{Hunt process} (see, e.g.,  \cite[Definition A.1.23]{CF}) whose associated Dirichlet form $(\EE,\FF)$ on $L^2(E,\mu)$ is regular. 
\end{itemize}
(In fact, under the quasi-regular setting, analogous results can still be obtained; see Section~\ref{SEC52}.)

All terminology and notation related to Dirichlet forms will be kept consistent with those in \cite{CF} and \cite{FOT} as much as possible. For example, the Dirichlet form $(\EE, \FF)$ is said to be regular if $\FF\cap C_c(E)$ is dense in $\FF$ with respect to the $\EE_1$-norm 
\[
    \|f\|_{\EE_1}:=\sqrt{\EE_1(f,f)}=\sqrt{\EE(f,f)+(f,f)_\mu},\quad f\in \FF,
\] 
and dense in $C_c(E)$ with respect to the uniform norm $\|f\|_\infty:=\sup_{x\in E}|f(x)|$ for $f\in C_c(E)$, where $C_c(E)$ denotes the family of all continuous functions on $E$ with compact support. Let $\FF_\mathrm{e}$ denote the extended Dirichlet space of $(\EE,\FF)$. For convenience, every function in $\FF_\re$ is tacitly identified with its $\EE$-quasi-continuous version. 
For $\lambda>0$, denote by $\mathrm{Cap}_{(\lambda)}$ the $\lambda$-order capacity associated with $(\EE,\FF)$  (see \cite[\S2.1]{FOT}). Note that all $\lambda$-order capacities are equivalent to the $1$-order capacity. More precisely, for all $A\subset E$, $\lambda \mathrm{Cap}_{(1)}(A)\leq \mathrm{Cap}_{(\lambda)}(A)\leq \mathrm{Cap}_{(1)}(A)$ if $\lambda\leq 1$ and $\lambda^{-1} \mathrm{Cap}_{(\lambda)}(A)\leq \mathrm{Cap}_{(1)}(A)\leq \mathrm{Cap}_{(\lambda)}(A)$ if $\lambda > 1$. For convenience, the $1$-order capacity $\mathrm{Cap}_{(1)}$ will be abbreviated as $\mathrm{Cap}$. When $(\EE,\FF)$ is transient, $\mathrm{Cap}_{(0)}$ denotes the $0$-order capacity (see \cite[page 74]{FOT}). 

\subsection{Transient case}\label{S:LBHT-transient}

Let us first consider the case where $(\EE,\FF)$ is transient. This restriction will be removed in the next subsection. 
 To proceed, we prepare the following useful lemma. 

\begin{lem}\label{L:1.1}
Assume that $X$ satisfies (AC), (H1a), and (H1b), and additionally that $(\EE,\FF)$ is transient. 
Let $B\subset E$ be a nearly Borel set of finite $0$-order capacity. Then there exists a unique  finite, positive measure $\nu_B$ with $\mathrm{supp}[\nu_B]\subset \overline{B}$ such that 
\begin{equation}\label{eq:22}
    \bP^x[\sigma_B<\infty]=\int_E u(x,y)\nu_B(\dd y),\quad \forall x\in  E,
\end{equation}
where $u(x,y)$ denotes the potential kernel of $X$.
In particular,
\begin{equation}\label{eq:lem3.1}
\IP^x\left[\sigma_B\le t\right] \ge \emph{Cap}_{(0)}(B)\inf_{y\in \overline B}\int_0^t  p(s, x, y)\dd s,\quad \forall t>0,x\in E.
\end{equation}
\end{lem}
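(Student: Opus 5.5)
The plan is to identify $\bP^x[\sigma_B<\infty]$ with the $0$-order equilibrium potential of $B$ and then read off \eqref{eq:lem3.1} from a last-exit decomposition.

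\textbf{Step 1: the equilibrium measure.} Since $(\EE,\FF)$ is transient and $\mathrm{Cap}_{(0)}(B)<\infty$, the theory in \cite[\S2.2, \S4.3]{FOT} gives an equilibrium potential $e_B\in\FF_\re$ with $e_B(x)=\bP^x[\sigma_B<\infty]$ q.e. It also gives a unique finite positive measure $\nu_B$ of finite $0$-order energy such that:
\begin{itemize}
\item $\EE(e_B,v)=\int v\,\dd\nu_B$ for all $v\in\FF_\re$;
\item $\mathrm{supp}[\nu_B]\subset\overline B$;
\item $\nu_B(E)=\mathrm{Cap}_{(0)}(B)$.
\end{itemize}
One standard route: $B$ is quasi-contained in quasi-closed sets and $e_B$ is the $\EE$-projection of $0$ onto the convex set $\{v\in\FF_\re: v\geq 1\text{ q.e. on }B\}$. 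This makes $e_B$ excessive and $\EE(e_B,v)\ge 0$ for $v\ge0$. Positivity then yields the measure $\nu_B$, supported on $\overline B$ because $\EE(e_B,v)=0$ for $v$ supported off $\overline B$.

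\textbf{Step 2: the identity \eqref{eq:22} everywhere.} Under (AC) and symmetry, $U\nu_B(x):=\int u(x,y)\nu_B(\dd y)$ is the excessive version of the $0$-order potential of $\nu_B$. Indeed, $\EE(U\nu_B,v)=\int v\,\dd\nu_B$, using Fubini together with $u(x,y)=u(y,x)$. Hence $U\nu_B=e_B$ $\mu$-a.e. Both sides of \eqref{eq:22} are excessive for $X$:
\begin{itemize}
\item $x\mapsto\bP^x[\sigma_B<\infty]$ is excessive as a hitting probability;
\item $U\nu_B$ is excessive since $P_t U\nu_B=\int_t^\infty\!\int p\,\dd\nu_B\,\dd s\uparrow U\nu_B$.
\end{itemize}
Under (AC), two excessive functions that agree $\mu$-a.e. agree everywhere, since $f=\lim_{t\downarrow0}P_tf$ and $P_t$ only sees $\mu$-classes. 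I expect this step to be the main obstacle. One must check carefully that $U\nu_B$ is really the potential of $\nu_B$ in the Dirichlet-form sense. The point is that the absolutely continuous kernel $u$ represents the $0$-resolvent on measures of finite energy, which can be done via $\lambda$-order approximations $U_\lambda\nu_B\uparrow U\nu_B$. Uniqueness of $\nu_B$ follows because a potential determines its measure.

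\textbf{Step 3: the lower bound.} The main tool is the last-exit (time-split) formula. Apply \eqref{eq:22} after time $t$ via the Markov property:
\[
\bP^x[t<\sigma_B<\infty]\le\bP^x[\sigma_B\circ\theta_t<\infty]=P_t e_B(x)=\int_E\int_t^\infty p(s,x,y)\,\dd s\,\nu_B(\dd y).
\]
Subtract this from \eqref{eq:22}:
\[
\bP^x[\sigma_B\le t]\ge\int_E\int_0^t p(s,x,y)\,\dd s\,\nu_B(\dd y)\ge \nu_B(\overline B)\inf_{y\in\overline B}\int_0^tp(s,x,y)\,\dd s.
\]
Here the infimum is taken over $\overline B$ because $\nu_B$ is supported there. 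Finally $\nu_B(\overline B)=\nu_B(E)=\mathrm{Cap}_{(0)}(B)$. This gives \eqref{eq:lem3.1}.
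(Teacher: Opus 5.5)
Your proposal follows the same route as the paper. The equilibrium measure $\nu_B$ comes from the transient Dirichlet-form theory. You then identify $\bP^x[\sigma_B<\infty]$ with $\int_E u(x,y)\,\nu_B(\dd y)$, first $\mu$-a.e.\ and then everywhere, because both functions are excessive and under (AC) $P_t$ only sees $\mu$-classes. Finally, the Markov property at time $t$ together with Chapman--Kolmogorov gives \eqref{eq:lem3.1}. Steps 1 and 3 are essentially identical to the paper's.

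The one point you have not actually proved is the one you flagged in Step 2. Fubini and $u(x,y)=u(y,x)$ do not yield the energy identity $\EE(U\nu_B,v)=\int_E v\,\dd\nu_B$, which in any case presupposes $U\nu_B\in\FF_{\mathrm e}$. What they do yield is the pairing identity $(f,U\nu_B)_\mu=\int_E Uf\,\dd\nu_B$.

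The paper uses this pairing in the other direction, so that membership of $U\nu_B$ in $\FF_{\mathrm e}$ is never needed. Let $h$ be a reference function of the transient form, and for $f\in\pp\sB(E)$ put $f_n:=f\wedge(nh)$. Then $Uf_n\in\FF_{\mathrm e}$ and $\EE(Uf_n,v)=(f_n,v)_\mu$ for all $v\in\FF_{\mathrm e}$ (\cite[Theorem~1.5.4]{FOT}). Hence
\[
(f_n,e_B)_\mu=\EE(Uf_n,e_B)=\int_E Uf_n\,\dd\nu_B=(f_n,U\nu_B)_\mu ,
\]
and letting $n\to\infty$ gives $e_B=U\nu_B$ $\mu$-a.e.

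This replaces your suggested $\lambda$-order approximation. That route can be made to work, but it is more roundabout: it needs an extra convergence argument in $\FF_{\mathrm e}$, since $e_B$ need not lie in $L^2(E,\mu)$. Moreover, the natural way to identify $U_\lambda\nu_B$ with $\int_E u_\lambda(\cdot,y)\,\nu_B(\dd y)$ is the same pairing argument at level $\lambda$.
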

\begin{proof}
Define $p_B(x):=\bP^x[\sigma_B<\infty]$ for $x\in E$. According to \cite[Theorem 4.3.3]{FOT}, $p_B$ is an $\EE$-quasi-continuous modification of $e^{(0)}_B$, the $0$-order equilibrium potential of $B$. By the 0-order capacity version of \cite[Theorem 2.1.5]{FOT} (see \cite[page 74]{FOT}), we have
\[
    \EE(p_B,g)\geq 0,\quad \forall g\in \FF_\re \text{ with }g\geq 0,\; \EE\text{-q.e. on }B. 
\]
By \cite[Lemma 2.2.10]{FOT} and the $0$-order capacity version of \cite[Lemma 2.2.6]{FOT}, there exists a unique positive  finite measure $\nu_B$ on $E$ of finite $0$-order energy integral with $\mathrm{supp}[\nu_B]\subset \overline{B}$ such that 
\[
    p_B=U\nu_B,\; \mu\text{-a.e.},
\] 
where $U\nu_B$ denotes the $0$-order potential of $\nu_B$ (see \cite[page 85]{FOT}). 
It suffices to show that $\nu_B$ satisfies \eqref{eq:22}. 
The proof is standard, but we include the details for completeness.  Indeed,  it follows from \cite[Theorem~2.2.5]{FOT} that
\begin{equation}\label{eq:21}
    \EE(p_B,g)=\int_E g(x)\nu_B(\dd x),\quad \forall g\in \FF_\re. 
\end{equation}
Let $h$ be a reference function of the transient Dirichlet form $(\EE,\FF)$; see \cite[page 40]{FOT}. For an arbitrary $f\in \pp \sB(E)$, define
\[
    f_n:=f\wedge (nh),\quad n\geq 1. 
\]
According to \cite[Theorem 1.5.4]{FOT}, 
\[
    Uf_n(\cdot):=\int_E u(\cdot,y)f_n(y)\mu(\dd y)\in \FF_\re,\quad \EE(Uf_n,v)=(f,v)_{\mu},\; \forall v\in \FF_\re. 
\]
Taking $v=p_B$ and applying \eqref{eq:21}, we obtain
\[
    (f_n,p_B)_\mu=\EE(Uf_n,p_B)=\int_E Uf_n(x)\nu_B(\dd x)=\left(f_n, \int_E u(x,\cdot)\nu_B(\dd x)\right)_\mu.
\]
Letting $n\uparrow \infty$ yields 
\[
    p_B(\cdot)=\int_E u(x,\cdot)\nu_B(\dd x),\quad \mu\text{-a.e.}
\]
Since $u(x,y)=u(y,x)$, it follows that 
\begin{equation}\label{eq:25}
   p_B(\cdot)= \int_E u(\cdot,y)\nu_B(\dd y),\quad \mu\text{-a.e.}
\end{equation}
It is straightforward to verify that both $p_B$ and $u_B(\cdot):=\int_E u(\cdot, y)\nu_B(\dd y)$ are excessive with respect to $X$, namely, $P_t\varphi\leq \varphi$ and $\lim_{t\downarrow 0}P_t\varphi=\varphi$ pointwise for $\varphi=p_B$ or $u_B$. 
 Together with \eqref{eq:25}, this yields \eqref{eq:22}; see, e.g., \cite[Theorem~A.2.17~(iii)]{CF}.

 Fixing $x\in E$ and $t>0$, we proceed to establish \eqref{eq:lem3.1}. 
 Note that
 \[
     p_B(t,x):=\bP^x[\sigma_B\leq t]=p_B(x)-\bP^x[t<\sigma_B<\infty]. 
 \]
 Since $\sigma_B=t+\sigma_B\circ \theta_t$ on $\{\sigma_B>t\}$, it follows from the Markov property of $X$ that
 \begin{equation}\label{eq:36}
 \begin{aligned}
\bP^x[t<\sigma_B<\infty]&=\bP^x\left[\bP^x\left[\sigma_B\circ \theta_t+t<\infty,\sigma_B>t |\sF_t \right]\right] \\
 &=\bP^x\left[\bP^{X_t}[\sigma_B<\infty];\sigma_B>t \right] \\
 &=\bE^x\left[p_B(X_t);t<\sigma_B \right]\\
 &\leq \bE^x\left [p_B(X_t)\right]=\int_E p(t,x,y)p_B(y)\mu(\dd y). 
 \end{aligned}
 \end{equation}
 Using \eqref{eq:25} and \eqref{potential-kernel}, and applying the Fubini theorem, we have
 \[
 \begin{aligned}
 \int_E p(t,x,y)p_B(y)\mu(\dd y)&=    \int_E p(t,x,y)\left(\int_{\overline B} \int_0^\infty p(s,y,z)\dd s\,\nu_B(\dd z) \right)\mu(\dd y) \\
 &=\int_{\overline B}\nu_B(\dd z)\int_0^\infty \dd s \int_E p(t,x,y)p(s,y,z)\mu(\dd y) \\
 &=\int_{\overline B}\nu_B(\dd z)\int_t^\infty p(s,x,z)\dd s. 
 \end{aligned}
 \]
 Consequently, 
\[
\begin{aligned}
    p_B(t,x)&\geq p_B(x)-\int_{\overline B}\nu_B(\dd z)\int_t^\infty p(s,x,z)\dd s \\
    &=\int_{\overline B}\nu_B(\dd z)\int_0^t p(s,x,z)\dd s \\
    &\geq \nu_B(\overline B)\inf_{z\in \overline B}\int_0^t p(s,x,z)\dd s.
\end{aligned}
\]
 Finally, it suffices to note that
\[
    \nu_B(\overline B)= \int_E p_B(x)\nu_B(\dd x)=\EE(p_B,p_B)=\mathrm{Cap}_{(0)}(B). 
\]
This completes the proof.
\end{proof}
\begin{rmk}\label{RM42}
    Note that $\mathrm{Cap}_{(0)}(B)=\nu_B(\overline{B})>0$ if and only if $B$ is not polar.
\end{rmk}

The main result of this section is stated as follows. 

\begin{thm}\label{THM3.2}
Let $X$ be a Borel right process on $(E,d)$ satisfying (AC), (H1a) and (H1b), whose associated Dirichlet form $(\EE,\FF)$ on $L^2(E,\mu)$ is transient. 
Assume further that $X$ admits the following short-time heat kernel lower bound at a fixed point $x\in E$: For some $T>0, \alpha>0$ and $\beta>1$, there exist constants $C_1,C_2>0$  such that
\begin{equation}\label{SubGaussianHKLB}
p(t,x,y)\ge \frac{C_1}{t^{\alpha/\beta}}\exp\left\{-C_2\left(\frac{d(x,y)}{t^{1/\beta}}\right)^{\beta/(\beta -1)}\right\}, \quad \forall t\in (0, T], \, y\in E.
\end{equation}
Let $B\subset E$ be a nearly Borel set of finite $0$-order capacity.
Then, for any $\delta,\rho>0$, 
 \begin{equation}\label{eq:215}
        \bP^x[\sigma_B\leq t]\geq \mathrm{Cap}_{(0)}(B_{x,\delta})\cdot  \frac{C_1C_3}{ t^{(\alpha-\beta)/\beta}}\exp\left\{-(1+\rho)C_2\left( \frac{\widetilde d(x,B)+\delta}{t^{1/\beta}}\right)^{\beta/(\beta-1)}\right\},\quad 0<t\leq T,
    \end{equation}
    where $B_{x,\delta}:=\{y\in B:d(y,x)\leq \widetilde d(x,B)+\delta\}$, $C_1, C_2$ are preserved as in \eqref{SubGaussianHKLB}, and $C_3=(\beta-1)\rho\cdot \min\{(1+\rho)^{\alpha-\beta-\alpha/\beta}, 1\}>0$ is a constant depending on $\alpha, \beta,\rho$ (but not on $T$ or $\delta$).
\end{thm}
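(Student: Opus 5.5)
Since $B_{x,\delta}\subset B$, we have $\sigma_B\le\sigma_{B_{x,\delta}}$, so $\bP^x[\sigma_B\le t]\ge \bP^x[\sigma_{B_{x,\delta}}\le t]$. The plan is to apply Lemma~\ref{L:1.1} to the set $B_{x,\delta}$. This set is nearly Borel, being the intersection of $B$ with a closed ball, and its $0$-order capacity is at most $\mathrm{Cap}_{(0)}(B)<\infty$ by monotonicity. Lemma~\ref{L:1.1} then gives
\[
\bP^x[\sigma_B\le t]\ge \mathrm{Cap}_{(0)}(B_{x,\delta})\inf_{y\in\overline{B_{x,\delta}}}\int_0^t p(s,x,y)\,\dd s .
\]
If $B$ is polar, then $\mathrm{Cap}_{(0)}(B_{x,\delta})=0$ by Remark~\ref{RM42} and the claim is trivial. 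If $B$ is not polar, Lemma~\ref{LM25} shows that the capacity factor is positive, although positivity is not actually needed for the inequality. Every $y\in\overline{B_{x,\delta}}$ satisfies $d(x,y)\le R:=\widetilde d(x,B)+\delta$, because the defining inequality is closed. Using \eqref{SubGaussianHKLB} with $s\le t\le T$, the integrand is bounded below by
\[
\frac{C_1}{s^{\alpha/\beta}}\exp\left\{-C_2 R^{\gamma}s^{-1/(\beta-1)}\right\},\qquad \gamma=\beta/(\beta-1),
\]
and this lower bound is uniform in $y$.

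It remains to bound $\int_0^t s^{-\alpha/\beta}\exp\{-C_2R^\gamma s^{-1/(\beta-1)}\}\,\dd s$ from below by
\[
C_3\, t^{1-\alpha/\beta}\exp\left\{-(1+\rho)C_2R^\gamma t^{-1/(\beta-1)}\right\}.
\]
The factor $t^{1-\alpha/\beta}=t^{-(\alpha-\beta)/\beta}$ matches the statement. The natural approach is to integrate only over $s\in[t/(1+\rho)^{\beta-1},\,t]$. On this interval $s^{-1/(\beta-1)}\le (1+\rho)t^{-1/(\beta-1)}$, so the exponential is at least $\exp\{-(1+\rho)C_2R^\gamma t^{-1/(\beta-1)}\}$. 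The factor $s^{-\alpha/\beta}$ is at least $\min\{t^{-\alpha/\beta},\,(t(1+\rho)^{1-\beta})^{-\alpha/\beta}\}$; which of the two applies depends on the sign of $\alpha$ relative to the monotonicity, and here it is simply $t^{-\alpha/\beta}$ since $s\le t$. The interval has length $t\bigl(1-(1+\rho)^{1-\beta}\bigr)$.

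This gives a bound of the correct shape with some constant depending on $\alpha,\beta,\rho$. To recover the stated constant $C_3=(\beta-1)\rho\min\{(1+\rho)^{\alpha-\beta-\alpha/\beta},1\}$, I would instead substitute $s=t u^{-(\beta-1)}$, so that $s^{-1/(\beta-1)}=u\,t^{-1/(\beta-1)}$ and $\dd s=(\beta-1)t\,u^{-\beta}\,\dd u$. Integrating over $u\in[1,1+\rho]$ then yields
\[
(\beta-1)\,t^{1-\alpha/\beta}\int_1^{1+\rho}u^{\alpha(\beta-1)/\beta-\beta}e^{-C_2R^\gamma u t^{-1/(\beta-1)}}\,\dd u .
\]
In this integral, bound the exponential by its value at $u=1+\rho$. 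The power $u^{\alpha-\alpha/\beta-\beta}$ is at least $\min\{1,(1+\rho)^{\alpha-\beta-\alpha/\beta}\}$ on $[1,1+\rho]$, and the interval has length $\rho$. This produces exactly $C_3$.

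None of the steps is a real obstacle. The points that need care are bookkeeping: checking that the infimum over $\overline{B_{x,\delta}}$ still respects the distance bound $R$, and choosing the substitution so that the constant comes out independent of $T$ and $\delta$.
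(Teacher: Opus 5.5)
Your proposal is correct and follows the paper's proof. You reduce to $B_{x,\delta}$ via Lemma~\ref{L:1.1}, use $d(x,y)\le \widetilde d(x,B)+\delta$ on the closure of $B_{x,\delta}$, and after a change of variables restrict the time integral to a window of relative length $\rho$; your substitution $s=tu^{-(\beta-1)}$ with $u\in[1,1+\rho]$ is just the paper's $u=as^{-1/(\beta-1)}$ with $u\in[v_{a,\beta}(t),(1+\rho)v_{a,\beta}(t)]$ rescaled by $v_{a,\beta}(t)$, and it yields the same constant $C_3$.
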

\begin{proof}
Fix $\delta, \rho>0$ and $0<t\leq T$. Note that $\sigma_B\leq \sigma_{B_{x,\delta}}$, and that $B_{x,\delta}\subset B$ is clearly of finite $0$-order capacity. It follows from \eqref{eq:lem3.1} that
\begin{equation}\label{eq:27-2}
    \bP^x[\sigma_B\leq t]\geq \bP^x[\sigma_{B_{x,\delta}}\leq t]\geq \mathrm{Cap}_{(0)}(B_{x,\delta}) \inf_{y\in \overline B_{x,\delta}} \int_0^t p(s,x,y)\dd s. 
\end{equation}
For any $y\in \overline B_{x,\delta}$, there exists $y_n\in B_{x,\delta}$ such that $\lim_{n\rightarrow \infty}d(y_n,y)=0$.  This implies $d(x,y)=\lim_{n\rightarrow \infty}d(x,y_n)\leq \widetilde d(x,B)+\delta$. Thus,
\begin{equation}\label{eq:27-3}
    p(s,x,y)\geq \frac{C_1}{s^{\alpha/\beta}}\exp\left\{-C_2\left(\frac{\widetilde d(x,B)+\delta}{s^{1/\beta}}\right)^{\beta/(\beta -1)}\right\},\quad 0<s\leq t, \quad \text{for all } y\in \overline B_{x,\delta}.
    \end{equation}
We now derive a lower bound for
\begin{equation}\label{eq:310}
  I(t):= \int_0^t \frac{1}{s^{\alpha/\beta}}\exp\left\{-C_2\left(\frac{\widetilde d(x,B)+\delta}{s^{1/\beta}}\right)^{\beta/(\beta -1)}\right\}\dd s. 
\end{equation}
For convenience, write $a:=C_2\cdot (\widetilde d(x,B)+\delta)^{\beta/(\beta-1)}>0$. Substituting $u:=as^{-1/(\beta-1)}$ into \eqref{eq:310} and noting that 
\[
  s=\left(\frac{a}{u} \right)^{\beta-1},\quad \dd s=(1-\beta)a^{\beta-1}u^{-\beta}\dd u,   
\] 
we obtain
\begin{equation}\label{eq:28-2}
    I(t)=(\beta-1)a^{-1-q}\int_{v_{a,\beta}(t)}^\infty u^q \re^{-u}\dd u,
\end{equation}
where $q:=\alpha-\beta-\frac{\alpha}{\beta}$ and $v_{a,\beta}(t)=at^{-1/(\beta-1)}$. Observe that for $v_{a,\beta}(t) \leq u\leq (1+\rho)v_{a,\beta}(t)$, 
\begin{equation}\label{eq:212}
    u^q \geq \min\{(1+\rho)^q, 1 \}\cdot v_{a,\beta}(t)^q,\quad \re^{-u}\geq \re^{-(1+\rho)v_{a,\beta}(t)}.
\end{equation}
It follows that
\[
    \int_{v_{a,\beta}(t)}^\infty u^q \re^{-u}\dd u\geq \int_{v_{a,\beta}(t)}^{(1+\rho)v_{a,\beta}(t)}u^q \re^{-u}\dd u\geq \min\{(1+\rho)^q, 1 \}\cdot \rho \cdot  v_{a,\beta}(t)^{q+1}\re^{-(1+\rho)v_{a,\beta}(t)}. 
\]
By the definition of $v_{a,\beta}(t)$, a direct computation shows that 
\[
    v_{a,\beta}(t)^{q+1}\re^{-(1+\rho)v_{a,\beta}(t)}=a^{q+1}t^{-\frac{\alpha-\beta}{\beta}}\exp\left\{-(1+\rho)C_2\left( \frac{\widetilde d(x,B)+\delta}{t^{1/\beta}}\right)^{\beta/(\beta-1)}\right\}.
\]
Combining this with \eqref{eq:28-2} and \eqref{eq:212} yields
\begin{equation}\label{eq:315}
    I(t)\geq (\beta-1)\rho \cdot \min\{(1+\rho)^{\alpha-\beta-\frac{\alpha}{\beta}}, 1 \}\cdot t^{-\frac{\alpha-\beta}{\beta}}\exp\left\{-(1+\rho)C_2\left( \frac{\widetilde d(x,B)+\delta}{t^{1/\beta}}\right)^{\beta/(\beta-1)}\right\}.
\end{equation}
Finally, we define the constant
\[
    C_3:=(\beta-1)\rho \cdot \min\{(1+\rho)^{\alpha-\beta-\frac{\alpha}{\beta}}, 1 \}>0,
\]
which depends only on $\alpha,\beta$ and $\rho$. By means of \eqref{eq:27-2}, \eqref{eq:27-3} and \eqref{eq:315}, we can eventually obtain \eqref{eq:215}. This completes the proof. 
\end{proof}

\begin{rmk}\label{RM33}
Under the same assumptions as in Theorem~\ref{THM3.2}, we establish further refinements of the lower bound estimates for $\bP^x[\sigma_B\leq t]$  as follows. Note that all the constants $C_1, C_2$ and $C_3$ below are as in \eqref{eq:215}.
\begin{itemize}
    \item[(1)] Letting $\rho=\delta$ in \eqref{eq:215}, we obtain that for any $\delta>0$, 
    \[
        \bP^x[\sigma_B\leq t]\geq \mathrm{Cap}_{(0)}(B_{x,\delta})\cdot  \frac{C_1C_3}{ t^{(\alpha-\beta)/\beta}}\exp\left\{-(1+\delta)C_2\left( \frac{\widetilde{d}(x,B)+\delta}{t^{1/\beta}}\right)^{\beta/(\beta-1)}\right\},\quad 0<t\leq T.
    \]
\item[(2)] Letting $\delta\downarrow 0$ in \eqref{eq:215}, we obtain that for any $\rho>0$, 
    \[
           \bP^x[\sigma_B\leq t]\geq \mathrm{Cap}_{(0)}(B_{x,0})\cdot  \frac{C_1C_3}{ t^{(\alpha-\beta)/\beta}}\exp\left\{-(1+\rho)C_2\left( \frac{\widetilde{d}(x,B)}{t^{1/\beta}}\right)^{\beta/(\beta-1)}\right\},\quad 0<t\leq T,
    \]
    where $B_{x,0}:=\{y\in B: d(y,x)=\widetilde{d}(x,B)\}$. 
\item[(3)] Taking the limit $\rho\downarrow 0$ in \eqref{eq:215} is not feasible, since $C_3\downarrow 0$ as $\rho\downarrow 0$. Alternatively, we can slightly modify the observation \eqref{eq:212} to eliminate the parameter $\rho$ from the lower bound estimate. This, however, comes at the expense of introducing a new constant, in place of $C_3$,
  that depends on $d(x,B)$. More precisely,  we repeat the arguments used in the proof above up to \eqref{eq:28-2}, and observe that for $v_{a,\beta}(t)\leq u\leq v_{a,\beta}(t)+1$,
 \[
\re^{-u}\geq \re^{-v_{a,\beta}(t)-1},
 \]
 and
 \[
     u^q \geq \min\left\{\left(1+\frac{1}{v_{a,\beta}(T)}\right)^q, 1 \right\}\cdot v_{a,\beta}(t)^q,\quad 0<t\leq T. 
 \]
 This implies that for any $0<t\leq T$, 
 \[
 \begin{aligned}
     I(t)&\geq (\beta-1)a^{-1-q}\int_{v_{a,\beta}(t)}^{v_{a,\beta}(t)+1} u^q \re^{-u}\dd u \\
     &\geq (\beta-1)a^{-1-q}\re^{-1}\min\left\{\left(1+\frac{1}{v_{a,\beta}(T)}\right)^q, 1 \right\}\cdot v_{a,\beta}(t)^q\re^{-v_{a,\beta}(t)}.
 \end{aligned}\]
 Therefore, we obtain that 
 \[
     \bP^x[\sigma_B\leq t]\geq \mathrm{Cap}_{(0)}(B_{x,\delta})\cdot \frac{C'_3}{ t^{\frac{\alpha-\beta}{\beta}-\frac{1}{\beta-1}}}\exp\left\{-C_2\left( \frac{\widetilde{d}(x,B)+\delta}{t^{1/\beta}}\right)^{\beta/(\beta-1)}\right\},\quad 0<t\leq T,
 \]
 where 
 \[
     C'_3=
\frac{C_1}{C_2}(\beta-1)\re^{-1}\cdot \left(\widetilde{d}(x,B)+\delta\right)^{-\beta/(\beta-1)} \cdot  \min\left\{\left(1+\frac{T^{1/(\beta-1)}}{C_2\left(\widetilde{d}(x,B)+\delta\right)^{-\beta/(\beta-1)}}\right)^{\alpha-\beta-\alpha/\beta}, 1 \right\}
 \]
 is a positive constant depending on $T,\alpha,\beta,\delta$ and $\widetilde{d}(x,B)$. 
\end{itemize}
The following remarks concern the capacity appearing in the lower bound estimate \eqref{eq:215}.
\begin{itemize}
    \item[(4)] If the metric $d$ on $E$ satisfies that the closure of every $d$-bounded subset of $E$ is compact (a metric compatible with the topology of $E$ and satisfying this property always exists; see, e.g., \cite[Chapter 0, (3.1)]{BG68}), then the condition that $B$ is of finite $0$-order capacity can be removed. Indeed, $B_{x,\delta}$ is $d$-bounded, so its closure is compact; in particular, $B_{x,\delta}$ is of finite $0$-order capacity.
    \item[(5)] By Lemma~\ref{LM25} and Remark~\ref{RM42}, $\mathrm{Cap}_{(0)}(B_{x,\delta})>0$ for any $\delta>0$ provided that $B$ is not polar. 
It is also worth noting that $\mathrm{Cap}_{(0)}(B_{x,0})$ can possibly be strictly positive. For example, when $(\EE,\FF)$ corresponds to an irreducible and transient diffusion process on $\bR$ and $B=\{y\}$, we have $B_{x,0}=\{y\}$ and $\mathrm{Cap}_{(0)}(B_{x,0})=\mathrm{Cap}_{(0)}(\{y\})>0$ (see, e.g., \cite[Example~2.1.2]{FOT}). 
\end{itemize}
\end{rmk}

By virtue of \eqref{eq:215}, we can further obtain the following lower bound estimates for $\bP^x[\sigma_B\leq t]$. 

\begin{corollary}\label{COR35}
Under the same assumptions as in Theorem~\ref{THM3.2}, the following estimates hold true: for any $\delta,\rho>0$, 
\begin{equation}\label{eq:317}
    \IP^x[\sigma_B\le t]\ge \mathrm{Cap}_{(0)}(B_{x,\delta/2})\cdot \widetilde C_3\cdot \exp\left\{ -C_2 (1+\rho) \left( \frac{\widetilde{d}(x,B)+\delta}{t^{1/\beta}}\right)^{\beta/(\beta -1)}\right\}, \quad \forall 0<t\le T,
\end{equation}
where $\widetilde C_3>0$ is a constant depending on $T,\alpha, \beta, \delta$ and $\rho$. 
\end{corollary}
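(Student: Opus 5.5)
The goal is to remove the polynomial prefactor $t^{-(\alpha-\beta)/\beta}$ from \eqref{eq:215}. That prefactor is harmful when $\alpha<\beta$: it tends to $0$ as $t\downarrow 0$. The plan is to apply Theorem~\ref{THM3.2} with $\delta/2$ in place of $\delta$, and then spend the extra slack $\delta/2$ in the distance, together with the exponent, to absorb the power of $t$. Concretely, \eqref{eq:215} with $\delta/2$ and with some $\rho'>0$ gives, for $0<t\le T$,
\[
\bP^x[\sigma_B\leq t]\geq \mathrm{Cap}_{(0)}(B_{x,\delta/2})\, C_1C_3\, t^{-(\alpha-\beta)/\beta}\exp\left\{-(1+\rho')C_2\left(\frac{\widetilde d(x,B)+\delta/2}{t^{1/\beta}}\right)^{\beta/(\beta-1)}\right\}.
\]
I will take $\rho'=\rho$.

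It then suffices to bound from below the ratio of this right-hand side to the target right-hand side, uniformly in $t\in(0,T]$. Set $D:=\widetilde d(x,B)$, $\gamma:=\beta/(\beta-1)$ and $s:=t^{-1/(\beta-1)}$, so that $\left(\frac{c}{t^{1/\beta}}\right)^{\gamma}=c^\gamma s$. The ratio becomes
\[
C_1C_3\, t^{-(\alpha-\beta)/\beta}\exp\left\{(1+\rho)C_2\left[(D+\delta)^\gamma-(D+\delta/2)^\gamma\right]s\right\}.
\]
Since $\gamma>1$, the bracket is at least $(\delta/2)^\gamma>0$, and this lower bound does not depend on $D$. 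This uniformity is important: $\widetilde C_3$ must not depend on $\widetilde d(x,B)$.

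The two cases are then as follows.
\begin{itemize}
\item If $\alpha\le\beta$, then $t^{-(\alpha-\beta)/\beta}=t^{(\beta-\alpha)/\beta}$ is a nonnegative power of $t$. Rewritten in terms of $s$ it equals $s^{-(\beta-1)(\beta-\alpha)/\beta}$.
\item If $\alpha>\beta$, the factor $t^{-(\alpha-\beta)/\beta}$ is bounded below by $T^{-(\alpha-\beta)/\beta}$ on $(0,T]$, and the exponential factor is at least $1$.
\end{itemize}
In the first case one uses the elementary fact that $s\mapsto s^{-k}\re^{cs}$, with $c=(1+\rho)C_2(\delta/2)^\gamma$ and $k\geq0$, is bounded below by a positive constant on $[T^{-1/(\beta-1)},\infty)$. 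Its infimum over that range can be written explicitly as the minimum of the value at the endpoint and the value $(\re c/k)^k$ at the critical point $s=k/c$.

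Collecting these bounds gives $\widetilde C_3$, depending on $T,\alpha,\beta,\delta,\rho$, together with $C_1$ and $C_2$, which are fixed data from \eqref{SubGaussianHKLB}. The only point needing care is the uniformity in $D$, which the convexity inequality $(D+\delta)^\gamma-(D+\delta/2)^\gamma\ge(\delta/2)^\gamma$ (valid for $\gamma\ge1$) handles. I do not expect any real obstacle beyond this bookkeeping.
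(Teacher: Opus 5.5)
Your proposal is correct and follows essentially the same route as the paper. Both arguments apply \eqref{eq:215} with $\delta/2$ and use the superadditivity $(b+c)^{\beta/(\beta-1)}\ge b^{\beta/(\beta-1)}+c^{\beta/(\beta-1)}$ to extract a factor $\exp\{C_2(1+\rho)(\delta/2)^{\beta/(\beta-1)}t^{-1/(\beta-1)}\}$ that is uniform in $\widetilde d(x,B)$. They then bound its product with $t^{-(\alpha-\beta)/\beta}$ from below on $(0,T]$ by splitting into the monotone case $\alpha\ge\beta$ and the interior-minimum case $\alpha<\beta$; your substitution $s=t^{-1/(\beta-1)}$ is only a reparametrization of the paper's minimization of $f(t)$ and gives the same constant.
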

\begin{proof}
Replacing $\delta$ by $\delta/2$ in \eqref{eq:215}, we have 
\[
    \bP^x[\sigma_B\leq t]\geq \mathrm{Cap}_{(0)}(B_{x,\delta/2})\cdot  \frac{C_1C_3}{ t^{(\alpha-\beta)/\beta}}\exp\left\{-C_2(1+\rho)\left( \frac{\widetilde{d}(x,B)+\delta/2}{t^{1/\beta}}\right)^{\beta/(\beta-1)}\right\}.
\]
Since $\beta/(\beta-1)>1$, applying $(b+c)^{\beta/(\beta-1)}\geq b^{\beta/(\beta-1)}+c^{\beta/(\beta-1)}$  for $b,c\geq 0$ yields
\[
    (\widetilde{d}(x,B)+\delta/2)^{\beta/(\beta-1)}\leq (\widetilde{d}(x,B)+\delta)^{\beta/(\beta-1)}- (\delta/2)^{\beta/(\beta-1)}.
\]
This implies that
\[
\begin{aligned}
   \exp& \left\{-C_2(1+\rho)\left( \frac{\widetilde{d}(x,B)+\delta/2}{t^{1/\beta}}\right)^{\beta/(\beta-1)}\right\} \\
   &\geq \exp\left\{C_2(1+\rho)(\delta/2)^{\beta/(\beta-1)}t^{-1/(\beta-1)}\right\}\cdot \exp\left\{-C_2(1+\rho)\left( \frac{\widetilde{d}(x,B)+\delta}{t^{1/\beta}}\right)^{\beta/(\beta-1)}\right\}. 
\end{aligned}
\]
Define
\[
   f(t):= f_{T,\alpha,\beta,\delta,\rho}(t)=\frac{1}{ t^{(\alpha-\beta)/\beta}}\exp\left\{C_2(1+\rho)(\delta/2)^{\beta/(\beta-1)}t^{-1/(\beta-1)}\right\},\quad t>0. 
\]
If $\alpha\geq \beta$, then $f$ is decreasing in $t$, and 
\[
    f(t)\geq f(T)>0,\quad \forall 0<t\leq T.  
\]
If $\alpha<\beta$, then a standard argument shows that 
\[
    f(t)\geq f(t_{\mathrm{min}})>0,\quad \forall t>0,
\]
where
\[
t_{\mathrm{min}}=\left(\frac{C_2(1+\rho)(\delta/2)^{\beta/(\beta-1)}}{(\beta-1)(\beta-\alpha)/\beta} \right)^{\beta-1}>0. 
\]
Finally, defining the constant
\[
\widetilde C_3:=C_1C_3 \cdot  \min\{f(T), f(t_{\mathrm{min}}) \}>0,
\]
which depends on $T,\alpha,\beta,\delta$ and $\rho$, we arrive at \eqref{eq:317}. This completes the proof. 
\end{proof}

\begin{rmk}\label{RM36}
\begin{itemize}
\item[(1)] Letting $\rho=\delta$ in \eqref{eq:317}, we obtain that for any $\delta>0$,
\[
\IP^x[\sigma_B\le t]\ge \mathrm{Cap}_{(0)}(B_{x,\delta/2})\cdot \widetilde{C}'_3\cdot \exp\left\{ -C_2 (1+\delta) \left( \frac{\widetilde{d}(x,B)+\delta}{t^{1/\beta}}\right)^{\beta/(\beta -1)}\right\}, \quad \forall 0<t\le T,
\]
where $\widetilde{C}'_3>0$ is a constant depending on $T,\alpha,\beta$ and $\delta$. 
\item[(2)] Mimicking the arguments in Remark~\ref{RM33}~(3), we can also eliminate the parameter $\rho$ from \eqref{eq:317} and obtain that for any $\delta>0$,
\[
      \IP^x[\sigma_B\le t]\ge \mathrm{Cap}_{(0)}(B_{x,\delta/2})\cdot \widetilde C''_3\cdot \exp\left\{ -C_2 \left( \frac{\widetilde{d}(x,B)+\delta}{t^{1/\beta}}\right)^{\beta/(\beta -1)}\right\}, \quad \forall 0<t\le T,
\]
where $\widetilde C''_3>0$ is a constant depending on $T,\alpha, \beta, \delta$ and $\widetilde{d}(x,B)$.
\end{itemize} 
\end{rmk}

\subsection{General case}\label{S:LBHT-general}

We now proceed to remove the transient assumption imposed in the previous subsection. 

\begin{lem}\label{LM41}
Let $X$ be a Borel right process on $(E,d)$ satisfying (AC), (H1a) and (H1b).
Let $B\subset E$ be a nearly Borel set of finite $1$-order capacity. Then for any $\lambda>0$, there exists a unique finite, positive measure $\nu^{(\lambda)}_B$ on $E$ with $\mathrm{supp}[\nu_B^{(\lambda)}]\subset \overline{B}$ such that
\begin{equation}\label{eq:42}
     p_B^{(\lambda)}(x):=\bE^x[\re^{-\lambda \sigma_B}]=\int_E u_\lambda(x,y)\nu_B^{(\lambda)}(\dd y),\quad \forall x\in E.
 \end{equation}
In particular, for any $\lambda>0$, 
\[
\IP^x\left[\sigma_B\le t\right] \ge \emph{Cap}_{(\lambda)}(B)  \inf_{y\in \overline B} \int_0^t \re^{-\lambda s} p(s, x, y)\dd s,\quad \forall t>0,x\in E.
\]
\end{lem}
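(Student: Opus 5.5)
The argument follows the transient case, Lemma~\ref{L:1.1}, with $\lambda$-order objects in place of $0$-order ones. The $\lambda$-order form $\EE_\lambda$ is always "transient", so no transience hypothesis is needed.

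\emph{Step 1: representation of $p^{(\lambda)}_B$.} Set $p_B^{(\lambda)}(x):=\bE^x[\re^{-\lambda\sigma_B}]$. By \cite[Theorem 4.3.1]{FOT}, $p^{(\lambda)}_B$ is a quasi-continuous version of the $\lambda$-order equilibrium potential $e^{(\lambda)}_B\in\FF$, which exists because $\mathrm{Cap}_{(\lambda)}(B)\le \max\{1,\lambda\}\,\mathrm{Cap}(B)<\infty$. By \cite[Theorem 2.1.5]{FOT}, $\EE_\lambda(p^{(\lambda)}_B,g)\ge 0$ for all $g\in\FF$ with $g\ge0$ q.e.\ on $B$. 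Lemmas 2.2.6 and 2.2.10 of \cite{FOT} then give a unique finite positive measure $\nu^{(\lambda)}_B$ with $\mathrm{supp}[\nu^{(\lambda)}_B]\subset\overline B$ such that
\[
\EE_\lambda(p^{(\lambda)}_B,g)=\int g\,\dd\nu^{(\lambda)}_B,\qquad \forall g\in\FF .
\]

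\emph{Step 2: pointwise identity \eqref{eq:42}.} For $f\in \bb\sB(E)\cap L^2(E,\mu)$ with $f\ge0$, we have $U_\lambda f\in\FF$ and $\EE_\lambda(U_\lambda f,v)=(f,v)_\mu$. Taking $v=p^{(\lambda)}_B$ and using symmetry $u_\lambda(x,y)=u_\lambda(y,x)$ gives
\[
p^{(\lambda)}_B=\int u_\lambda(\cdot,y)\,\nu^{(\lambda)}_B(\dd y)\qquad \mu\text{-a.e.}
\]
No reference-function truncation is needed here, since $L^2$ test functions suffice. Both sides are $\lambda$-excessive, so they agree everywhere by \cite[Theorem~A.2.17]{CF}.

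\emph{Step 3: the hitting time bound.} By the Markov property, as in \eqref{eq:36},
\[
\bE^x[\re^{-\lambda\sigma_B};t<\sigma_B]=\re^{-\lambda t}\,\bE^x[p^{(\lambda)}_B(X_t);t<\sigma_B]\le \re^{-\lambda t}P_tp^{(\lambda)}_B(x).
\]
Fubini and Chapman--Kolmogorov give
\[
\re^{-\lambda t}P_tp^{(\lambda)}_B(x)=\int\nu^{(\lambda)}_B(\dd z)\int_t^\infty \re^{-\lambda s}p(s,x,z)\,\dd s .
\]
Since $\re^{-\lambda\sigma_B}\le 1$,
\[
\bP^x[\sigma_B\le t]\ge \bE^x[\re^{-\lambda\sigma_B};\sigma_B\le t]=p^{(\lambda)}_B(x)-\bE^x[\re^{-\lambda\sigma_B};t<\sigma_B]\ge\int\nu^{(\lambda)}_B(\dd z)\int_0^t \re^{-\lambda s}p(s,x,z)\,\dd s .
\]
This is at least $\nu^{(\lambda)}_B(\overline B)\inf_{z\in\overline B}\int_0^t\re^{-\lambda s}p(s,x,z)\,\dd s$. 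Finally,
\[
\nu^{(\lambda)}_B(\overline B)=\int p^{(\lambda)}_B\,\dd\nu^{(\lambda)}_B=\EE_\lambda(p^{(\lambda)}_B,p^{(\lambda)}_B)=\mathrm{Cap}_{(\lambda)}(B),
\]
where the first equality uses $p^{(\lambda)}_B=1$ q.e.\ on $B$, hence $\nu^{(\lambda)}_B$-a.e.\ because $\nu^{(\lambda)}_B$ charges no polar set.

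\emph{Main obstacle.} The delicate step is Step~1 together with this last identification: making sure the support of $\nu^{(\lambda)}_B$ lies in $\overline B$ and that $p^{(\lambda)}_B=1$ $\nu^{(\lambda)}_B$-a.e., for a merely nearly Borel $B$. I would handle this by approximating $B$ from outside by open sets, as in the proof of \cite[Theorem 4.3.3]{FOT}, and using that irregular points of $B$ form a polar set under (H1a) and (AC).
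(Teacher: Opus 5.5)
This is essentially the paper's proof. The paper likewise obtains $\nu^{(\lambda)}_B$ by rerunning the equilibrium-measure argument of Lemma~\ref{L:1.1} at order $\lambda$, and then combines $\bP^x[\sigma_B\le t]\ge\bE^x[\re^{-\lambda\sigma_B};\sigma_B\le t]$, the Markov property, Fubini/Chapman--Kolmogorov and $\nu^{(\lambda)}_B(\overline B)=\mathrm{Cap}_{(\lambda)}(B)$, exactly as you do; your use of $L^2$ test functions in place of the reference-function truncation is a harmless simplification for $\lambda>0$.

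The only soft spot is your reason for $\int p^{(\lambda)}_B\,\dd\nu^{(\lambda)}_B=\nu^{(\lambda)}_B(\overline B)$, which the paper simply asserts. The fact that $p^{(\lambda)}_B=1$ q.e.\ on $B$ says nothing about $\overline B\setminus B$, which $\nu^{(\lambda)}_B$ does charge in general (for Brownian motion and an open ball it charges the boundary sphere), and outer approximation by open sets runs into the same issue. It is easily fixed: $\EE_\lambda(p^{(\lambda)}_B,v)=0$ for every $v\in\FF$ vanishing q.e.\ on $B$, so apply this to $v=w-(w\wedge p^{(\lambda)}_B)$ with $w\in\FF\cap C_c(E)$, $0\le w\le 1$, $w=1$ on compacts $K_n\uparrow E$, and let $n\to\infty$.
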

\begin{proof}
 Fix $\lambda>0$.
Mimicking the argument in the proof of Lemma~\ref{L:1.1}, we obtain a positive Radon measure $\nu_B^{(\lambda)}$ with $\mathrm{supp}[\nu_B^{(\lambda)}]\subset \overline{B}$ such that $p_B^{(\lambda)}=U_\lambda \nu_B^{(\lambda)}$, $\mu$-a.e., where $U_\lambda \nu_B^{(\lambda)}$ is the $\lambda$-order potential of $\nu_B^{(\lambda)}$. As in the proof of Lemma~\ref{L:1.1}, we can similarly demonstrate that $\nu^{(\lambda)}_B$ is the desired measure.

    Note that for all $x\in E$,
    \begin{equation}\label{eq:44}
        \bP^x\left[\sigma_B\leq t \right]\geq \bE^x\left[\re^{-\lambda \sigma_B};\sigma_B\leq t \right]=\bE^x[\re^{-\lambda \sigma_B}]-\bE^x[\re^{-\lambda \sigma_B};\sigma_B>t]. 
    \end{equation}
Mimicking \eqref{eq:36} and using the Markov property, we have
 \begin{equation}\label{eq:43}
     \bE^x[\re^{-\lambda \sigma_B};\sigma_B>t]=\re^{-\lambda t}\bE^x \left[p_B^{(\lambda)}(X_t);\sigma_B> t \right]\leq \re^{-\lambda t}\int_E p(t, x,y)p_B^{(\lambda)}(y)\mu(\dd y). 
 \end{equation}
Substituting \eqref{potential-kernel} into \eqref{eq:42}, then \eqref{eq:42} into \eqref{eq:43} and applying Fubini's theorem, we get
 \[
 \begin{aligned}
      \bE^x[\re^{-\lambda \sigma_B};\sigma_B>t]&\leq \int_E \re^{-\lambda t}p(t,x,y)\mu(\dd y)\int_E \int_0^\infty \re^{-\lambda s}p(s, y,z)\dd s\,\nu_B^{(\lambda)}(\dd z) \\
      &=\int_E \nu_B^{(\lambda)}(\dd z)\int_0^\infty \re^{-\lambda(t+s)}\dd s\int_E p(t, x,y)p(s, y,z)\mu(\dd y) \\
      &=\int_E \nu_B^{(\lambda)}(\dd z)\int_0^\infty \re^{-\lambda(t+s)}p(t+s, x,z)\dd s \\
      &=\int_E \nu_B^{(\lambda)}(\dd z)\int_t^\infty \re^{-\lambda s}p(s,x,z)\dd s.
 \end{aligned}\]
 It then follows from \eqref{eq:44} and $\mathrm{supp}[\nu_B^{(\lambda)}]\subset \overline{B}$ that 
 \[
     \bP^x[\sigma_B\leq t]\geq \int_{\overline{B}}\nu_B^{(\lambda)}(\dd y)\int_0^t \re^{-\lambda s}p(s, x,y)\dd s\geq \nu_B^{(\lambda)}(\overline{B})\inf_{y\in \overline{B}}\int_0^t \re^{-\lambda s}p(s,x,y)\dd s. 
 \]
 Finally, it suffices to note that $\nu_B^{(\lambda)}(\overline{B})=\mathrm{Cap}_{(\lambda)}(B)$, which completes the proof.
\end{proof}
\begin{rmk}
As in the transient case (see Remark~\ref{RM42}), $\nu^{(\lambda)}_B(\overline{B})=\mathrm{Cap}_{(\lambda)}(B)>0$ if and only if $B$ is not polar. 
\end{rmk}

By utilizing Lemma~\ref{LM41} in place of Lemma~\ref{L:1.1}, we readily obtain the following result. 

\begin{thm}\label{THM:4.2}
Let $X$ be a Borel right process on $(E,d)$ satisfying (AC), (H1a) and (H1b).
 Further assume that $X$ admits the following short-time heat kernel lower bound at a fixed point $x\in E$: For some $T>0, \alpha>0,\beta>1$, there exist constants $C_1,C_2>0$ such that
\begin{equation}\label{eq:421}
p(t,x,y)\ge \frac{C_1}{t^{\alpha/\beta}}\exp\left\{-C_2\left(\frac{d(x,y)}{t^{1/\beta}}\right)^{\beta/(\beta -1)}\right\}, \quad \forall t\in (0, T], \, y\in E.
\end{equation}
Let $B\subset E$ be a nearly Borel set of finite $1$-order capacity.
Then for any $\lambda, \delta,\rho>0$, 
 \begin{equation}\label{eq:45}
        \bP^x[\sigma_B\leq t]\geq \frac{\mathrm{Cap}_{(\lambda)}(B_{x,\delta})}{\re^{\lambda T}}\cdot  \frac{C_1C_3}{ t^{(\alpha-\beta)/\beta}}\exp\left\{-(1+\rho)C_2\left( \frac{\widetilde{d}(x,B)+\delta}{t^{1/\beta}}\right)^{\beta/(\beta-1)}\right\},\quad 0<t\leq T,
    \end{equation}
    where $B_{x,\delta}:=\{y\in B:d(y,x)\leq \widetilde{d}(x,B)+\delta\}$, $C_1$ and $C_2$ are as in \eqref{eq:421}, and $C_3=(\beta-1)\rho\cdot \min\{(1+\rho)^{\alpha-\beta-\alpha/\beta}, 1\}>0$ is a constant depending on $\alpha, \beta$ and $\rho$.
\end{thm}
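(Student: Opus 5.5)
The plan is to repeat the proof of Theorem~\ref{THM3.2}, with Lemma~\ref{LM41} taking the place of Lemma~\ref{L:1.1}. Fix $\lambda,\delta,\rho>0$ and $0<t\leq T$. Since $B_{x,\delta}\subset B$, we have $\sigma_B\leq\sigma_{B_{x,\delta}}$, and $B_{x,\delta}$ is a nearly Borel set of finite $1$-order capacity. It is nearly Borel because it is the intersection of $B$ with a closed ball. It has finite $1$-order capacity by monotonicity of capacity. Applying Lemma~\ref{LM41} to $B_{x,\delta}$ gives
\[
\bP^x[\sigma_B\leq t]\geq \bP^x[\sigma_{B_{x,\delta}}\leq t]\geq \mathrm{Cap}_{(\lambda)}(B_{x,\delta})\inf_{y\in \overline{B}_{x,\delta}}\int_0^t \re^{-\lambda s}p(s,x,y)\dd s .
\]

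Next, since $s\leq t\leq T$, we use $\re^{-\lambda s}\geq \re^{-\lambda T}$ to pull the factor $\re^{-\lambda T}$ out of the integral. Every $y\in\overline{B}_{x,\delta}$ is a limit of points $y_n\in B_{x,\delta}$, so $d(x,y)\leq \widetilde d(x,B)+\delta$. The heat kernel lower bound \eqref{eq:421} then yields, for $0<s\leq t$,
\[
p(s,x,y)\geq C_1 s^{-\alpha/\beta}\exp\Bigl\{-C_2\bigl((\widetilde d(x,B)+\delta)/s^{1/\beta}\bigr)^{\beta/(\beta-1)}\Bigr\},
\]
exactly as in \eqref{eq:27-3}. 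Hence the infimum is bounded below by $\re^{-\lambda T}C_1 I(t)$, where $I(t)$ is the integral defined in \eqref{eq:310}.

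For $I(t)$ itself, we reuse the bound \eqref{eq:315} already proved in Theorem~\ref{THM3.2}. That computation is purely a one-variable calculation: the substitution $u=as^{-1/(\beta-1)}$ followed by restricting the $u$-integral to $[v,(1+\rho)v]$. It gives $I(t)\geq C_3 t^{-(\alpha-\beta)/\beta}\exp\{-(1+\rho)C_2((\widetilde d(x,B)+\delta)/t^{1/\beta})^{\beta/(\beta-1)}\}$, with the same $C_3$. Combining the three inequalities gives \eqref{eq:45}.

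I expect no real obstacle. The only points needing care are that $\widetilde d(x,B)<\infty$, which fails only when $B$ is polar, and in that case the bound is trivial because $\mathrm{Cap}_{(\lambda)}(B_{x,\delta})=0$. The other point is that the factor $\re^{-\lambda s}$ is bounded uniformly over $s\leq T$, and this is what produces the $\re^{\lambda T}$ in the denominator.
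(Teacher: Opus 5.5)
Your proposal is correct and follows the paper's proof essentially verbatim. You apply Lemma~\ref{LM41} to $B_{x,\delta}$, bound $\re^{-\lambda s}\geq \re^{-\lambda T}$ for $s\leq T$, and then repeat the heat-kernel lower bound on $\overline{B}_{x,\delta}$ and the one-variable estimate \eqref{eq:315} from Theorem~\ref{THM3.2}.
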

\begin{proof}
    Mimicking \eqref{eq:27-2}, we  obtain that for any $0<t\leq T$, 
    \[
        \bP^x[\sigma_B\leq t]\geq \mathrm{Cap}_{(\lambda)}(B_{x,\delta}) \inf_{y\in \overline B_{x,\delta}} \int_0^t \re^{-\lambda s} p(s, x, y)\dd s\geq \frac{\mathrm{Cap}_{(\lambda)}(B_{x,\delta})}{\re^{\lambda T}} \inf_{y\in \overline B_{x,\delta}} \int_0^t  p(s, x, y)\dd s. 
    \]
    Then repeating the argument in the proof of Theorem~\ref{THM3.2} can complete the current proof. 
\end{proof}

It is worth pointing out that all the refinements stated in Remark~\ref{RM33}, Corollary~\ref{COR35} and Remark~\ref{RM36}, with $\mathrm{Cap}_{(\lambda)}(B_{x,\delta})/\re^{\lambda T}$ in place of $\mathrm{Cap}_{(0)}(B_{x,\delta})$, remain valid for the estimate \eqref{eq:45}.  

\section{Refinements of the symmetry assumption for lower bound estimates}\label{SEC5}


In the analysis carried out in Section~\ref{SEC4}, the symmetry assumption (H1a), together with the regularity assumption (H1b), is invoked essentially only once, namely, in establishing the integral representation \eqref{eq:42} for hitting probabilities.
We will consider several commonly used alternative settings that can replace  the symmetry assumption. 

To proceed, we introduce some notation and terminology. For $\lambda\geq 0$, $f\in \pp \sB(E)$ and $B\in \sB^n(E)$, define for all $x\in E$,
\[
U_\lambda f(x):=\int_E u_\lambda(x,y)f(y)\mu(\dd y),
\]
and
\[
    P_Bf(x):=\bE^x\left[f(X_{\sigma_B})\right],\quad p^{(\lambda)}_B(x):=\bE^x\left[\ee^{-\lambda \sigma_B};\sigma_B<\infty\right].
\]
Note that $p^{(\lambda)}_B$ is $\lambda$-excessive with respect to $X$.
 For any positive measure $\nu$ on $E$, define the measure $\nu U_\lambda$ by $\nu U_\lambda(B):=\int_E U_\lambda 1_B(x)\nu(\dd x)$.  For convenience, write $Uf:=U_0f, p_B:=p^{(0)}_B$ and $\nu U:=\nu U_0$. 
The Borel right process $X$ is called \emph{transient} if there exists a strictly positive function $h\in \sB^*(E)$ such that $Uh(x)<\infty$ for all $x\in E$. 

\subsection{Refinement with duality}\label{subsection:5.1}

Now we consider a  more general setting arises where $X$ admits a dual process with respect to $\mu$:
\begin{itemize}
    \item[(H2a)] There exists another Borel right process $\widehat{X}$ on $E$, with transition function $(\widehat{P}_t)_{t\geq 0}$, that satisfies (AC) with respect to $\mu$ and 
 \[
     \int_E P_t f(x)g(x)\mu(\dd x)=\int_E f(x)\widehat P_t g(x)\mu(\dd x),\quad \forall f,g\in \bb\sB(E). 
 \]
\end{itemize}
Clearly, (H1a) implies (H2a) with $\widehat{X}=X$. Under the duality assumption (H2a),  $\widehat P_t$ admits the transition density function
\[
    \widehat{p}(t,x,y)=p(t,y,x),\quad \forall t>0,x,y\in E, 
\]
and the nearly Borel $\sigma$-algebra with respect to $\widehat{X}$ coincides with $\sB^n(E)$, the nearly Borel $\sigma$-algebra with respect to $X$ (see \cite[Chapter VI, (1.20)]{BG68}).
In addition, in place of the regularity condition (H1b), we need the following condition:
\begin{itemize}
    \item[(H2b)] $(E,d)$ is a locally compact separable metric space, $\mu$ is a Radon measure on $E$; and for $\lambda>0$ ($\lambda\geq 0$ if $X$ is transient) and $f\in C_c(E)$, 
    \begin{equation}\label{eq:51-2}
        y\mapsto \int_E u_\lambda(x,y)f(x)\mu(\dd x)
    \end{equation}
    is continuous on $E$. 
\end{itemize}
Note that if $\widehat{P}_t(C_c(E))\subset C(E)$ for all $t>0$, then the second condition in (H2b) automatically holds for $\lambda>0$; that is, \eqref{eq:51-2} is continuous. L\'evy processes on Euclidean spaces provide a typical class of processes satisfying both (H2a) and (H2b). Indeed, for any L\'evy process $X$, its dual process with respect to Lebesgue measure is given by $-X$, which is itself a L\'evy process. Consequently, $-X$ is a Feller process and, in particular, satisfies (H2b).


\begin{lem}\label{LM51}
Assume that the Borel right process $X$ satisfies  (AC), (H2a) and (H2b). Let $B\in \sB^n(E)$ be such that $\overline{B}$ is compact in $E$. Then for any $\lambda>0$ ($\lambda\geq 0$ if $X$ is transient), there exists a unique positive  measure $\nu^{(\lambda)}_B$ on $E$ such that 
\[
   p_B^{(\lambda)}(x)
   =\int_E u_\lambda(x,y)\nu^{(\lambda)}_B(\dd y),\quad \forall x\in E. 
\]
Moreover, $\nu^{(\lambda)}_B$ is finite and concentrated on $\overline{B}$. 
\end{lem}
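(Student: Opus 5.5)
The plan is to realize $p_B^{(\lambda)}$ as an increasing limit of $\lambda$-potentials of absolutely continuous measures, and then pass to a limit of the measures. Duality is used to control their masses, and the continuity in (H2b) to identify the limit. Throughout, fix $\lambda>0$ (or $\lambda\geq 0$ when $X$ is transient), set $K:=\overline{B}$, which is compact by hypothesis, and write $p:=p_B^{(\lambda)}$.

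\emph{Step 1 (approximation).} Since $p$ is $\lambda$-excessive and bounded by $1$, the resolvent equation gives, for $n\geq 1$,
\[
nU_{\lambda+n}p=U_\lambda g_n,\qquad g_n:=n\bigl(p-nU_{\lambda+n}p\bigr)\geq 0 .
\]
Moreover $nU_{\lambda+n}p\uparrow p$ pointwise. Put $\nu_n:=g_n\,\mu$. Since $p$ is a hitting functional of $B$, the strong Markov property at $\sigma_B$ shows that $p$ is $\lambda$-harmonic off $K$. From this I expect $g_n\to 0$ locally uniformly away from $K$, or at least $\nu_n(E\setminus V)\to 0$ for every open neighbourhood $V\supset K$. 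I would try to prove this by writing
\[
g_n(x)=n\,\bE^x\!\left[\int_0^\infty n\re^{-(\lambda+n)s}\bigl(p(x)-p(X_s)\bigr)\dd s\right]
\]
and observing that, before the path reaches $K$, $\re^{-\lambda s}p(X_s)$ is a martingale.

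\emph{Step 2 (mass bound via duality).} For $f\in \pp C_c(E)$, (H2a) gives
\[
\int_E \widehat U_\lambda f\,\dd\nu_n=\int_E f\,U_\lambda g_n\,\dd\mu\leq \int_E f\,\dd\mu ,
\]
where $\widehat U_\lambda f(y)=\int_E u_\lambda(x,y)f(x)\mu(\dd x)$ is continuous by (H2b). I would choose $f$ so that $\widehat U_\lambda f>0$ on $K$. Since $\widehat U_\lambda f$ is an excessive function of $\widehat X$, one can take $f>0$ on a large compact set; the points where $\widehat U_\lambda f$ vanishes are then traps for $\widehat X$ that cannot be charged by $\nu_n$. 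By continuity and compactness, $\widehat U_\lambda f\geq c>0$ on a compact neighbourhood $V$ of $K$. Combined with Step 1, this yields $\sup_n\nu_n(E)<\infty$ and tightness.

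\emph{Step 3 (passage to the limit).} Extract a weakly convergent subsequence $\nu_n\to\nu$. The limit $\nu$ is finite and concentrated on $K$, because of the vanishing mass off every neighbourhood of $K$. Continuity of $\widehat U_\lambda f$ for $f\in C_c(E)$ gives
\[
\int_E f\,U_\lambda\nu\,\dd\mu=\lim_n\int_E \widehat U_\lambda f\,\dd\nu_n=\lim_n\int_E f\,nU_{\lambda+n}p\,\dd\mu=\int_E fp\,\dd\mu .
\]
Hence $p=U_\lambda\nu$ $\mu$-a.e. Both sides are $\lambda$-excessive, and $\mu$ is a reference measure by (AC), so they agree everywhere, exactly as at the end of the proof of Lemma~\ref{L:1.1}.

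\emph{Uniqueness.} If $U_\lambda\nu=U_\lambda\nu'$, then $\int\widehat U_\lambda f\,\dd\nu=\int\widehat U_\lambda f\,\dd\nu'$ for all $f\in C_c(E)$. Applying the same identity with $\lambda+\alpha$ in place of $\lambda$ via the resolvent equation, and using $\alpha\widehat U_{\lambda+\alpha}f\to f$ as $\alpha\to\infty$ (or the standard fact that potentials of finite measures determine the measure under duality, \cite[Chapter VI]{BG68}), gives $\nu=\nu'$.

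The main obstacle is Step 1 together with the positivity in Step 2. One must show that the masses $\nu_n$ do not escape to infinity or accumulate off $K$, and that $\widehat U_\lambda f$ is bounded below on a neighbourhood of $K$. If the direct estimate of Step 1 is awkward, I would instead invoke the Hunt balayage theory of \cite[Chapter VI]{BG68}: there $P_K^\lambda(x,\cdot)$ and its dual give $\nu$ as the co-balayage $\nu(\dd y)=\lim_\alpha \alpha\,(\mu\text{-restricted})\,\widehat P^\lambda_K$.
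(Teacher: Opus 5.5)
Your existence argument has a genuine gap at Step~1. You only conjecture that step, and the rest of the proof depends on it.

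\textbf{Why Step~1 carries everything.} Your approximants $\nu_n=g_n\mu$ are spread over all of $E$. So three things hinge on $\nu_n(E\setminus V)\to0$:
\begin{itemize}
\item the bound $\sup_n\nu_n(E)<\infty$;
\item tightness, which you need to pass to the limit in $\int\widehat U_\lambda f\,\dd\nu_n$, since $\widehat U_\lambda f$ is continuous but not compactly supported;
\item the fact that the limit is carried by $K$.
\end{itemize}

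\textbf{Step~2 cannot supply it.} The duality bound controls $\nu_n$ only where $\widehat U_\lambda f$ is bounded below, i.e.\ on a compact neighbourhood of $K$. Far away $\widehat U_\lambda f$ decays, so mass may escape to infinity. For the same reason, even locally uniform convergence $g_n\to0$ off $K$ would not suffice when $\mu(E)=\infty$.

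\textbf{The martingale idea does not close it either.} Your display drops the term $\frac{n\lambda}{n+\lambda}p(x)$; the correct formula is $g_n(x)=n\int_0^\infty n\re^{-ns}\bigl(p(x)-\re^{-\lambda s}P_sp(x)\bigr)\dd s$. Even with it, the martingale property of $\re^{-\lambda s}p(X_s)$ before $\sigma_B$ yields only $0\le g_n(x)\le n\bE^x[\re^{-n\sigma_B}]$. That bound stays of order one for processes that can jump into $B$.

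The claim is in fact true a posteriori: once $p=U_\lambda\nu$, one has $g_n=nU_{\lambda+n}\nu$ and $\nu_n(E\setminus V)=\int n\widehat U_{\lambda+n}\mathbf 1_{E\setminus V}\,\dd\nu\to0$. But that is circular.

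\textbf{A further point at $\lambda=0$.} The identity $Ug_n=nU_np$ holds only if $P_tp\to0$ as $t\to\infty$, and you do not address this.

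\textbf{The missing idea.} Make the approximants live on $\overline B$ from the outset by sweeping with the dual process. This is what the paper does in Lemma~\ref{LMB1}, for $\lambda=0$ and $X$ transient.
\begin{itemize}
\item Approximate $\mathbf 1_E$ by potentials $Ug_n\uparrow\mathbf 1_E$, and set $\nu_n(\cdot)=\int_E g_n(x)\,\widehat{\bP}^x[\widehat X_{\widehat\sigma_B}\in\cdot\,]\,\mu(\dd x)$. This measure is carried by $B\cup{}^rB\subset\overline B$.
\item The switching identity \eqref{eq:B1-1} gives $U\nu_n=P_B(Ug_n)\uparrow P_B\mathbf 1_E=p_B$.
\item Your Step~2 estimate is now needed only on $\overline B$, where it works. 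Positivity of $\widehat Uf$ near $\overline B$ is just right continuity of $\widehat X$ at time $0$ when $f\equiv1$ there; no discussion of traps is needed.
\item Compactness of $\overline B$ gives a weakly convergent subsequence with no tightness issue, and your Step~3 identification then goes through verbatim.
\item The case $\lambda>0$ reduces to $\lambda=0$ by applying this to the killed process $X^{(\lambda)}$. It is transient, has potential density $u_\lambda$, and hits $B$ with probability $p^{(\lambda)}_B$. For $X^{(\lambda)}$ the explicit choice $g_n=n(\mathbf 1_E-nU_{\lambda+n}\mathbf 1_E)$ works; at $\lambda=0$ one needs a genuine approximating sequence of potentials, which transience provides.
\end{itemize}
Your closing remark about co-balayage points in this direction, but it supplies neither the switching identity nor the argument. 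So existence is not proved as it stands.

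\textbf{Uniqueness.} Your uniqueness argument is fine. For a competitor $\nu'$, Fatou first gives $\nu'\le\nu$, and then your dominated-convergence argument applies. The paper simply cites \cite[Chapter~VI, (1.15)]{BG68}.
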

\begin{proof}
The uniqueness follows from \cite[Chapter VI, (1.15)]{BG68} and relies  only on assumption (H2a). To establish existence, 
    we first consider the case $\lambda=0$ under the assumption that $X$ is transient. 
 The desired conclusion is essentially a modification of 
 \cite[Chapter V, Theorem 2.8]{BG68}, and we defer the proof details to Lemma~\ref{LMB1}. 

    For $\lambda>0$, we consider the subprocess $X^{(\lambda)}$ of $X$ with transition function $(\ee^{-\lambda t}P_t)_{t\geq 0}$, whose potential kernel is $u_\lambda(x,y)$. Let $(k_t)_{t\geq 0}$ be the killing operators of $X$, as defined in \cite[Definition (11.3)]{S88}, and define $m_t:=\ee^{-\lambda t}1_{[0,\zeta)}(t)$ for $t\geq 0$, where $\zeta$ is the lifetime of $X$.  According to \cite[Theorem (61.5)]{S88}, $X^{(\lambda)}$ can be realized as
    \[
        X^{(\lambda)}=(\Omega, \hat\sF, \hat\sF_t,X_t,\theta_t,\hat{\bP}^x)
    \]
    with state space $E$, where $\Omega, X_t,\theta_t$ are as in \eqref{eq:21-2}, and $\hat{\bP}^x$ satisfies 
    \begin{equation}\label{eq:52}
        \hat{\bE}^x[H]=\bE^x \left[\int_0^\infty H\circ k_t\, (-\dd m_t)\right], \quad \forall H\in \bb \sF^*,
    \end{equation}
    where $\sF^*$ is the universal completion of $\sF^0:=\sigma(X_t:t\geq 0)$; see \cite[(61.2)]{S88}. Note that $\{\sigma_B<\infty\}\in \sF^*$; see \cite[page 52]{S88}. Taking $H=\mathbf{1}_{\{\sigma_B<\infty\}}=\mathbf{1}_{\{\sigma_B<\zeta\}}$ in \eqref{eq:52} and noting that $H\circ k_t(\omega)=\mathbf{1}_{\{\sigma<t\}}$, we obtain
    \begin{equation}\label{eq:54}
    \begin{aligned}
    \hat{\bP}^x\left[\sigma_B<\infty\right]&=\bE^x \left[\int_{(0,\zeta)}\mathbf{1}_{\{\sigma<t\}} (-\dd (\ee^{-\lambda t})\right]+\bE^x\left[\ee^{-\lambda \zeta};\sigma_B<\zeta \right] \\
    &=\bE^x\left[\ee^{-\lambda \sigma_B};\sigma_B<\infty \right]. 
    \end{aligned}
    \end{equation}
    Clearly, $X^{(\lambda)}$ is transient. Applying the argument of the previous paragraph to $\hat{\bP}^x[\sigma_B<\infty]$ with respect to $X^{(\lambda)}$, we obtain the desired measure $\nu^{(\lambda)}_B$. This completes the proof. 
\end{proof}

Under the current setting, we can establish lower bound estimates for the hitting time distribution analogous to those in Theorems \ref{THM3.2} and \ref{THM:4.2}. The proofs in Section~\ref{SEC4} remain valid with only one modification: the term $\mathrm{Cap}_{(\lambda)}(B_{x,\delta})$ should be replaced by $\nu_B^{(\lambda)}(\overline{B}_{x,\delta})$.
In this setting, the assumption that $B$ has finite capacity is replaced by the compactness of $\overline{B}$, which ensures that $\nu_B^{(\lambda)}(\overline{B}_{x,\delta})$ is finite. This compactness assumption can be further removed if every $d$-bounded closed subset of $E$ is compact; see Remark~\ref{RM33}~(4).


    \begin{thm}\label{THM52}
Let $X$ be a Borel right process on $(E,d)$ that satisfies (AC), (H2a) and (H2b).
 Further assume that $X$ admits the following short-time heat kernel lower bound at a fixed point $x\in E$: For some $T>0, \alpha>0,\beta>1$, there exist constants $C_1,C_2>0$ such that
\begin{equation}\label{eq:55}
p(t,x,y)\ge \frac{C_1}{t^{\alpha/\beta}}\exp\left\{-C_2\left(\frac{d(x,y)}{t^{1/\beta}}\right)^{\beta/(\beta -1)}\right\}, \quad \forall t\in (0, T], \, y\in E.
\end{equation}
Let $B\subset E$ be a nearly Borel set with compact closure $\overline{B}$. 
Then for any $\delta,\rho>0$ and any $\lambda>0$ ($\lambda\geq 0$ if $X$ is transient),
 \[
        \bP^x[\sigma_B\leq t]\geq \frac{\nu^{(\lambda)}_{B_{x,\delta}}(\overline{B}_{x,\delta})}{\re^{\lambda T}}\cdot \frac{C_1C_3}{ t^{(\alpha-\beta)/\beta}}\exp\left\{-(1+\rho)C_2\left( \frac{\widetilde{d}(x,B)+\delta}{t^{1/\beta}}\right)^{\beta/(\beta-1)}\right\},\quad 0<t\leq T,
    \]
    where  $B_{x,\delta}:=\{y\in B:d(y,x)\leq \widetilde{d}(x,B)+\delta\}$, $\nu^{(\lambda)}_{B_{x,\delta}}$ is the measure obtained in Lemma~\ref{LM51}, $C_1$ and $C_2$ are as in \eqref{eq:55}, and $C_3=(\beta-1)\rho\cdot \min\{(1+\rho)^{\alpha-\beta-\alpha/\beta}, 1\}>0$ is a constant depending on $\alpha, \beta,\rho$.
\end{thm}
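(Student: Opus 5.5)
The plan is to follow the argument of Lemma~\ref{LM41} and Theorem~\ref{THM:4.2}, with the capacitary measure replaced by the measure from Lemma~\ref{LM51}.

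\emph{Step 1: reduce to the localized set.} Fix $\delta,\rho>0$ and an admissible $\lambda$. Set $A:=B_{x,\delta}$. Since $A\subset B$, we have $\sigma_B\le\sigma_A$, so it suffices to bound $\bP^x[\sigma_A\le t]$ from below. Being a subset of $B$, the set $A$ is nearly Borel, and $\overline A\subset\overline B$ is compact. Lemma~\ref{LM51} therefore gives a finite measure $\nu:=\nu^{(\lambda)}_{A}$, concentrated on $\overline A$, with $p_A^{(\lambda)}(y)=\int_E u_\lambda(y,z)\nu(\dd z)$ for \emph{every} $y\in E$.

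\emph{Step 2: hitting-time lower bound via the Markov property.} Write
\[
\bP^x[\sigma_A\le t]\ge \bE^x[\re^{-\lambda\sigma_A};\sigma_A\le t]=p^{(\lambda)}_A(x)-\bE^x[\re^{-\lambda\sigma_A};t<\sigma_A<\infty].
\]
On $\{\sigma_A>t\}$ one has $\sigma_A=t+\sigma_A\circ\theta_t$. By the Markov property,
\[
\bE^x[\re^{-\lambda\sigma_A};t<\sigma_A<\infty]\le \re^{-\lambda t}\int_E p(t,x,y)\,p^{(\lambda)}_A(y)\,\mu(\dd y).
\]
Insert the representation from Step 1 and apply Fubini together with Chapman--Kolmogorov, $\int_E p(t,x,y)p(s,y,z)\mu(\dd y)=p(t+s,x,z)$, which follows from (AC) and the semigroup property (equality holds for $\mu$-a.e.\ intermediate point, which is enough under the integral). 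The right-hand side then equals $\int\nu(\dd z)\int_t^\infty \re^{-\lambda s}p(s,x,z)\,\dd s$. Since the representation holds at the point $x$ itself, this gives
\[
\bP^x[\sigma_A\le t]\ge \nu(\overline A)\,\inf_{y\in\overline A}\int_0^t \re^{-\lambda s}p(s,x,y)\,\dd s\ge \frac{\nu(\overline A)}{\re^{\lambda T}}\inf_{y\in\overline A}\int_0^t p(s,x,y)\,\dd s,\quad 0<t\le T.
\]
The case $\lambda=0$ for transient $X$ is identical; there the factor $\re^{-\lambda T}$ is simply $1$.

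\emph{Step 3: the time integral.} Every $y\in\overline A$ satisfies $d(x,y)\le\widetilde d(x,B)+\delta$. Applying \eqref{eq:55} bounds the integrand below by the integrand of $I(t)$ in \eqref{eq:310}. The substitution $u=as^{-1/(\beta-1)}$ and restriction of the resulting integral to $[v,(1+\rho)v]$ are exactly as in the proof of Theorem~\ref{THM3.2}, and yield \eqref{eq:315}. Multiplying by $C_1\nu(\overline A)\re^{-\lambda T}$ gives the claimed estimate with the stated $C_3$.

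\emph{Main obstacle.} The only delicate point is Step 2, which needs the representation at the specific point $x$ rather than $\mu$-a.e. Lemma~\ref{LM51} provides exactly this pointwise statement, so no excessive-function regularization argument is needed here. The remaining care is the measurability required for Fubini: $u_\lambda$ and $p$ are jointly Borel and all integrands are nonnegative, so Tonelli applies.
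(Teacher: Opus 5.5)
Your argument follows the paper's proof step for step. You pass to $A=B_{x,\delta}$, use the pointwise representation of $p^{(\lambda)}_A$ from Lemma~\ref{LM51}, run the Markov-property/Fubini computation of Lemma~\ref{LM41} (Lemma~\ref{L:1.1} for $\lambda=0$), and finish with the time-integral bound \eqref{eq:315} from Theorem~\ref{THM3.2}. Two of your justifications are, however, incorrect as written.

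\textbf{Chapman--Kolmogorov (the substantive point).} For fixed $x,t,s$, (AC) and $P_{t+s}=P_tP_s$ give $\int_E p(t,x,y)p(s,y,z)\mu(\dd y)=p(t+s,x,z)$ only for $\mu$-a.e.\ endpoint $z$. After Fubini, though, $z$ is integrated against $\nu=\nu^{(\lambda)}_A$. This measure lives on $\overline A$ and is typically singular with respect to $\mu$: think of the equilibrium measure of a sphere for Brownian motion on $\bR^n$, or a point mass for a one-dimensional diffusion hitting a point. So the exceptional $\mu$-null set is not harmless. Your argument therefore does not yield
$$\re^{-\lambda t}\int_E p(t,x,y)p^{(\lambda)}_A(y)\mu(\dd y)\le\int\nu(\dd z)\int_t^\infty \re^{-\lambda s}p(s,x,z)\,\dd s,$$
let alone the equality you assert. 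What is needed is a version of the kernel satisfying Chapman--Kolmogorov for all (or at least $\nu$-a.e.) $z$. It must be the same version for which \eqref{eq:55} holds at every $y$ and from which $u_\lambda$ in Lemma~\ref{LM51} is built. The paper's computation tacitly works with such a version; you should state that assumption explicitly rather than claim an a.e.\ identity suffices.

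\textbf{Near-Borel measurability (minor).} A subset of a nearly Borel set need not be nearly Borel. $A$ is nearly Borel because it is the intersection of $B$ with the closed set $\{y:d(y,x)\le\widetilde d(x,B)+\delta\}$.
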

\begin{proof}
Note that $B_{x,\delta}\in \sB^n(E)$ since $B\in \sB^n(E)$, and its closure is compact since $\overline{B}$ is compact. 
Mimicking the proof of Lemma~\ref{LM41} for $\lambda>0$ (and the last step of the proof of Lemma~\ref{L:1.1} for $\lambda=0$), and applying Lemma~\ref{LM51} to $B_{x,\delta}$, we obtain
\[
     \bP^x[\sigma_{B_{x,\delta}}\leq t]\geq  \nu^{(\lambda)}_{B_{x,\delta}}(\overline{B}_{x,\delta})\inf_{y\in \overline{B}_{x,\delta}}\int_0^t \re^{-\lambda s}p(s,x,y)\dd s\geq \frac{\nu^{(\lambda)}_{B_{x,\delta}}(\overline{B}_{x,\delta})}{\ee^{\lambda T}}\inf_{y\in \overline{B}_{x,\delta}}\int_0^t p(s,x,y)\dd s,\quad \forall 0<t\leq T. 
 \]
 The remaining argument is the same as that in the proof of Theorem~\ref{THM3.2}. 
\end{proof}

\subsection{Refinement with sector condition}\label{SEC52}

Another way to generalize the symmetry assumption (H1a) is to impose a sector condition instead:
\begin{itemize}
    \item[(H3)] For each $t\geq 0$, the restriction of $P_t$ to $L^2(E,\mu)\cap \bb \sB(E)$ extends uniquely to a contraction operator on $L^2(E,\mu)$. The infinitesimal generator of $(P_t)_{t\geq 0}$ on $L^2(E,\mu)$ is given by
    \[
        \sL f:=\lim_{t\rightarrow 0}\frac{P_t f-f}{t},
    \]
    where the domain $\cD(\sL)$ of $\sL$ consists of all functions $f\in L^2(E,\mu)$ for which the above limit exists in the strong sense in $L^2(E,\mu)$. Furthermore, there exists a finite constant $K>0$ such that the bilinear form 
    \[
        \EE(f,g):=(f,-Lg)_\mu,\quad f,g\in \cD(\sL)
    \]
    satisfies 
    \begin{equation}\label{eq:56}
        |\EE_1(f,g)|\leq K\cdot \EE_1(f,f)^{1/2} \EE_1(g,g)^{1/2},\quad \forall f,g\in \cD(\sL),
    \end{equation}
    where $\EE_1(f,g):=\EE(f,g)+ (f,g)_\mu$ for $f,g\in \cD(\sL)$. 
\end{itemize}
Under this framework, let $\FF$ be the completion of $\cD(\sL)$ with respect to the norm $\|f\|_{\EE_1}:=\EE_1(f,f)^{1/2}$. Then $(\EE,\FF)$ is a semi-Dirichlet form on $L^2(E,\mu)$, as shown in \cite{F01}. 
When $X$ is transient, we use a slightly stronger version of (H3) (only for the case $\lambda=0$ as in Lemma~\ref{LM53}): 
\begin{itemize}
    \item[(H3')] All the assumptions in (H3) are retained, except that \eqref{eq:56} is replaced by the following condition:
    \[
        |\EE(f,g)|\leq K\cdot \EE(f,f)^{1/2} \EE(g,g)^{1/2},\quad \forall f,g\in \cD(\sL). 
    \]
\end{itemize}
In this case, let $\FF_\re$ denote the completion of $\cD(\sL)$ with respect to the norm $\|f\|_\EE:=\EE(f,f)^{1/2}$, and define $\FF:=\FF_\ee \cap L^2(E,\mu)$. Then $(\EE,\FF)$ is also a semi-Dirichlet form on $L^2(E,\mu)$ with $\FF_\ee$ being its extended Dirichlet space. (Under assumption (H3), we likewise use $\FF_\re$ to denote the extended Dirichlet space of $(\EE,\FF)$.)

\begin{rmk}
Clearly, (H3') implies (H3). 
If $X$ satisfies (H3), then  for any $\lambda>0$, its subprocess $X^{(\lambda)}$, with transition function $(\ee^{-\lambda t}P_t)_{t\geq 0}$, satisfies (H3'). In addition, the symmetry assumption (H1a) implies (H3) and (H3'). However,  under either (H3) or (H3'), $X$ need not admit a dual process as in (H2a). The assumptions (H2a) and (H3) together constitute the classic  framework of non-symmetric Dirichlet forms; see \cite{MR}.
\end{rmk}

According to \cite[Theorem~3.22]{F01}, if $X$ satisfies (H3) (or (H3') when $X$ is transient), then $(\EE,\FF)$ is quasi-regular on $L^2(E,\mu)$. In particular, every function $f\in \FF_\re$ admits an $\EE$-quasi-continuous $\mu$-a.e. version.  For conveniece, we always take $f\in \FF_\re$ to be such version. 

We are now in a position to state the analogue of Lemma~\ref{LM51} under the sector condition. For $\lambda\geq 0$, let $\mathscr{M}^{(\lambda)}$ denote the class of positive measures $\nu$ on $E$ such that $\nu U_\lambda$ is $\sigma$-finite and absolutely continuous with respect to $\mu$, and such that the Radon-Nikodym derivative $\dd (\nu U_\lambda)/\dd \mu$ lies in $\FF$ if $\lambda>0$, and in $\FF_\re$ if $\lambda=0$ and $X$ is transient. Write $\mathscr{M}:=\mathscr{M}^{(0)}$ for convenience. 

 \begin{lem}\label{LM53}
    Assume that the Borel right process $X$ satisifes (AC) and (H3'), and that $X$ is transient.  Let $B\in \sB^n(E)$ be a nearly Borel set such that $p_B\in \FF_\re$. Then there exists a unique measure $\nu_B\in \mathscr{M}$ such that $\FF_\re\subset L^1(E,\nu_B)$ and
    \begin{equation}\label{eq:57-2}
        \EE(g,p_B)=\int_E g(x)\nu_B(\dd x),\quad \forall g\in \FF_\re. 
    \end{equation}
    In particular, 
    \begin{equation}\label{eq:57}
        p_B(x)=\int_E u(x,y)\nu_B(\dd y),\quad \forall x\in E,
    \end{equation}
    and $\nu_B$ is a finite measure concentrated on $\overline{B}$ with $\nu_B(\overline{B})=\EE(p_B,p_B)$. 
 \end{lem}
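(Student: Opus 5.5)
The proof follows the symmetric argument of Lemma~\ref{L:1.1}, with the symmetry of $u$ replaced by the duality built into the semi-Dirichlet form. The main tool is the coercive, non-symmetric Lax--Milgram/Riesz theory in the transient extended space. Throughout, $(\FF_\re,\EE)$ is a Hilbert space under the norm $\|\cdot\|_\EE$, and by (H3') the form $\EE$ is bounded on it.

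\emph{Step 1: the measure $\nu_B$.} I will first show that $g\mapsto \EE(g,p_B)$ is a positive functional on $\FF_\re$, i.e. $\EE(g,p_B)\ge 0$ whenever $g\ge 0$ q.e. For this, use that $p_B$ is the $0$-order hitting function. In the semi-Dirichlet setting (Fitzsimmons), $p_B$ is the $\EE$-projection (from the correct side) of $\FF_\re$ onto $\{f: f\ge 1 \text{ q.e. on } B\}$. Equivalently, $p_B$ minimizes the variational inequality
\[
\EE(f-p_B,p_B)\ge 0 \quad \text{for all } f\in\FF_\re \text{ with } f\ge 1 \text{ q.e. on } B .
\]
Taking $f=p_B+g$ gives positivity. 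The quasi-regularity from \cite[Theorem~3.22]{F01} then lets me invoke the quasi-regular analogue of \cite[Lemma 2.2.6/2.2.10]{FOT}: a positive functional that is continuous on $\FF_\re$ is represented by a smooth measure $\nu_B$ of finite energy, with $\FF_\re\subset L^1(\nu_B)$. This yields \eqref{eq:57-2}. Uniqueness follows from the density of $\FF_\re$ (quasi-continuous versions) in the determining class. For the support, take $g$ supported in the open set $\overline{B}^c$. Since $p_B$ is $0$-harmonic there (via $p_B=P_Bp_B$ and the orthogonality $\EE(g,p_B)=0$ for $g$ vanishing q.e. on $B$), the functional vanishes on $\overline B^c$, so $\nu_B$ is concentrated on $\overline B$. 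Taking $g=p_B$ gives $\nu_B(\overline B)=\int p_B\,d\nu_B=\EE(p_B,p_B)$, because $p_B=1$ q.e. on $B^r\supset$ (support of $\nu_B$ up to exceptional sets).

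\emph{Step 2: membership in $\mathscr{M}$ and the potential formula.} I need to identify $\nu_B U=\int u(x,\cdot)\,\nu_B(dx)\,\mu$ with an element of $\FF_\re$. For $f=f\wedge nh$ with $h$ a reference function, the function $Uf$ lies in $\FF_\re$ and satisfies $\EE(Uf,v)=(f,v)_\mu$; note the order of arguments, since $U$ is the resolvent of $\sL$. Then
\[
\int_E Uf\,d\nu_B=\EE(Uf,p_B)=(f,p_B)_\mu .
\]
Letting $n\to\infty$ gives $p_B=\nu_B U$ as densities $\mu$-a.e. Here no symmetry is needed, because $\int Uf\,d\nu_B=\int f\,d(\nu_BU)$ directly. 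Hence $d(\nu_BU)/d\mu=p_B\in\FF_\re$ and $\nu_B\in\mathscr M$.

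\emph{Step 3: from $\mu$-a.e. to everywhere.} Both $p_B$ and $x\mapsto\int u(x,y)\,\nu_B(dy)$ are excessive. I must still pass from the identity $\nu_BU=p_B\mu$ to \eqref{eq:57}, which concerns $U\nu_B(x)=\int u(x,y)\,\nu_B(dy)$. This step is the main obstacle. Step 2 gives the left potential $\nu_BU$, whereas \eqref{eq:57} needs the right potential $U\nu_B$, and in the symmetric case these coincided. My plan is to test against $\EE(p_B,g)$ in the other slot, using the coresolvent $\widehat U f$ of the dual form $\widehat\EE(f,g)=\EE(g,f)$. This coresolvent exists as an operator on $\FF_\re$ by the sector condition (H3'), even without a dual process. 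It satisfies $\EE(g,\widehat Uf)=(g,f)_\mu$, and formally $\widehat Uf(y)=\int u(x,y)f(x)\,\mu(dx)$, the adjoint kernel. By Lax--Milgram I will write $p_B=U\nu_B$ $\mu$-a.e., with the identification of $\int \widehat Uf\,d\nu_B$ via Fubini with $(f,U\nu_B)_\mu$ justified using quasi-continuity of $\widehat Uf$. Once the $\mu$-a.e. equality of the two excessive functions holds, it upgrades to every $x$ by \cite[Theorem~A.2.17(iii)]{CF}, since $\mu$ is a reference measure under (AC). Finiteness of $\nu_B$ follows from $\nu_B(\overline B)=\EE(p_B,p_B)<\infty$.
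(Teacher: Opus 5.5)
\textbf{Gap 1: the two slots of $\EE$ are swapped in Step 2, and this changes the result.} With the paper's convention $\EE(f,g)=(f,-\sL g)_\mu$, the potential belongs in the second slot: $\EE(v,Uf)=(v,f)_\mu$ for suitable $f$. The identity with the first slot holds for the transposed kernel: $\EE(\widehat Uf,v)=(f,v)_\mu$, where $\widehat Uf(y)=\int_E f(x)u(x,y)\,\mu(\dd x)$. Your Step 1 inequality puts $p_B$ in the second slot, which is correct; Step 2 silently switches convention.

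Your conclusion $p_B=\dd(\nu_BU)/\dd\mu$, i.e. $p_B(y)=\int_E u(x,y)\,\nu_B(\dd x)$, is false for non-symmetric $X$. Take the $\lambda$-subprocess of Brownian motion with drift $b\neq 0$ on $\bR$; it is transient and satisfies (H3'). With $B=\{0\}$ one has $p_B(x)=u_\lambda(x,0)/u_\lambda(0,0)$. But $\int_E u_\lambda(x,y)\,\nu_B(\dd x)=u_\lambda(0,y)/u_\lambda(0,0)$, and $u_\lambda(z,0)\neq u_\lambda(0,z)$. So your proof of $\nu_B\in\mathscr M$ rests on a false identity, and Steps 2 and 3 assert two incompatible formulas for $p_B$. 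Step 3's identity $\EE(g,\widehat Uf)=(g,f)_\mu$ has the same reversal.

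The repair is short:
\begin{itemize}
\item Put $g=\widehat Uf$ in \eqref{eq:57-2}. Then $\int_E \widehat Uf\,\dd\nu_B=\EE(\widehat Uf,p_B)=(f,p_B)_\mu$.
\item Fubini turns the left side into $(f,U\nu_B)_\mu$. This gives \eqref{eq:57} $\mu$-a.e.\ directly; there is no left/right obstacle.
\item Membership $\nu_B\in\mathscr M$ means $\widehat U\nu_B=\dd(\nu_BU)/\dd\mu\in\FF_\re$, which is a separate claim. One route: apply Lax--Milgram to the transposed form, then identify the solution with $\widehat U\nu_B$ by testing against $Uf$. Alternatively, cite \cite[Proposition~4.12]{F01}, as the paper does for existence and uniqueness.
\end{itemize}

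\textbf{Gap 2: the mass identity needs $\nu_B$ to be carried by $B^r$.} The step $\nu_B(\overline B)=\int_E p_B\,\dd\nu_B$ requires $p_B=1$ $\nu_B$-a.e. Your parenthetical about the support of $\nu_B$ ``up to exceptional sets'' is exactly the unproved point. Test functions supported in $\overline B^c$ give at best $\nu_B(\overline B^c)=0$. Even their supply is not automatic in the quasi-regular setting, since $\FF\cap C_c(E)$ need not be rich. Moreover $p_B<1$ can occur on $\overline B\setminus(B\cup B^r)$, so concentration on $\overline B$ is not enough.

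The paper instead works with the finely closed set $D=B\cup B^r$ and $G=D^c$:
\begin{itemize}
\item From $p_B=P_Dp_B$ and \cite[Lemma~5.4(ii)]{F01}, $\EE(g,p_B)=0$ for every $g\in\FF$ vanishing q.e.\ on $D$.
\item The function $g=U^G_1h$ with $h>0$ vanishes q.e.\ on $D$ and is strictly positive on $G$. Hence $\nu_B(G)=0$.
\item $B\setminus B^r$ is semipolar, hence $\mu$-polar, so $\nu_B(B\setminus B^r)=0$ by \cite[Theorems~4.1 and~4.3]{F01}.
\item Only then does $\nu_B(\overline B)=\nu_B(B^r)=\int_E p_B\,\dd\nu_B=\EE(p_B,p_B)$ follow.
\end{itemize}
Your own orthogonality claim for $g$ vanishing q.e.\ on $B$ is enough, provided you apply it to such a $g$ that is positive on all of $D^c$, not only on $\overline B^c$.
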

 \begin{proof}
    The existence of $\nu_B$ has already been established in  \cite[Proposition~4.12]{F01}. It is also shown in \cite[Remark~4.19(b)]{F01} that \eqref{eq:57} holds for $\mu$-a.e. $x\in E$. Since both $p_B$ and $x\mapsto \int_E u(x,y)\nu_B(\dd y)$ are excessive functions with respect to $X$, it follows that \eqref{eq:57} holds for all $x\in E$. 

    We claim that $\nu_B(D^c)=0$, where $D:=B\cup B^r$. 
    Since $D\subset \overline{B}$, this claim will establish $\nu_B(\overline{B}^c)=0$. Indeed, $D$ is finely closed and $G:=D^c$ is finely open with respect to $X$.  
    Let $\FF^G:=\{g\in \FF:g=0, \EE\text{-q.e. on }D\}$ and let $\EE^G$ denotes  the restriction of $\EE$ to $\FF^G\times \FF^G$. By \cite[Theorem~5.10]{F01}, $(\EE^G,\FF^G)$ is quasi-regular on $L^2(E_G,\mu_{E_G})$, where $E_G:=\{x\in E:\bP^x[\sigma_D>0]=1\}$ and $\mu_{E_G}:=\mu|_{E_G}$ is the restriction of $\mu$ to $E_G$. On the other hand, note that $\sigma_B=\sigma_D$, $\bP^x$-a.s. for all $x\in E$, and $D^r=B^r$ (see \cite[(10.6)]{S88}); this implies that 
    \begin{equation}\label{eq:59}
        p_B\equiv p_D=P_D\mathbf{1}_E.
    \end{equation}
    By \cite[Theorem~4.3]{F01} (semipolar sets are $\mu$-polar) and \cite[Theorem~A.2.17~(ii)]{CF}, $B\setminus B^r$ is polar. We have $X_{\sigma_D}\in B^r=D^r$, $\bP^x$-a.s. on $\{\sigma_D<\infty\}$. It follows from \cite[Chapter I, (11.9)]{BG68} and \eqref{eq:59} that 
    \[
        p_B=P_D\mathbf{1}_E=P_DP_D\mathbf{1}_E=P_D p_B. 
    \]
 Applying \cite[Lemma~5.4(ii)]{F01} to $P_D p_B$ ($=p_B$), we obtain
    \[
        \EE(g,p_B)=0,\quad \forall g\in \FF^G. 
    \]
    It then follows from \eqref{eq:57-2} that
    \[
    \int_E g(x)\nu_B(\dd x)=0,\quad \forall g\in \FF^G. 
    \]
    Taking a function $g\in \pp\FF^G$ such that $g(x)>0$ for all $x\in G$, we can conclude that $\nu_B(G)=0$. (For example, let $(U^G_\beta)_{\beta>0}$ be the resolvent of the process $X^G$ obtained by killing $X$ upon leaving $G$; note that $X^G$ is properly associated with $(\EE^G,\FF^G)$. Taking a strictly positive function $h\in \sB(E_G)\cap L^2(E_G,\mu_{E_G})$ and setting $g:=U^G_1h$ give such a $g$.) 

    Finally, we show that $\nu_B(D)<\infty$. By \cite[Theorem~4.3]{F01}, $B\setminus B^r$ is $\mu$-polar, and it then follows from \cite[Theorem~4.1]{F01} that $\nu_B(B\setminus B^r)=0$.  Since $p_B\in \FF_\re\subset L^1(E,\nu_B)$ and $p_B=p_D\equiv 1$ on $B^r$, we have
    \[
        \nu_B(D)=\nu_B(B^r)=\int_E p_B(x)\nu_B(\dd x)=\EE(p_B,p_B)<\infty. 
    \]
    This completes the proof. 
 \end{proof}
 \begin{rmk}
 If $\mathbf{1}_E\in \FF_\re$, then $p_B\in \FF_\re$ for every $B\in \sB^n(E)$. This follows from the identity $p_B=p_{B\cup B^r}$ and \cite[Lemma~5.4(1)]{F01}.
 \end{rmk}

 By considering the subprocess $X^{(\lambda)}$, we can readily establish the integral representation of $p^{(\lambda)}_B$ for $\lambda>0$ under assumption (H3).

 \begin{corollary}\label{COR55}
     Assume that the Borel right process $X$ satisifes (AC) and (H3).  Let $\lambda>0$, and let $B\in \sB^n(E)$ be  such that $p^{(\lambda)}_B\in \FF$. Then there exists a unique measure $\nu^{(\lambda)}_B\in \mathscr{M}^{(\lambda)}$ such that $\FF\subset L^1(E,\nu^{(\lambda)}_B)$ and
    \[
        \EE_\lambda(g,p_B)=\int_E g(x)\nu^{(\lambda)}_B(\dd x),\quad \forall g\in \FF. 
    \]
    In particular, 
    \[
        p^{(\lambda)}_B(x)=\int_E u_\lambda(x,y)\nu^{(\lambda)}_B(\dd y),\quad \forall x\in E,
    \]
    and $\nu^{(\lambda)}_B$ is a finite measure concentrated on $\overline{B}$ with 
    \[
\nu^{(\lambda)}_B(\overline{B})=\EE_\lambda (p^{(\lambda)}_B,p^{(\lambda)}_B):=\EE(p^{(\lambda)}_B,p^{(\lambda)}_B)+\lambda\cdot (p^{(\lambda)}_B,p^{(\lambda)}_B)_\mu.
    \]
 \end{corollary}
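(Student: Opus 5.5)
The plan is to reduce Corollary~\ref{COR55} to Lemma~\ref{LM53}, applied to the subprocess $X^{(\lambda)}$ with transition function $(\ee^{-\lambda t}P_t)_{t\geq 0}$.

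First we check that $X^{(\lambda)}$ satisfies the hypotheses of Lemma~\ref{LM53}. It is a Borel right process, and it can be realized on the same sample space as in the proof of Lemma~\ref{LM51} via \eqref{eq:52}. It satisfies (AC) with density $\ee^{-\lambda t}p(t,x,y)$ and potential kernel $u_\lambda(x,y)$. It is transient, since $U_\lambda 1\leq 1/\lambda$. Its generator is $\sL-\lambda$ on $\cD(\sL)$, so its bilinear form is $\EE_\lambda=\EE+\lambda(\cdot,\cdot)_\mu$. By \eqref{eq:56} and the equivalence of $\EE_\lambda$ and $\EE_1$, it satisfies (H3') with constant $K\max\{\lambda,\lambda^{-1}\}$; this is the fact recorded in the preceding remark.

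Next we identify the associated objects. Since $\EE_\lambda(f,f)\geq \lambda(f,f)_\mu$, the $\EE_\lambda$-completion of $\cD(\sL)$ sits inside $L^2$. Hence the extended space of $X^{(\lambda)}$ coincides with $\FF$, and $\EE_\lambda$-quasi-continuity agrees with $\EE$-quasi-continuity. By the computation \eqref{eq:54}, the $0$-order hitting probability of $B$ for $X^{(\lambda)}$ is $\hat{\bP}^x[\sigma_B<\infty]=p^{(\lambda)}_B(x)$. The hitting times, the regular set $B^r$, polar sets and the fine topology are unchanged by killing at an independent exponential rate. Consequently the hypothesis $p^{(\lambda)}_B\in\FF$ is exactly the hypothesis $p_B\in\FF_\re$ of Lemma~\ref{LM53} for $X^{(\lambda)}$, and the class $\mathscr{M}$ for $X^{(\lambda)}$ is $\mathscr{M}^{(\lambda)}$.

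Lemma~\ref{LM53} then gives a unique $\nu^{(\lambda)}_B\in\mathscr{M}^{(\lambda)}$ with $\FF\subset L^1(\nu^{(\lambda)}_B)$ and
\[
\EE_\lambda(g,p^{(\lambda)}_B)=\int g\,\dd\nu^{(\lambda)}_B \quad\text{for all } g\in\FF.
\]
(The $p_B$ written in the statement should read $p^{(\lambda)}_B$.) The lemma also gives the pointwise representation $p^{(\lambda)}_B=\int u_\lambda(\cdot,y)\nu^{(\lambda)}_B(\dd y)$, that $\nu^{(\lambda)}_B$ is concentrated on $\overline{B}$, and that
\[
\nu^{(\lambda)}_B(\overline{B})=\EE_\lambda(p^{(\lambda)}_B,p^{(\lambda)}_B),
\]
which expands to the stated formula.

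The main obstacle is justifying that the inputs from \cite{F01} used inside Lemma~\ref{LM53} transfer to $X^{(\lambda)}$. These are quasi-regularity, ``semipolar implies $\mu$-polar'', and the part-process result \cite[Theorem~5.10]{F01}. It also has to be checked that $\lambda$-excessiveness for $X$ is the same as excessiveness for $X^{(\lambda)}$, which is what upgrades the $\mu$-a.e.\ identity to all $x$. I expect each of these to follow from the facts that the capacities and exceptional sets of $\EE_\lambda$ and $\EE_1$ are equivalent, as in the bounds $\lambda\mathrm{Cap}_{(1)}\leq\mathrm{Cap}_{(\lambda)}\leq\mathrm{Cap}_{(1)}$ (for $\lambda\leq 1$, and analogously for $\lambda>1$), and that $X^{(\lambda)}$ has the same fine topology as $X$.
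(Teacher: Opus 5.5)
Your proposal is correct and follows the paper's proof. The paper likewise applies Lemma~\ref{LM53} to the subprocess $X^{(\lambda)}$, using that its form is $(\EE_\lambda,\FF)$ with extended space $\FF$ and that \eqref{eq:54} identifies its hitting probability with $p^{(\lambda)}_B$; you are also right that $p_B$ in the displayed identity should read $p^{(\lambda)}_B$, and your closing worry is unnecessary, since once $X^{(\lambda)}$ is checked to satisfy (AC), (H3') and transience, Lemma~\ref{LM53} (including its inputs from \cite{F01}) applies to it as a black box.
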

\begin{proof}
  Let $\EE_\lambda(f,g):=\EE(f,g)+\lambda\cdot (f,g)_\mu$ for $f,g\in \FF$.  The subprocess $X^{(\lambda)}$ is associated with the semi-Dirichlet form $(\EE^{(\lambda)},\FF^{(\lambda)})=(\EE_\lambda, \FF)$, whose extended Dirichlet space is again $\FF^{(\lambda)}_\ee=\FF$. 
    As shown in \eqref{eq:54}, $p^{(\lambda)}_B$ coincides with the hitting probability of $B$ for the subprocess $X^{(\lambda)}$. Therefore, the desired conclusions follow immediately from  Lemma~\ref{LM53}. 
\end{proof} 
\begin{rmk}
    If $\mathbf{1}_E\in \FF$ (for example, if $X$ is conservative in the sense that $P_t\mathbf{1}_E=\mathbf{1}_E$ for all $t>0$ and $\mu$ is finite), then $p^{(\lambda)}_B\in \FF$ for any $\lambda>0$ and any $B\in \sB^n(E)$. 
\end{rmk}

With the integral representation established in Lemma~\ref{LM53} and Corollary~\ref{COR55}, we can obtain the following result. 

\begin{thm}\label{THM:5.8}
    Let $X$ be a Borel right process on $(E,d)$ that satisfies (AC) and the following short-time heat kernel lower bound at a fixed point $x\in E$: For some $T>0, \alpha>0$ and $\beta>1$, there exist constants $C_1,C_2>0$ such that
\begin{equation}\label{eq:511-2}
p(t,x,y)\ge \frac{C_1}{t^{\alpha/\beta}}\exp\left\{-C_2\left(\frac{d(x,y)}{t^{1/\beta}}\right)^{\beta/(\beta -1)}\right\}, \quad \forall t\in (0, T], \, y\in E.
\end{equation}
Let $B\in \sB^n(E)$ be a nearly Borel set.
\begin{itemize}
    \item[(1)] If $X$ satsifies (H3) and $p^{(\lambda)}_B\in \FF$ for some $\lambda>0$, then for any $\delta,\rho>0$,
 \[
        \bP^x[\sigma_B\leq t]\geq \frac{\nu^{(\lambda)}_{B_{x,\delta}}(\overline{B}_{x,\delta})}{\re^{\lambda T}}\cdot \frac{C_1 C_3}{ t^{(\alpha-\beta)/\beta}}\exp\left\{-(1+\rho)C_2\left( \frac{\widetilde{d}(x,B)+\delta}{t^{1/\beta}}\right)^{\beta/(\beta-1)}\right\},\quad 0<t\leq T,
    \]
    where  $B_{x,\delta}:=\{y\in B:d(y,x)\leq \widetilde{d}(x,B)+\delta\}$, $\nu^{(\lambda)}_{B_{x,\delta}}$ is the measure obtained in Corollary~\ref{COR55}, $C_1$ and $C_2$ are as in \eqref{eq:511-2}, and $C_3=(\beta-1)\rho\cdot \min\{(1+\rho)^{\alpha-\beta-\alpha/\beta}, 1\}>0$ is a constant depending on $\alpha, \beta,\rho$.
\item[(2)] If $X$ is transient and satisfies (H3'), and $p_B\in \FF_\re$, then the same conclusion as in the first statement holds with $\lambda=0$ and $\nu^{(0)}_{B_{x,\delta}}=\nu_{B_{x,\delta}}$, where $\nu_{B_{x,\delta}}$ is the measure obtained in Lemma~\ref{LM53}. 
\end{itemize}
\end{thm}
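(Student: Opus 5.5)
The plan follows the template of Lemma~\ref{LM41} and Theorems~\ref{THM3.2}, \ref{THM:4.2}, with Corollary~\ref{COR55} and Lemma~\ref{LM53} supplying the integral representation that symmetry provided before. We work with $B_{x,\delta}$ in place of $B$. Since $\sigma_B\le\sigma_{B_{x,\delta}}$, it suffices to bound $\bP^x[\sigma_{B_{x,\delta}}\le t]$ from below.

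The first step is to place $B_{x,\delta}$ within the scope of Corollary~\ref{COR55}; in case (2) the analogous requirement concerns Lemma~\ref{LM53}. For this we need $p^{(\lambda)}_{B_{x,\delta}}\in\FF$, respectively $p_{B_{x,\delta}}\in\FF_\re$. This is the step I expect to be the main obstacle, because the hypothesis is stated only for $B$.

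The first attempt is to note that $p^{(\lambda)}_{B_{x,\delta}}=P^{(\lambda)}_{D}p^{(\lambda)}_B$, where $D=B_{x,\delta}\cup B_{x,\delta}^r$ and $P^{(\lambda)}_D$ is the $\lambda$-order hitting operator. Indeed, the strong Markov property at $\sigma_D$, together with $X_{\sigma_D}\in D^r\subset B^r$, gives $p^{(\lambda)}_B(X_{\sigma_D})=1$. The membership would then follow from \cite[Lemma~5.4]{F01}, which says that $\lambda$-order hitting operators map $\FF$ into $\FF$ (respectively $\FF_\re$ into $\FF_\re$); this is the result already used in the Remark after Lemma~\ref{LM53}. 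If that citation does not literally cover this case, the fallback is to interpret the theorem as requiring the membership hypothesis for $B_{x,\delta}$ itself, which is how the measure $\nu^{(\lambda)}_{B_{x,\delta}}$ in the statement is implicitly assumed to exist.

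With the representation
\[
p^{(\lambda)}_{B_{x,\delta}}(z)=\int u_\lambda(z,y)\,\nu^{(\lambda)}_{B_{x,\delta}}(\dd y),\qquad \nu^{(\lambda)}_{B_{x,\delta}}(\overline{B}_{x,\delta}^c)=0,
\]
the rest is a repetition of earlier arguments. Start from
\[
\bP^x[\sigma\le t]\ge \bE^x[\re^{-\lambda\sigma}]-\bE^x[\re^{-\lambda\sigma};\sigma>t],\qquad \sigma=\sigma_{B_{x,\delta}}.
\]
The Markov property as in \eqref{eq:43} bounds the last term by $\re^{-\lambda t}\int p(t,x,y)p^{(\lambda)}_{B_{x,\delta}}(y)\mu(\dd y)$. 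Inserting the representation, applying Fubini and Chapman--Kolmogorov (which follows from the semigroup property and (AC) for $\mu$-a.e.\ intermediate point), this equals
\[
\int \nu^{(\lambda)}_{B_{x,\delta}}(\dd z)\int_t^\infty \re^{-\lambda s}p(s,x,z)\,\dd s.
\]
Hence
\[
\bP^x[\sigma\le t]\ge \nu^{(\lambda)}_{B_{x,\delta}}(\overline B_{x,\delta})\,\re^{-\lambda T}\inf_{y\in\overline B_{x,\delta}}\int_0^t p(s,x,y)\,\dd s.
\]
The finiteness of $\nu^{(\lambda)}_{B_{x,\delta}}(\overline B_{x,\delta})$ is needed for this subtraction to make sense, and it is provided by Corollary~\ref{COR55}. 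For $\lambda=0$ in case (2), use $p_{B_{x,\delta}}$ instead, exactly as in the last step of Lemma~\ref{L:1.1}.

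Finally, every $y\in\overline B_{x,\delta}$ satisfies $d(x,y)\le\widetilde d(x,B)+\delta$, so \eqref{eq:511-2} yields the pointwise lower bound \eqref{eq:27-3}. The integral $I(t)$ is then estimated verbatim as in \eqref{eq:28-2}--\eqref{eq:315}: substitute $u=as^{-1/(\beta-1)}$ and restrict to the interval $[v,(1+\rho)v]$. This produces the factor $C_1C_3t^{-(\alpha-\beta)/\beta}\exp\{-(1+\rho)C_2((\widetilde d(x,B)+\delta)/t^{1/\beta})^{\beta/(\beta-1)}\}$ and completes both statements.
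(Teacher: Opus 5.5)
Your proposal is correct and follows the paper's proof. The only new ingredient beyond Theorem~\ref{THM52} is exactly your identity $p^{(\lambda)}_{B_{x,\delta}}=P^{(\lambda)}_{D}p^{(\lambda)}_B$ with $D=B_{x,\delta}\cup B^r_{x,\delta}$, combined with \cite[Lemma~5.4]{F01}, so your fallback is not needed. One justification is missing: the claim $X_{\sigma_D}\in D^r$ fails for general right processes, since the process may hit $D$ at irregular points. It holds here because $D\setminus D^r=B_{x,\delta}\setminus B^r_{x,\delta}$ is semipolar, hence polar under (H3) and (AC) by \cite[Theorem~4.3]{F01} and \cite[Theorem~A.2.17~(ii)]{CF}, which the paper invokes explicitly.
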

\begin{proof}
We consider  only the transient case, under assumption (H3'). 
    The proof is identical to that of  Theorem~\ref{THM52}, except for showing that $p_{B_{x,\delta}}\in \FF_\ee$. To this end, let $D_\delta:=B_{x,\delta}\cup B^r_{x,\delta}$, where $B^r_{x,\delta}$ denotes the set of all regular points for $B_{x,\delta}$. By \cite[(10.6)]{S88} and the fact that $B_{x,\delta}\subset B$, we have $D^r_\delta=B^r_{x,\delta}\subset B^r=D^r$, where $D:=B\cup B^r$. In addition, $p_{B_{x,\delta}}=p_{D_\delta}$ and $p_B=p_D$.  According to \cite[Theorem~4.3]{F01} and \cite[Theorem~A.2.17~(ii)]{CF}, $B_{x,\delta}\setminus B^r_{x,\delta}$ is polar. Therefore, $X_{\sigma_{D_{\delta}}}\in B^r_{x,r}\subset B^r=D^r$, $\bP^y$-a.s. on $\{\sigma_{D_{\delta}}<\infty\}$ for all $y\in E$. Using \cite[Chapter I, (11.9)]{BG68}, we obtain 
    \begin{equation}\label{eq:511}
p_{B_{x,\delta}}=p_{D_\delta}=P_{D_\delta}\mathbf{1}_E=P_{D_\delta}P_D\mathbf{1}_E=P_{D_\delta}p_B. 
    \end{equation}
    Then $p_{B_{x,\delta}}\in \FF_\re$ follows from \cite[Lemma~5.4~(i)]{F01}. This completes the proof.
\end{proof}

We close this subsection with a useful sufficient condition for $p_B\in \FF_\re$, and for $p^{(\lambda)}_B\in \FF$ when $\lambda>0$.  The semi-Dirichlet form $(\EE,\FF)$ is said to be \emph{regular} if $(E,d)$ is a locally compact separable metric space, $\mu$ is a Radon measure on $E$, and $\FF\cap C_c(E)$ is dense in $\FF$ with respect to the $\EE_1$-norm, and dense in $C_c(E)$ with respect to the uniform norm; see, e.g., \cite{O13}. 

\begin{lem}
Assume that the Borel right process $X$ is associated with a regular semi-Dirichlet form $(\EE,\FF)$ on $L^2(E,\mu)$. Let  $B\in \sB^n(E)$ be such that its closure $\overline{B}$ is compact in $E$. Then $p_B\in \FF_\re$ if $X$ is transient, and $p^{(\lambda)}_B\in \FF$ for any $\lambda>0$. 
\end{lem}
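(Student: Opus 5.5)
The plan is to dominate $p_B$ (resp. $p^{(\lambda)}_B$) by a nice element of $\FF_\re$ (resp. $\FF$) that equals $1$ on a neighbourhood of $\overline{B}$, and then use the projection property of hitting operators from \cite{F01}.

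First, since $(\EE,\FF)$ is regular and $\overline{B}$ is compact, choose a relatively compact open set $G\supset \overline{B}$. Then pick $\varphi\in \FF\cap C_c(E)$ with $0\leq\varphi\leq 1$ and $\varphi=1$ on $\overline{G}$. To construct $\varphi$, take $\psi\in C_c(E)$ with $\psi\geq 2$ on $\overline{G}$. By uniform density, approximate $\psi$ by some $f\in\FF\cap C_c(E)$ with $\|f-\psi\|_\infty<1/2$, and set $\varphi:=(f\wedge 1)^+$. The normal contraction $t\mapsto (t\wedge1)^+$ operates on $\FF$ for semi-Dirichlet forms (Markov property of the form), so $\varphi\in\FF$. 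In the transient case, $\FF\subset\FF_\re$ gives $\varphi\in\FF_\re$.

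Next, write $D:=B\cup B^r$. As in the proof of Lemma~\ref{LM53}, we have $p_B=p_D=P_D\mathbf{1}_E$. Since $\varphi=1$ on $\overline{G}\supset \overline{B}\supset D$, and $X_{\sigma_D}\in \overline{B}$ on $\{\sigma_D<\infty\}$ by right continuity of paths, we get
\[
P_D\varphi(x)=\bE^x[\varphi(X_{\sigma_D});\sigma_D<\infty]=\bP^x[\sigma_D<\infty]=p_B(x).
\]
Now apply \cite[Lemma~5.4~(i)]{F01}, which says $P_Dg\in\FF_\re$ whenever $g\in\FF_\re$ (the hitting distribution is the $\EE$-projection onto functions harmonic off $D$). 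Hence $p_B\in\FF_\re$. For $\lambda>0$, pass to the subprocess $X^{(\lambda)}$, whose form is $(\EE_\lambda,\FF)$ with extended space $\FF$ and which is transient. By \eqref{eq:54}, $p^{(\lambda)}_B$ is the hitting probability of $D$ for $X^{(\lambda)}$. Its hitting operator is $P^{(\lambda)}_D g(x)=\bE^x[\ee^{-\lambda\sigma_D}g(X_{\sigma_D})]$, so $P^{(\lambda)}_D\varphi=p^{(\lambda)}_B$, and the same lemma applied to $(\EE_\lambda,\FF)$ yields $p^{(\lambda)}_B\in\FF$.

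The main obstacle is the hypotheses of \cite[Lemma~5.4]{F01}. That lemma is formulated for transient forms satisfying the strong sector condition (H3'). Regularity of the semi-Dirichlet form alone gives only the weak sector condition, which does hold for $\EE_\lambda$ with $\lambda>0$, so the $\lambda>0$ case is safe. For $\lambda=0$, I expect to need either that the setting tacitly includes (H3'), or a limiting argument: show $p^{(\lambda)}_B\to p_B$ pointwise as $\lambda\downarrow0$ with $\EE(p^{(\lambda)}_B,p^{(\lambda)}_B)$ bounded (via $\EE_\lambda(p^{(\lambda)}_B,p^{(\lambda)}_B)=\nu^{(\lambda)}_B(\overline{B})\le \EE_\lambda(\varphi,\varphi)$-type bounds from the equilibrium/minimality property). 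Then conclude $p_B\in\FF_\re$ by the Banach–Saks/Fatou-type closedness of $\FF_\re$. I would first try the direct projection route and fall back on this limit if needed.
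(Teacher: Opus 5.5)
Your proposal is correct and is essentially the paper's argument. Regularity yields $g=f^+\wedge 1\in\FF$ with $g\equiv 1$ on $\overline{B}$, the hitting probability is identified with a hitting operator applied to $g$, and \cite[Lemma~5.4(i)]{F01} gives membership in $\FF_\re$, with $\lambda>0$ handled through the subprocess $X^{(\lambda)}$; the paper passes through $p_{\overline{B}}=P_{\overline{B}}g$ and then $p_B=P_Dp_{\overline{B}}$, while your direct identity $P_D\varphi=p_B$ (valid because $X_{\sigma_D}\in\overline{B}$ on $\{\sigma_D<\infty\}$) saves one application of that lemma. Your worry about which sector condition \cite[Lemma~5.4]{F01} needs at $\lambda=0$ applies equally to the paper, which invokes the lemma without comment in line with its (H3') convention for the transient case, so the limiting fallback is unnecessary (and would be delicate anyway, since the sector constant of $\EE_\lambda$ may blow up as $\lambda\downarrow 0$).
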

\begin{proof}
    We consider only the transient case, and show that $p_B\in \FF_\re$. Let $D:=B\cup B^r\subset \overline{B}$. As in \eqref{eq:511}, we can obtain that
    \[
p_B=p_D=P_D\mathbf{1}_E=P_DP_{\overline{B}}\mathbf{1}_E=P_Dp_{\overline{B}}. 
    \]
    By \cite[Lemma~5.4(i)]{F01}, it suffices to show that $p_{\overline{B}}\in \FF_\re$. Indeed, using the regularity of $(\EE,\FF)$, we can find a function $f\in \FF\cap C_c(E)$ such that $f>1$ on $\overline{B}$. Setting $g:=f^+\wedge 1\in \FF$, where $f^+:=\max(f,0)$, the definition of semi-Dirichlet form (see \cite[(2.7)]{F01}) gives $g\in \FF$. It then follows from \cite[Lemma~5.4(i)]{F01} that $P_{\overline{B}}g\in \FF_\re$. Note that $X_{\sigma_{\overline{B}}}\in \overline{B}$, $\bP^x$-a.s. for all $x\in E$, and $g|_{\overline{B}}\equiv 1$. Therefore,
    \[
        P_{\overline{B}}g(x)=\bE^x\left[g(X_{\sigma_{\overline{B}}}); \sigma_{\overline{B}}<\infty\right]=\bP^x[\sigma_{\overline{B}}<\infty]=p_{\overline{B}}(x),\quad \forall x\in E. 
    \]
    This shows $p_{\overline{B}}\in \FF_\re$, which completes the proof. 
    \end{proof}

\section{Small-time asymptotics of hitting time distributions}\label{SEC6}

We work under the same standing assumptions as in the previous sections. Namely, $(E,d)$ is a separable metric space whose topology induced by $d$ is Lusin, $\mu$ is a $\sigma$-finite measure on $E$ with full support, and $X$ is a Borel right process on $E$ satisfying (AC). 
As an application of the estimates for the hitting time distribution, we investigate the asymptotic behavior of $\bP^x[\sigma_B\le t]$ as $t\downarrow0$. 

By Blumenthal $0$-$1$ law, for any $B\in \sB^n(E)$,
\[
    \lim_{t\downarrow 0}\bP^x[\sigma_B\leq t]=\bP^x[\sigma_B=0]=\left\lbrace
    \begin{aligned}
    &1,\quad x\in B^r,\\
    &0,\quad x\notin B^r. 
    \end{aligned}
    \right.
\]
In particular, if $x\in  B^r$, then $\bP^x[\sigma_B\leq t]=1$ for any $t>0$ and
\[
    \lim_{t\downarrow 0}\log \bP^x[\sigma_B\leq t]=0. 
\]
Otherwise if $x\notin B^r$, then $\lim_{t\downarrow 0}\log \bP^x[\sigma_B\leq t]=-\infty$.

We now proceed to the first of two more precise asymptotic results, which follows from the upper bound estimate established in Theorem~\ref{THM:5.1}.

\begin{thm}\label{THM61-2}
   Assume that  the volume growth condition \eqref{eq:41} holds  with some constant $\alpha>0$, and the transition density function of $X$ satisfies 
\[
p(t,y,z)\leq  \frac{C_1}{t^{\alpha/\beta}}\exp\left\{-C_2\left(\frac{d(y,z)}{t^{1/\beta}}\right)^{\beta/(\beta -1)}\right\}, \quad \forall  0<t\leq T, \, y,z\in E,
\]
for some $\beta>1$, $T>0$, $C_1>0$ and $C_2>0$, then for any $x\in E$ and $B\in \sB^n(E)$,
\begin{equation}\label{eq:64-2}
        \liminf_{t\downarrow 0}\frac{-t^{1/(\beta-1)}\cdot \left(\log \bP^x[\sigma_B\leq t]\right)}{C_2}\geq  \widetilde{d}(x,B)^{\beta/(\beta-1)}.
\end{equation}
\end{thm}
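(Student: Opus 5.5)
The plan is to derive \eqref{eq:64-2} directly from the upper bound \eqref{eq:33} of Theorem~\ref{THM:5.1}, after checking that the theorem still holds when the heat kernel bound is only assumed for $0<t\le T$, with the estimate then valid for $0<t\le T$.

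\textbf{Step 1: localizing Theorem~\ref{THM:5.1} in time.} I will go through its proof and record every time at which the heat kernel bound \eqref{subGaussianHKUB} is used.
\begin{itemize}
\item In Step~2 it is used at time $t$, in \eqref{eq2:4}.
\item In Step~3 it is used at times $0<s\le t$, in \eqref{eq:48-2}.
\item Step~1 uses only the strong Markov property, through Lemma~\ref{PROB1}.
\item Step~4 is pure calculus in $\lambda$ and $t$.
\end{itemize}
So if $0<t\le T$, only the bound on $(0,T]$ is needed. The constants $\eta$, $C_\delta$, $C^{(1)}_3$ and $C^{(2)}_3$ depend only on $C_0,C_1,C_2,\alpha,\beta,\delta$, not on $T$. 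The reduction from $\widetilde d(x,B)$ to $d(x,B\setminus N)$ at the start of that proof is also unaffected, since it is a statement about a fixed $t$. Hence for every $0<\delta<1$ there is $C_3=C_3(\delta)$ such that
\[
\bP^x[\sigma_B\le t]\le C_3\exp\left\{-C_2(1-\delta)\,\widetilde d(x,B)^{\beta/(\beta-1)}\,t^{-1/(\beta-1)}\right\},\qquad 0<t\le T .
\]
This step is the only place requiring care: one must confirm that nothing in Steps~2--4 silently uses times larger than $t$. I expect it to go through as described.

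\textbf{Step 2: taking logarithms and passing to the limit.} For $0<t\le T$, take logarithms in the bound above and multiply by $-t^{1/(\beta-1)}/C_2<0$. This gives
\[
\frac{-t^{1/(\beta-1)}\log\bP^x[\sigma_B\le t]}{C_2}\ \ge\ (1-\delta)\,\widetilde d(x,B)^{\beta/(\beta-1)}-\frac{t^{1/(\beta-1)}\log C_3}{C_2}.
\]
The last term tends to $0$ as $t\downarrow0$, because $C_3$ does not depend on $t$. Therefore the $\liminf$ is at least $(1-\delta)\widetilde d(x,B)^{\beta/(\beta-1)}$. Letting $\delta\downarrow0$ yields \eqref{eq:64-2}.

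\textbf{Degenerate cases.} If $\bP^x[\sigma_B\le t]=0$, then $\log$ equals $-\infty$ and the inequality holds trivially. If $\widetilde d(x,B)=0$, the claim follows simply because $\log\bP^x[\sigma_B\le t]\le 0$. If $B$ is polar, then $\bP^x[\sigma_B\le t]=0$, so this case is covered by the first one.
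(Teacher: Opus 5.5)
Your proposal is correct and follows the paper's own proof: the paper likewise notes that the argument for Theorem~\ref{THM:5.1} uses the heat kernel bound only at times $0<s\le t$, so \eqref{eq:33} remains valid for small $t$, and the $\liminf$ then follows by taking logarithms and letting $\delta\downarrow 0$. One small correction to your Step~1: the preliminary reduction $\bP^x[\sigma\le t]=\lim_{n\to\infty}\bP^x[\sigma<t+1/n]$ in that proof uses times slightly larger than $t$, so the localized bound is justified for $0<t<T$ rather than $0<t\le T$; this does not affect the $\liminf$ as $t\downarrow 0$.
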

\begin{proof}
    We only consider $B\neq \emptyset$, since the statement holds trivially when $B=\emptyset$. Note that the upper bound estimate of $\bP^x[\sigma_B\leq t]$ established in Theorem~\ref{THM:5.1}  only relies on the the upper bound of $p(s,x,y)$ for all $0<s\leq t$. By using \eqref{eq:33}, we can easily obtain \eqref{eq:64-2}. 
\end{proof}

Another asymptotic result follows from the lower estimate of $\bP^x[\sigma_B\leq t]$. The crucial fact is that $B_{x,\delta}=\{y\in B:d(y,x)\leq \widetilde{d}(x,B)+\delta\}$ is not polar whenever $B$ is not polar. 

\begin{thm}\label{THM61}
   Assume that every $d$-bounded closed subset of $E$ is compact, and that $X$ satisfies one of the following sets of assumptions:
\begin{itemize}
    \item[{\rm(i)}] the symmetry assumptions (H1a) and (H1b);
    \item[{\rm(ii)}] the duality assumptions (H2a) and (H2b);
    \item[{\rm(iii)}] the sector condition (H3), and the associated semi-Dirichlet form $(\EE,\FF)$ of $X$ is regular.
\end{itemize}
 If the transition density function of $X$ satisfies the following short-time lower bound estimate at a fixed point $x\in E$:
\[
p(t,x,y)\geq  \frac{C_1}{t^{\alpha/\beta}}\exp\left\{-C_2\left(\frac{d(x,y)}{t^{1/\beta}}\right)^{\beta/(\beta -1)}\right\}, \quad \forall  0<t\leq T, \, y\in E,
\]
for some $\alpha>0$, $\beta>1$, $C_1>0$, $C_2>0$ and $T>0$, then for any $B\in \sB^n(E)$, 
\[
        \limsup_{t\downarrow 0}\frac{-t^{1/(\beta-1)}\cdot \left(\log \bP^x[\sigma_B\leq t]\right)}{C_2}\leq  \widetilde{d}(x,B)^{\beta/(\beta-1)}.
\]
\end{thm}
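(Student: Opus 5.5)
First dispose of the trivial cases. If $B$ is polar, then the convention gives $\widetilde d(x,B)=\sup_{N\in\sN}d(x,\emptyset)=+\infty$, and there is nothing to prove. If $x\in B^r$, then $\bP^x[\sigma_B\le t]=1$ and the left-hand side equals $0$. So assume $B$ is non-polar, so that $\widetilde d(x,B)<\infty$. Fix $\delta>0$ and put $B_{x,\delta}=\{y\in B:d(y,x)\le\widetilde d(x,B)+\delta\}$. This set is nearly Borel and non-polar by Lemma~\ref{LM25}. It is $d$-bounded, so by the standing assumption its closure $\overline{B}_{x,\delta}$ is compact.

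In each of the three settings we apply the lower bound theorem to the set $B_{x,\delta}$, with a fixed $\lambda>0$ (say $\lambda=1$) to avoid any transience requirement.
\begin{itemize}
\item[(i)] Compactness of the closure gives $\mathrm{Cap}_{(1)}(B_{x,\delta})<\infty$, so Theorem~\ref{THM:4.2} applies.
\item[(ii)] Theorem~\ref{THM52} applies directly.
\item[(iii)] The last lemma of Section~\ref{SEC52} (regular semi-Dirichlet form, compact closure) gives $p^{(1)}_{B_{x,\delta}}\in\FF$, so Theorem~\ref{THM:5.8}(1) applies.
\end{itemize}
Here $\widetilde d(x,B_{x,\delta})=\widetilde d(x,B)$, or we may simply apply the theorems to $B$ itself after noting that their proofs only use $B_{x,\delta}$; in setting (i) it is cleaner to apply Theorem~\ref{THM:4.2} to $B':=B_{x,\delta}$. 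In every case we obtain, for $0<t\le T$,
\[
\bP^x[\sigma_B\le t]\ge K_\delta\, \frac{C_1C_3}{t^{(\alpha-\beta)/\beta}}\exp\Big\{-(1+\rho)C_2\Big(\frac{\widetilde d(x,B)+2\delta}{t^{1/\beta}}\Big)^{\beta/(\beta-1)}\Big\},
\]
where $K_\delta$ is $\mathrm{Cap}_{(1)}$ or $\nu^{(1)}$ of the relevant set, divided by $\re^{T}$.

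The key point is that $K_\delta>0$. This follows from non-polarity together with Remark~\ref{RM42} and its $\lambda$-analogue in case (i). In cases (ii) and (iii), we note that if the total mass $\nu^{(1)}(\overline{B}_{x,\delta})$ vanished, then the representation $p^{(1)}_{B_{x,\delta}}=U_1\nu^{(1)}$ would force $\bE^y[\re^{-\sigma}]\equiv0$, i.e.\ $B_{x,\delta}$ would be polar. I expect this positivity check across the three frameworks to be the only non-routine point.

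Next, take logarithms and multiply by $-t^{1/(\beta-1)}/C_2$. The terms $\log K_\delta$, $\log(C_1C_3)$, and $-\frac{\alpha-\beta}{\beta}\log t$ all become $o(1)$ after multiplication by $t^{1/(\beta-1)}$, because $t^{1/(\beta-1)}\log t\to0$. Hence
\[
\limsup_{t\downarrow0}\frac{-t^{1/(\beta-1)}\log\bP^x[\sigma_B\le t]}{C_2}\le(1+\rho)\big(\widetilde d(x,B)+2\delta\big)^{\beta/(\beta-1)}.
\]
Finally, let $\rho\downarrow0$ and $\delta\downarrow0$; this is legitimate since the left-hand side does not depend on these parameters, and it yields the claim.
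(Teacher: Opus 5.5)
Your proposal is correct and follows essentially the paper's own argument: apply the $\lambda=1$ lower bounds (Theorems~\ref{THM:4.2}, \ref{THM52}, \ref{THM:5.8}) to $B_{x,\delta}$, whose closure is compact, use Lemma~\ref{LM25} to get a strictly positive capacity or equilibrium mass, take logarithms, and let the parameters tend to zero. The differences are only cosmetic: the paper sets $\rho=\delta$ rather than sending the two parameters to zero separately, and your $2\delta$ slack is unnecessary, because $\widetilde d(x,B_{x,\delta})=\widetilde d(x,B)$ gives $(B_{x,\delta})_{x,\delta}=B_{x,\delta}$.
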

\begin{proof}
We only consider the case that $B$ is not polar. 
According to Lemma~\ref{LM41}, Lemma~\ref{LM51} and Corollary~\ref{COR55}, for any $\delta>0$, there exists a finite measure $\nu^{(1)}_{B_{x,\delta}}$ such that 
\[
    p^{(1)}_{B_{x,\delta}}(x)=\int_E u_1(x,y)\nu^{(1)}_{B_{x,\delta}}(\dd y). 
\]
Since $B_{x,\delta}$ is not polar by Lemma~\ref{LM25}, it follows that $\nu^{(1)}_{B_{x,\delta}}(\overline{B}_{x,\delta})=\nu^{(1)}_{B_{x,\delta}}(E)>0$. 
In particular, 
taking $\rho=\delta>0$ and $\lambda=1$ in \eqref{eq:45} for the symmetry case (and the corresponding estimates in Theorems~\ref{THM52} and \ref{THM:5.8} for the other two cases), we obtain that for any $0<t\leq T$,
   \[
      - \log \bP^x[\sigma_B\leq t]\leq -\log\left(C_1C_3\cdot \nu^{(1)}_{B_{x,\delta}}(\overline{B}_{x,\delta})\right)+ T+\frac{\alpha-\beta}{\beta}\log t+(1+\delta)C_2\left(\frac{\widetilde{d}(x,B)+\delta}{t^{1/\beta}}\right)^{\beta/(\beta-1)}.
   \]
   (Note that $\overline{B}_{x,\delta}$ is $d$-bounded closed and hence, compact in $E$. Therefore, $B_{x,\delta}$ satisfies all the conditions in Theorems \ref{THM52} and \ref{THM:5.8}.)
   It follows that
   \begin{equation}\label{eq:62-2}
       \limsup_{t\downarrow 0}\frac{-t^{1/(\beta-1)}\cdot \left(\log \bP^x[\sigma_B\leq t]\right)}{C_2}\leq (1+\delta)(\widetilde{d}(x,B)+\delta)^{\beta/(\beta-1)}. 
   \end{equation}
   Then letting $\delta\downarrow 0$ gives
   \[
       \limsup_{t\downarrow 0}\frac{-t^{1/(\beta-1)}\cdot \left(\log \bP^x[\sigma_B\leq t]\right)}{C_2}\leq \widetilde{d}(x,B)^{\beta/(\beta-1)}.
   \]
   This completes the proof. 
\end{proof}
\begin{rmk}
\begin{itemize}
 \item[(1)] If the closure $\overline{B}$ of $B$ is compact in $E$, then the assumption that every $d$-bounded closed subset of $E$ is compact can be removed. 
\item[(2)] Under the same assumptions as in Theorem~\ref{THM61}, let $B=G$ be an open set and consider a point $x\in\partial G$. In particular, $\widetilde d(x,G)=0$. If $x\notin G^r$, namely, $\lim_{t\downarrow 0}\bP^x[\sigma_G\leq t]=\bP^x[\sigma_G=0]=0$, then Theorem~\ref{THM61} immediately yields
\[
    \lim_{t\downarrow 0}\left(-t^{1/(\beta-1)}\cdot \left(\log \bP^x[\sigma_G\leq t]\right)\right)=0.
\]
A concrete example illustrating this phenomenon is provided in Section~\ref{SEC74}.
\end{itemize}
\end{rmk}

The two theorems above together imply the following result. 

\begin{corollary}\label{COR63}
    Assume that the assumptions of Theorems~\ref{THM61-2} and \ref{THM61} are both satisfied. If the transition density function of $X$ satisfies
      \begin{equation}\label{eq:63-2}
    p(t,y,z)\asymp \frac{1}{t^{\alpha/\beta}}\exp \left\{-C_2\left(\frac{d(y,z)}{t^{1/\beta}}\right)^{\beta/(\beta-1)} \right\},\quad 0<t\leq T, y,z\in E
 \end{equation}
 for some $\alpha>0,\beta>1, C_2>0$ and $T>0$, then
 for any $x\in E$ and $B\in \sB^n(E)$,
    \begin{equation}\label{eq:68}
        \lim_{t\downarrow 0}\frac{-t^{1/(\beta-1)}\cdot \left(\log \bP^x[\sigma_B\leq t]\right)}{C_2}= \widetilde d(x,B)^{\beta/(\beta-1)}.
    \end{equation}
\end{corollary}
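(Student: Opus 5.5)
The plan is simply to combine Theorems~\ref{THM61-2} and \ref{THM61}, after checking that the two-sided estimate \eqref{eq:63-2} supplies the hypotheses each of them needs. First, read \eqref{eq:63-2} as a pair of inequalities with multiplicative constants: there are $C_1,C_1'>0$ such that
\[
\frac{C_1'}{t^{\alpha/\beta}}\exp\left\{-C_2\left(\frac{d(y,z)}{t^{1/\beta}}\right)^{\beta/(\beta-1)}\right\}\le p(t,y,z)\le \frac{C_1}{t^{\alpha/\beta}}\exp\left\{-C_2\left(\frac{d(y,z)}{t^{1/\beta}}\right)^{\beta/(\beta-1)}\right\}
\]
for all $0<t\le T$ and $y,z\in E$. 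The key point is that the same exponent constant $C_2$ appears on both sides. The notation $\asymp$ only allows the prefactors to differ, and the constants $C_1,C_1'$ never enter the limits in Theorems~\ref{THM61-2} and \ref{THM61}.

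Next, apply the upper inequality together with the volume growth condition \eqref{eq:41}, which is part of the hypotheses of Theorem~\ref{THM61-2} and assumed here. Theorem~\ref{THM61-2} then gives
\[
\liminf_{t\downarrow0}\frac{-t^{1/(\beta-1)}\log\bP^x[\sigma_B\le t]}{C_2}\ge \widetilde d(x,B)^{\beta/(\beta-1)}.
\]
The lower inequality, specialized to the fixed point $x$, is exactly the short-time lower bound required in Theorem~\ref{THM61}. The structural assumptions there (one of (i)--(iii), and compactness of $d$-bounded closed sets) are assumed as well. That theorem gives
\[
\limsup_{t\downarrow0}\frac{-t^{1/(\beta-1)}\log\bP^x[\sigma_B\le t]}{C_2}\le\widetilde d(x,B)^{\beta/(\beta-1)}.
\]
Since the $\liminf$ is at least and the $\limsup$ is at most the same quantity, the limit exists and equals $\widetilde d(x,B)^{\beta/(\beta-1)}$, which is \eqref{eq:68}.

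Some degenerate cases need a remark. If $B$ is polar, then $\bP^x[\sigma_B\le t]=0$, so $-\log$ is $+\infty$; in that case $\widetilde d(x,B)=\sup_N d(x,\emptyset)=+\infty$ by convention, and both sides agree. Theorem~\ref{THM61} is stated only for non-polar $B$ in substance, but in the polar case the $\liminf$ bound already gives the (infinite) limit. If $x\in B^r$, then $\widetilde d(x,B)=0$ by Proposition~\ref{PRO26} (or directly), and both sides are $0$.

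The only step needing care is the identification of constants: we must make sure the exponent constant is shared by the two sides, since a mismatch would only yield two-sided bounds with different constants rather than a limit. This is built into the hypothesis \eqref{eq:63-2}, so there is no real obstacle.
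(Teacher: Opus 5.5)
Your proposal is correct and is exactly the paper's argument. The paper derives the corollary by combining the $\liminf$ bound of Theorem~\ref{THM61-2}, which comes from the upper half of \eqref{eq:63-2} plus volume growth, with the $\limsup$ bound of Theorem~\ref{THM61}, which comes from the lower half at $x$; the shared exponent constant $C_2$ is what makes the two bounds meet, and your extra remarks on polar $B$ and $x\in B^r$ are consistent but unnecessary, since both theorems already cover those cases.
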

\begin{rmk}
  From the two-sided sub-Gaussian heat kernel estimates \eqref{eq:63-2}, 
one can readily deduce the following Varadhan-type small-time asymptotic (see \cite{Varadhan, V67}):
\begin{equation}\label{eq:65}
    \lim_{t\downarrow 0} \frac{-t^{1/(\beta-1)}\cdot \log p(t,y,z)}{C_2}=d(y,z)^{\beta/(\beta-1)},\quad y,z\in E. 
\end{equation}
A question of particular interest, which remains open, is whether the asymptotic behavior of the hitting time distribution in \eqref{eq:68} can be derived directly from such a Varadhan-type asymptotic \eqref{eq:65}. We hope to address this problem in future work.
\end{rmk}

The following example satisfies the asymptotic relation \eqref{eq:68} obtained in Corollary~\ref{COR63}. It also illustrates why $\widetilde{d}(x,B)$, rather than $d(x,B)$, is used in the estimates of $\bP^x[\sigma_B\leq t]$.

\begin{exe}\label{EXA63}
 Let $X$ be the Brownian motion on $\bR^n$ equipped with the Euclidean metric $d(x,y):=|x-y|$, where $n\geq 2$. Then all the assumptions of Corollary~\ref{COR63} are satisfied with $\alpha=n,\beta=2$  and $C_2=1/2$. 
 
 Let $\mathbf{o}$ denote the origin  of $\bR^n$, $\mathbf{1}:=(1,0,\cdots,0)\in \bR^n$ and define
  \[
      B_1:=\{x\in \bR^n: |x|\geq 2\},\quad B_2:=B_1\cup \{\mathbf{1}\}. 
  \]
  Note that the singleton $\{\mathbf{1}\}$ is polar. By Lemma~\ref{LM24}, $\widetilde{d}(x,B_2)=\widetilde{d}(x,B_1)=d(x,B_1)$. 
  Consequently,
  \[
      \lim_{t\downarrow 0}\left(-t \log \bP^\mathbf{o}[\sigma_{B_2}\leq t]\right)=\lim_{t\downarrow 0}\left(-t \log \bP^\mathbf{o}[\sigma_{B_1}\leq t]\right)=\frac{1}{2}d(x,B_1)^2=2, 
  \]
  where the second equality follows from Corollary~\ref{COR63}. This agrees with the well-known small-time asymptotics for Brownian hitting times.
\end{exe}

\section{Examples and applications}\label{Sec-applications}

In this section, we apply the results in previous sections to a few specific types of Markov processes. 

\subsection{Heat kernels with two-sided (sub-)Gaussian estimates}\label{SEC71}

The transition density function of $X$ is said to satisfy the two-sided short-time sub-Gaussian estimates if, for some $T>0$ and positive constants $c_1, c_2, C_1, C_2$,
\begin{equation}\label{eq:71}
    \frac{C_1}{t^{\alpha/\beta}}\exp \left\{-C_2\left(\frac{d(y,z)}{t^{1/\beta}}\right)^{\beta/(\beta-1)} \right\}\leq p(t,y,z)\leq \frac{c_1}{t^{\alpha/\beta}}\exp \left\{-c_2\left(\frac{d(y,z)}{t^{1/\beta}}\right)^{\beta/(\beta-1)} \right\}
\end{equation}
for all $0<t\leq T$ and $y,z\in E$, where $c_2$ and $C_2$ are not necessarily equal. When $\beta=2$, this reduces to the classical Gaussian estimates, with $\alpha$ corresponding to the volume growth exponent (which is typically the dimension of the state space in the Euclidean setting).
Suppose now that the transition density satisfies the above two-sided short-time sub-Gaussian estimates. Then the lower and upper bounds can be applied separately in Theorems~\ref{THM61-2} and~\ref{THM61}, respectively, whenever the additional assumptions required in the corresponding theorems are satisfied. Consequently, for any $x\in E$ and $B\in \sB^n(E)$,
\[
\begin{aligned}
c_2\cdot \widetilde{d}(x,B)^{\beta/(\beta-1)}&\leq 
\liminf_{t\downarrow 0}\left(-t^{1/(\beta-1)}\cdot \left(\log \bP^x[\sigma_B\leq t]\right)\right) \\
&\leq 
 \limsup_{t\downarrow 0}\left(-t^{1/(\beta-1)}\cdot \left(\log \bP^x[\sigma_B\leq t]\right)\right)\leq  C_2\cdot \widetilde{d}(x,B)^{\beta/(\beta-1)}.
\end{aligned}\]

There is a vast literature on two-sided (sub-)Gaussian heat kernel estimates. Most of the known examples are established in the symmetric setting and therefore correspond to the first case of Theorem~\ref{THM61}. Typical examples include symmetric uniformly elliptic diffusion processes on Euclidean domains \cite{Aro67, Aro68, Dav89,  FS86}, symmetric diffusion processes on complete Riemannian manifolds satisfying the volume doubling property and the Poincar\'e inequality \cite{Gri94, Sal92, Sal10}, and Brownian motions on fractals such as the Sierpinski gasket and Sierpinski carpet \cite{Barlow1,BaPe88,FHK,Ki01,Kum93}. 
 More generally, two-sided sub-Gaussian estimates have been obtained for symmetric diffusions associated with strongly local regular Dirichlet forms on a broad class of metric measure spaces; see \cite{BBK06,GT12}.

 In contrast to the symmetric examples discussed above, two-sided Gaussian heat kernel estimates are also available for certain non-symmetric diffusion processes. A typical example is the diffusion process on $\mathbb{R}^n$ with generator
\[
\frac{1}{2}\Delta+b\cdot\nabla .
\]
When the drift term $b$ belongs to the Kato class (in particular, when $b$ is bounded and measurable), it was proved in \cite{KS06} that the corresponding transition density satisfies two-sided Gaussian heat kernel estimates. Moreover, these processes fit into the non-symmetric framework considered in Theorem~\ref{THM61}. More precisely, they satisfy both the duality assumption (H2a) and the sector condition (H3), corresponding to the second and third settings, respectively, in Theorem~\ref{THM61}. The construction of the corresponding regular non-symmetric Dirichlet forms can be found in \cite[Theorem~3.5]{ChenZhao}.

We emphasize that all the examples discussed above correspond to diffusions associated with local or strongly local (semi-)Dirichlet forms. Nevertheless, the framework of Theorems~\ref{THM61-2} and \ref{THM61} is not restricted to this setting and also applies to processes with jumps, including those associated with non-local (semi-)Dirichlet forms.

\subsection{Reflected Brownian motion on convex domains}

We next give an example in which the constants in the two-sided Gaussian estimates can be chosen to be identical, namely, $c_2=C_2$. This is in contrast to the previous examples, where the upper and lower heat kernel estimates generally involve different exponential constants.

We briefly recall some facts about reflected Brownian motion on possibly nonsmooth domains; see, for example, \cite{BurdzyChen}. Let $D\subset\mathbb{R}^n$ be a domain and consider the bilinear form
\[
\mathcal{E}(f,g)=\frac12\int_D \nabla f(x)\cdot\nabla g(x)\,\dd x
\]
on $L^2(D)$, with domain
\[
\mathcal{D}(\mathcal{E})=W^{1,2}(D).
\]
Assume that
\begin{equation}\label{RBM-regular-DF-condition}
C^1_c(\overline{D}) \text{ is dense in } (W^{1,2}(D),\, \|\cdot\|_{1,2}),
 \end{equation}
where $\|\cdot\|_{1,2}:= \|\cdot\|_{2}+\|\nabla \cdot\|_{2}$ and $\|\cdot\|_2$ denotes the norm on $L^2(D)$. Then $(\mathcal{E},\mathcal{D}(\mathcal{E}))$ is a regular Dirichlet form on $L^2(\overline{D})$ and hence admits an associated Hunt process, which is the reflected Brownian motion on $\overline D$.

The above regularity condition \eqref{RBM-regular-DF-condition} is satisfied, in particular, when $D$ is a $W^{1,2}$-extension domain. Since Lipschitz domains are uniform domains and uniform domains are $W^{1,2}$-extension domains (see \cite{PJones}), this framework covers a large class of possibly nonsmooth domains. 

For such domains, the Nash inequality
\begin{equation*}
\|f\|{}_{2}^{1+\frac{2}{n}}\le C\cdot \|\nabla f\|{}_{2} \|f\|{}_{1}^{2/n}, \quad \text{for any } f \in L^1(D) \cap W^{1,2}(D)
\end{equation*}
holds, where $\|\cdot\|_1$ denotes the norm on $L^1(D)$. Combined with Davies’ method, this yields the following Gaussian upper bound for the heat kernel of reflected Brownian motion:
\[
p(t,x,y)\leq \frac{c_1}{t^{n/2}}
\exp\left\{-\frac{d(x,y)^2}{2t}\right\},\qquad t>0,
\]
where $d(\cdot,\cdot)$ denotes the geodesic distance on $\overline D$ induced by the Euclidean metric.

For convex domains, a matching Gaussian lower bound is available. More precisely, Pascu \cite{Pascu} proved that the heat kernel of reflected Brownian motion on convex Euclidean domains satisfies
\[
p(t,x,y)\ge
\frac1{(2\pi t)^{n/2}}
\exp\left\{-\frac{|x-y|^2}{2t}\right\},
\qquad t>0.
\]
Since the geodesic distance on a convex domain coincides with the Euclidean distance, we obtain the two-sided Gaussian estimate
\[
\frac{1}{(2\pi t)^{n/2}}
\exp\left\{-\frac{|x-y|^2}{2t}\right\}
\le p(t,x,y)
\le
\frac{c_1}{t^{n/2}}
\exp\left\{-\frac{|x-y|^2}{2t}\right\},
\qquad t>0.
\]
In particular, the exponential constants in the upper and lower estimates are identical in this case.

As an immediate consequence of Corollary~\ref{COR63}, we obtain the following result.

\begin{thm}\label{THM:6.1}
Let $D\subset \IR^n$ be a convex domain that satisfies \eqref{RBM-regular-DF-condition}, and let  $d$ denote the Euclidean distance. Let $(X_t)_{t\ge 0}$ be the reflected Brownian motion on $\overline{D}$. For any   $x\in E$ and nearly Borel set $B\subset \overline{D}$,
\begin{equation*}
\lim_{t\downarrow 0} \left(-t\cdot \log \IP^x[\sigma_B\le t]\right)=\frac{\widetilde{d}(x, B)^2}{2},
\end{equation*}
where $\widetilde{d}(x,B)$ is defined as \eqref{eq:63}.
\end{thm}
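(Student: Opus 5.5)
We will obtain Theorem~\ref{THM:6.1} by checking the hypotheses of Corollary~\ref{COR63} for the reflected Brownian motion $X$ on $E=\overline{D}$. Take $\mu$ to be Lebesgue measure restricted to $\overline{D}$ and $d$ the Euclidean metric, with $\alpha=n$, $\beta=2$ and $C_2=1/2$. Then $\beta/(\beta-1)=2$ and $t^{1/(\beta-1)}=t$, so \eqref{eq:68} becomes exactly
\[
\lim_{t\downarrow 0}\left(-t\log\bP^x[\sigma_B\le t]\right)=\tfrac12\widetilde d(x,B)^2 .
\]

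\emph{Step 1: setting and symmetry.} Under \eqref{RBM-regular-DF-condition}, the form $(\EE,W^{1,2}(D))$ is a regular symmetric Dirichlet form on $L^2(\overline D)$; note $\partial D$ is Lebesgue-null since $D$ is convex. Its Hunt process $X$ therefore satisfies (H1a) and (H1b). The two-sided Gaussian bound gives a transition density, so (AC) holds after choosing the version of $p$ given by the continuous heat kernel. Closed $d$-bounded subsets of $\overline D\subset\bR^n$ are compact, so alternative (i) of Theorem~\ref{THM61} applies.

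\emph{Step 2: heat kernel bounds.} Pascu's lower bound reads $p(t,x,y)\ge (2\pi)^{-n/2}t^{-n/2}\exp\{-\frac12(|x-y|/t^{1/2})^2\}$ for every $x$ and every $t\in(0,T]$. This is \eqref{SubGaussianHKLB} at each point $x$. For the upper bound, convexity makes the geodesic distance equal to $|x-y|$, so the Nash/Davies bound yields \eqref{eq:63-2} with the same constant $C_2=1/2$ on both sides.

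\emph{Step 3: volume growth.} We need \eqref{eq:41}, namely $\mu(B(x,r))\le C_0r^n$, and this holds trivially since $\mu(B(x,r)\cap\overline D)\le \omega_n r^n$. Theorem~\ref{THM61-2} only uses this global volume upper bound together with the short-time kernel upper bound. The one subtlety is that Theorem~\ref{THM:5.1} was stated with the upper bound for all $t>0$, whereas here it is needed only for $t\le T$. The Davies bound given above holds for all $t>0$, so nothing extra is needed; alternatively, the remark in the proof of Theorem~\ref{THM61-2} that only $s\le t$ matters covers this point.

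\emph{Step 4: conclusion.} Corollary~\ref{COR63} now gives \eqref{eq:68} for every $x\in\overline D$ and every nearly Borel $B$. If $B$ is polar, both sides are degenerate: the right side equals $\infty$ under the convention $\sup$ over $\sN$ including $B$ itself, and the left side is $-t\log 0=\infty$.

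The main obstacle is the verification in Step 1, namely identifying the Hunt process with the given continuous heat kernel and ensuring (AC) holds for every starting point and not merely quasi-everywhere. We handle this by taking the version of $X$ whose transition function is defined from the jointly continuous kernel, which exists by the Gaussian bounds and the semigroup property. The rest is bookkeeping of constants.
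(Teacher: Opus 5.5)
Your proposal is correct and follows the paper's route: the theorem is obtained directly from Corollary~\ref{COR63} with $\alpha=n$, $\beta=2$, $C_2=1/2$, using Pascu's lower bound, the Nash--Davies upper bound (geodesic and Euclidean distance coincide by convexity), and the volume bound $\mu(B(x,r))\le \omega_n r^n$. The one soft spot is your claim that a jointly continuous kernel follows from the Gaussian bounds and the semigroup property; it actually needs a regularity input such as the parabolic Harnack inequality, which the two-sided Gaussian bounds do supply for this strongly local form, and the paper takes this point for granted as well.
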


\subsection{Brownian motion on Riemannian manifolds}

The previous example provides a concrete example of Corollary~\ref{COR63}, where the exponential constants in the two-sided Gaussian estimates coincide. The following example shows that the same asymptotic relation remains valid when the two constants are not identical but can be chosen arbitrarily close.

Let $M$ be an $n$-dimensional complete Riemannian manifold with non-negative Ricci curvature. Let $\Delta_M$ denote the Laplace-Beltrami operator on $M$, and let $p_M(t,x,y)$ be the heat kernel associated with the heat semigroup generated by $\frac{1}{2}\Delta_M$. In their celebrated work \cite{LiYau}, Li and Yau proved that, for any $\epsilon>0$, there exist constants $C_1,c_1>0$, depending on $\epsilon$, such that
\begin{equation}\label{eq:72}
\frac{C_1}{\operatorname{Vol}(B(x,\sqrt{t}))}
\exp\left\{-\frac{d(x,y)^2}{(2-\epsilon)t}\right\}
\le p_M(t,x,y)
\le
\frac{c_1}{\operatorname{Vol}(B(x,\sqrt{t}))}
\exp\left\{-\frac{d(x,y)^2}{(2+\epsilon)t}\right\},
\end{equation}
where $\operatorname{Vol}$ denotes the volume measure on $M$ and $d$ is the Riemannian distrance, for all $x,y\in M$ and $t>0$.
Note that $\mathrm{Vol}$ satisfies the volume growth condition \eqref{eq:41} with $\mu=\mathrm{Vol}$ and $\alpha=n$. We also point out that \eqref{eq:72} is slightly different from the two-sided sub-Gaussian estimate \eqref{eq:71}, since the coefficients in \eqref{eq:72} involve the volume term $\operatorname{Vol}(B(x,\sqrt{t}))$, rather than the power $t^{n/2}$.

\begin{thm}\label{THM:6.2}
Let $M$ be an $n$-dimensional complete Riemannian manifold with non-negative Ricci curvature and $\Delta _M$ be the Laplace-Beltrami operator on it. 
 Let $X=(X_t)_{t\ge 0}$ be the Brownian motion on $M$ associated with the generator $\frac{1}{2}\Delta _M$. Then for any $x\in M$ and nearly Borel set $B\subset M$, 
\[
    \lim_{t\downarrow 0}\left(-t \cdot \log \bP^x[\sigma_B\leq t] \right)=\frac{1}{2}\widetilde{d}(x,B)^2,
\]
where $\widetilde{d}(x,B)$ is defined as \eqref{eq:63}. 
\end{thm}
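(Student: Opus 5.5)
We will prove the two inequalities separately, using Theorems~\ref{THM61-2} and \ref{THM61} once for each $\epsilon>0$, and then let $\epsilon\downarrow 0$. The obstacle is that the Li--Yau bounds \eqref{eq:72} carry the prefactor $\operatorname{Vol}(B(x,\sqrt t))^{-1}$ rather than $t^{-\alpha/\beta}$. They also come with exponents $1/(2\mp\epsilon)$ in place of a single constant $C_2$. We therefore plan to convert \eqref{eq:72} into bounds of the form \eqref{eq:71} with $\alpha=n$, $\beta=2$, losing only an arbitrarily small amount in the exponent constant.

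\emph{Volume comparison.} By Bishop--Gromov with $\mathrm{Ric}\ge 0$ we have $\operatorname{Vol}(B(y,r))\le \omega_n r^n$ for all $y$ and $r$. This gives \eqref{eq:41} with $\alpha=n$ and also $\operatorname{Vol}(B(y,\sqrt t))^{-1}\ge c\,t^{-n/2}$. Hence the lower bound in \eqref{eq:72}, at the fixed point $x$, yields \eqref{SubGaussianHKLB} with $C_2=1/(2-\epsilon)$, $\alpha=n$, $\beta=2$ and any $T$.

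For the upper bound we need $\operatorname{Vol}(B(y,\sqrt t))\ge c\,t^{n/2}$ uniformly in $y$, and this can fail on noncompact manifolds with small volume. To avoid that, we absorb the volume factor into the Gaussian. By volume doubling (again Bishop--Gromov), $\operatorname{Vol}(B(y,\sqrt t))\ge \operatorname{Vol}(B(y,R))(\sqrt t/R)^n$ for $t\le R^2$. The trouble is that a lower bound on $\operatorname{Vol}(B(y,R))$ is still needed and is not uniform in $y$.

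A cleaner route is to localize. Since $\widetilde d(x,B)$ and $\bP^x[\sigma_B\le t]$ for small $t$ only involve a neighborhood of $x$, we fix $R>\widetilde d(x,B)+1$. We then note that $\bP^x[\sigma_B\le t]\le \bP^x[\sigma_{B\cap B(x,R)}\le t]+\bP^x[\tau_{B(x,R)}\le t]$. On $B(x,2R)$, the doubling property gives $\operatorname{Vol}(B(y,\sqrt t))\ge c_R\, t^{n/2}$ for $t\le 1$, and this yields the sub-Gaussian upper bound for $y,z$ in a compact region. We will rerun the proof of Theorem~\ref{THM:5.1} in this setting; it only uses the heat kernel upper bound at points reached before $\sigma$, namely $x$ and $z$ on a sphere of radius $r\le R$. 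The exit term is handled by the same theorem applied to the ball, and it decays like $\exp\{-c R^2/t\}$, which is negligible because $R>\widetilde d(x,B)$. This reduction is the main technical point.

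\emph{Asymptotics.} With these bounds in hand, Theorem~\ref{THM61-2} gives
\[
\liminf_{t\downarrow0}\bigl(-t\log\bP^x[\sigma_B\le t]\bigr)\ge \frac{\widetilde d(x,B)^2}{2+\epsilon}.
\]
For the other direction, $M$ is complete, so closed bounded sets are compact (Hopf--Rinow), and the Brownian motion is symmetric with a regular Dirichlet form. Case (i) of Theorem~\ref{THM61} therefore applies and gives
\[
\limsup_{t\downarrow0}\bigl(-t\log\bP^x[\sigma_B\le t]\bigr)\le \frac{\widetilde d(x,B)^2}{2-\epsilon}.
\]
If $B$ is polar, both sides are infinite or trivial, by convention. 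Letting $\epsilon\downarrow0$ yields the limit $\widetilde d(x,B)^2/2$.
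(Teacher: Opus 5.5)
Your proposal is correct. The $\limsup$ half is exactly the paper's argument: the Li--Yau lower bound at $x$ combined with $\operatorname{Vol}(B(x,\sqrt t))\le \omega_n t^{n/2}$, followed by case (i) of Theorem~\ref{THM61}, where you rightly invoke Hopf--Rinow for the compactness hypothesis.

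For the $\liminf$ half, you and the paper both rerun the proof of Theorem~\ref{THM:5.1}, but the paper does it globally. In Step~2 it carries the $x$-dependent prefactor $c_1t^{n/2}/\operatorname{Vol}(B(x,\sqrt t))$ to the end and removes it using $t\log\operatorname{Vol}(B(x,\sqrt t))\to 0$. It also asserts the Step~3 bound \eqref{eq:76} for all $z\in B(x,r)^c$ at once, with only a brief appeal to the Step~3 argument.

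You instead localize:
\begin{itemize}
\item you split off the exit from $B(x,R)$;
\item you use continuity of paths to place $X_\sigma$ on the sphere $\{d(x,\cdot)=r\}$ with $r\le R$;
\item you get a genuine sub-Gaussian upper bound on $B(x,2R)$ from $\operatorname{Vol}(B(y,\sqrt t))\ge \operatorname{Vol}(B(x,R))(\sqrt t/3R)^n$;
\item you estimate the exit term by the same localized argument with $r=R$, which gives $O(\exp\{-(1-\delta)R^2/((2+\epsilon)t)\})$, negligible since $R>\widetilde d(x,B)$.
\end{itemize}
This buys a completely explicit uniformity in $z$ for Step~3. The price is an extra decomposition and reliance on path continuity, so the argument is tied to diffusions.

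A few smaller points:
\begin{itemize}
\item Your final paragraph should not cite Theorem~\ref{THM61-2} as stated, because its hypotheses are global bounds you do not have. Read the $\liminf$ bound directly off your localized estimate, using $\widetilde d(x,B\cap B(x,R))\ge\widetilde d(x,B)$ and constants uniform in $r\le R$.
\item The localization is in fact unnecessary. In Steps~2 and~3 the volume enters only through ratios such as $\operatorname{Vol}(B(z,(k+1)\eta\sqrt s))/\operatorname{Vol}(B(z,\sqrt s))\le((k+1)\eta)^n$ (for $\eta\ge1$), which Bishop--Gromov controls uniformly in $z$. So the non-uniformity of $\operatorname{Vol}(B(y,1))$ you worried about never actually enters.
\item Step~3, in either version, uses stochastic completeness to pass from the tail bound to $\int_{B(z,\eta\sqrt s)}p_M(s,z,y)\,\dd y\ge 1/2$. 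This holds for $\mathrm{Ric}\ge0$, but it is worth stating.
\end{itemize}
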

\begin{proof}
    Note that the Brownian motion on $M$ satisfies the symmetry assumptions (H1a) and (H1b); see, e.g., \cite[\S2.2.5]{CF}. Since
 the volume measure satisfies the volume growth condition \eqref{eq:41} with $\alpha=n$, the lower bound in \eqref{eq:72} implies that
 \begin{equation}\label{eq:74-2}
     p_M(t,x,y)\geq \frac{C_1}{C_0 t^{n/2}}\exp\left\{-\frac{d(x,y)^2}{(2-\epsilon)t}\right\},\quad \forall t>0,x,y\in M.
 \end{equation}
Applying Theorem~\ref{THM61-2} with the above heat kernel lower bound, we obtain that, for the same $\epsilon>0$,
    \[
    \limsup_{t\downarrow 0}\left(-t \cdot \log \bP^x[\sigma_B\leq t] \right)\leq \frac{1}{2-\epsilon}\widetilde{d}(x,B)^2.
\]
Letting $\epsilon\downarrow 0$ gives 
\begin{equation}\label{eq:75-3}
    \limsup_{t\downarrow 0}\left(-t \cdot \log \bP^x[\sigma_B\leq t] \right)\leq \frac{1}{2}\widetilde{d}(x,B)^2.
\end{equation}

To prove the converse statement, we only need to make a slight modification to the proof of Theorem~\ref{THM:5.1}. Fix $\delta>0$ and $\epsilon>0$. The arguments before and in the first step remain valid, and in particular, \eqref{eq2.1} continues to hold with $p_M(t,x,y)$ in place of $p(t,x,y)$. Moreover, by the upper bound estimate
\[
    p_M(t,x,y)\leq \frac{c_1 t^{n/2}}{\mathrm{Vol}(B(x,\sqrt{t}))}\cdot \frac{1}{t^{n/2}}\exp\left\{-\frac{d(x,y)^2}{(2+\epsilon)t}\right\}
\]
and the argument in the second step, we obtain that, for any $\lambda>0$, $t>0$ and $x\in E$,
\begin{equation}\label{eq:75-2}
    \int_E p_M(t,x,y)\re^{\lambda d(x,y)} \mathrm{Vol}(\dd y)\leq \frac{c_1t^{n/2}}{\mathrm{Vol}(B(x,\sqrt{t}))}C_0C_\delta \exp \left\{ \frac{(2+\epsilon) t}{4(1-\delta/2)}\lambda^2\right\},
\end{equation}
where $C_\delta$ is the constant given by \eqref{eq:47} with $C_2=1/(2+\epsilon)$, $\alpha=n$ and $\beta=2$. Next, let $q(t,x,y)$ denote the lower bound  appearing on the right-hand side of \eqref{eq:74-2}. Applying the argument in the third step to this $q(t,x,y)$, with $C_1/C_0$ and $1/(2-\epsilon)$ playing the roles of the corresponding constants in the sub-Gaussian bound, we can choose $\eta>0$, depending only on $n$, $C_0, C_1$ and $\epsilon$, such that 
\begin{equation}\label{eq:76}
    \inf_{0<s\leq t,z\in B(x,r)^c}\int_E p_M(s,z,y)\re^{-\lambda d(z,y)}\mathrm{Vol}(\dd y)\geq \frac{1}{2}\re^{-\lambda \eta t^{1/2}}.
\end{equation}
Substituting \eqref{eq:75-2} and \eqref{eq:76} into \eqref{eq2.1}, we obtain 
\[
    \bP^x[\sigma<t]\leq \frac{c_1 t^{n/2}}{\mathrm{Vol}(B(x,\sqrt{t}))}\cdot 2C_0C_\delta \exp\left\{(\eta t^{1/2}-r)\cdot \lambda+\frac{(2+\epsilon) t}{4(1-\delta/2)}\lambda^2 \right\}.
\]
Using the same argument as in the fourth step to estimate the exponential term, we obtain constants $C^{(1)}_3>0$  and $C^{(2)}_3>0$, as in  \eqref{eq:412} and \eqref{eq:413}, respectively, depending on $n$, $C_0, C_1, \epsilon, \delta$, such that
\[
    \bP^x[\sigma<t]\leq  \max\left\{C^{(3)}_1,\frac{c_1 t^{n/2}}{\mathrm{Vol}(B(x,\sqrt{t}))}\cdot 2C_0C_\delta \re^{C^{(3)}_2/(2+\epsilon)}\right\} \exp\left\{-\frac{1-\delta}{2+\epsilon}\cdot \frac{r^2}{t}\right\}.
\]
Combining this estimate with the argument preceding the first step, we conclude that for any $t>0$, $x\in E$ and $B\in \sB^n(E)$, 
\begin{equation}\label{eq:77}
    \bP^x[\sigma_B\leq t]\leq  \max\left\{C^{(3)}_1,\frac{c_1 t^{n/2}}{\mathrm{Vol}(B(x,\sqrt{t}))}\cdot 2C_0C_\delta \re^{C^{(3)}_2/(2+\epsilon)}\right\} \exp\left\{-\frac{1-\delta}{2+\epsilon}\cdot \frac{\widetilde{d}(x,B)^2}{t}\right\},
\end{equation}
where $\delta>0$ and $\epsilon>0$ are arbitrary. 

Fix $x,B,\delta$ and $\epsilon$.  It follows from \eqref{eq:77} that
\begin{equation}\label{eq:78}
    \liminf_{t\downarrow 0}\left(-t\cdot \log \bP^x[\sigma_B\leq t]\right)\geq \liminf_{t\downarrow 0}\left(-t \cdot \log C_{x,t}\right)+\frac{1-\delta}{2+\epsilon} \widetilde{d}(x,B)^2,
\end{equation}
where $C_{x,t}$ denotes the  coefficient appearing on the right-hand side of \eqref{eq:77}.  A direct calculation gives, for $t>0$,
\[
    t\cdot \log C_{x,t}=  \max \left\{t\cdot \log C^{(3)}_1,t\cdot \log\left( 2c_1C_0C_\delta \re^{C^{(3)}_2/(2+\epsilon)}\right)+\frac{n}{2}t\log t-t\log \left( \mathrm{Vol}(B(x,\sqrt{t}))\right) \right\}.
\]
By the Bishop–Gromov inequality, there exists a constant $c_0>0$, possibly depending on  $x$, such that
\[
    \mathrm{Vol}(B(x,s))\geq c_0 s^n,\quad \forall 0<s<1. 
\]
This, together with the volume growth condition \eqref{eq:41}, yields $\lim_{t\downarrow 0}t\log \left( \mathrm{Vol}(B(x,\sqrt{t}))\right)=0$. Consequently,
\[
    \liminf_{t\downarrow 0}(-t\cdot \log C_{x,t})=\lim_{t\downarrow 0}(-t\cdot \log C_{x,t})=0. 
\]
It then follows from \eqref{eq:78} that
\[
    \liminf_{t\downarrow 0}(-t\cdot \log \bP^x[\sigma_B\leq t]\geq \frac{1-\delta}{2+\epsilon} \widetilde{d}(x,B)^2.
\]
Letting $\delta,\epsilon \downarrow 0$, we obtain 
\[
    \liminf_{t\downarrow 0}(-t\cdot \log \bP^x[\sigma_B\leq t]\geq \frac{1}{2} \widetilde{d}(x,B)^2.
\]
Combining this with the estimate \eqref{eq:75-3}  completes the proof.
\end{proof}

\subsection{Thorn-like sets for Brownian motion}\label{SEC74}

Consider the Brownian motion on $\bR^n$ with $n\geq 3$. 
Let $T_h$ denote the thorn
\[
    T_h:=\left\{(x_1,\cdots,x_n): x_1\geq 0\text{ and } x_2^2+\cdots+x^2_n\leq h^2(x_1)\right\} \subset \bR^n,
\]
where $h(r), r\geq 0$, is a continuous function such that $h(r)>h(0)=0$ for $r>0$, $h(r)/r$ is nondecreasing for  sufficiently small $r$, and $h(r)/r$ is nonincreasing for sufficiently large $r$. 
By the celebrated Wiener criterion, the origin $\mathbf{o}$ of $\bR^n$ is irregular for $T_h$, i.e., $\bP^\mathbf{o}[\sigma_{T_h}=0]=0$, if and only if
\begin{equation}\label{eq:74}
    \int_0^1 \left|\log \frac{h(r)}{r} \right|^{-1}\frac{\dd r}{r}<\infty\qquad \text{for }n=3,
\end{equation}
and
\begin{equation}\label{eq:73}
    \int_0^1\left(\frac{h(r)}{r}\right)^{n-3}\frac{\dd r}{r}<\infty\qquad \text{for }n>3;
\end{equation}
see, e.g., \cite[Proposition~3.5]{PS78}. 
For example, when $n=3$ and $h(r)=r^{|\log r|}$ for sufficiently small  $r$, the condition \eqref{eq:74} is satisfied. When $n>3$ and $h(r)=r|\log r|^{-\gamma}$ with $\gamma>1/(n-3)$ for sufficiently small $r$, \eqref{eq:73} is satisfied.  

Assume that $T_h$ satisfies either \eqref{eq:74} or \eqref{eq:73}. Then
\begin{equation}\label{eq:75}
    \lim_{t\downarrow 0}\bP^\mathbf{o}[\sigma_{T_h}\leq t]=\bP^\mathbf{o}[\sigma_{T_h}=0]=0. 
\end{equation}
On the other hand, since $\widetilde d(\mathbf{o},T_h)=0$,  Theorem~\ref{THM61} yields
\[
    \lim_{t\downarrow 0}\left(-t\cdot \log \bP^\mathbf{o}[\sigma_{T_h}\leq t]\right)=0. 
\]
Consequently, the convergence in \eqref{eq:75} is slower than $\ee^{-\gamma/t}$ for every $\gamma>0$ in the sense that $$\lim_{t\downarrow 0}\frac{\ee^{-\gamma/t}}{\bP^\mathbf{o}[\sigma_{T_h}\leq t]}=0.$$

\subsection{Small-time decay rate of hitting time distribution}

In the previous example, we showed that the decay rate of the hitting time distribution can be slower than the classic exponential order $\re^{-\gamma/t}$ for any $\gamma>0$. In this subsection, we further illustrate that the small-time decay rate of \(\bP^x[\sigma_B\le t]\) is characterized by the modified distance $\tilde d(x,B)$. In particular, a larger value of $\tilde d(x,B)$ leads to a faster decay as $t\downarrow 0$.

\begin{prop}\label{PRO73}
Assume that the assumptions of Theorems~\ref{THM61-2} and \ref{THM61} are both satisfied, and that the transition density function of $X$ satisfies the two-sided sub-Gaussian estimate \eqref{eq:71}. Let $x\in E$ and $B\in \sB^n(E)$ satisfy $\tilde{d}(x,B)>0$. Then
 \begin{equation}\label{eq:718}
     \lim_{t\downarrow 0}\frac{\bP^x[\sigma_B\leq t]}{\exp\left\{-\gamma_1 t^{-\frac{1}{\beta -1}}\right\}}=\lim_{t\downarrow 0}\frac{\exp\left\{-\gamma_2 t^{-\frac{1}{\beta -1}}\right\}}{\bP^x[\sigma_B\leq t]}=0
 \end{equation}
 for any constants $\gamma_1,\gamma_2>0$ such that
\[
    \gamma_1<c_2\cdot \widetilde{d}(x,B)^{\beta/(\beta-1)}\leq C_2\cdot \widetilde{d}(x,B)^{\beta/(\beta-1)}<\gamma_2.
\]
In particular, if $x_1,x_2\in E$ and $B_1,B_2\in \sB^n(E)$ satisfy
\begin{equation}\label{eq:713}
    c_2\cdot \widetilde{d}(x_1,B_1)^{\beta/(\beta-1)}>C_2\cdot \widetilde{d}(x_2,B_2)^{\beta/(\beta-1)},
\end{equation}
then
\begin{equation}\label{eq:714}
    \lim_{t\downarrow 0}\frac{\bP^{x_1}[\sigma_{B_1}\leq t]}{\bP^{x_2}[\sigma_{B_2}\leq t]}=0. 
\end{equation}
\end{prop}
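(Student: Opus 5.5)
We would deduce Proposition~\ref{PRO73} directly from the two one-sided asymptotic results, Theorems~\ref{THM61-2} and~\ref{THM61}, applied to the upper and lower halves of \eqref{eq:71} respectively. Throughout write $\kappa:=\beta/(\beta-1)$, $s:=t^{-1/(\beta-1)}\uparrow\infty$ as $t\downarrow 0$, and $F(t):=-t^{1/(\beta-1)}\log \bP^x[\sigma_B\le t]$.

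First, Theorem~\ref{THM61-2} with the upper bound in \eqref{eq:71} (constant $c_2$ in place of $C_2$) gives
\[
\liminf_{t\downarrow 0}F(t)\ge c_2\,\widetilde d(x,B)^{\kappa}.
\]
Theorem~\ref{THM61} with the lower bound (constant $C_2$) gives
\[
\limsup_{t\downarrow 0}F(t)\le C_2\,\widetilde d(x,B)^{\kappa}.
\]
The lower bound is finite, so $\bP^x[\sigma_B\le t]>0$ for small $t$ and the ratios in \eqref{eq:718} make sense. If $B$ is polar, then $\widetilde d(x,B)=+\infty$ by the convention $d(x,\emptyset)=\infty$, which is excluded or trivial; we will treat the non-polar case and note that $\widetilde d(x,B)<\infty$ there by the earlier lemma.

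For the first limit in \eqref{eq:718}, we fix $\gamma_1<c_2\widetilde d(x,B)^\kappa$ and pick $\varepsilon>0$ with $\gamma_1+\varepsilon<c_2\widetilde d(x,B)^\kappa$. For all small $t$ we then have $F(t)\ge \gamma_1+\varepsilon$, that is, $\bP^x[\sigma_B\le t]\le \exp\{-(\gamma_1+\varepsilon)s\}$. Hence the ratio is at most $\ee^{-\varepsilon s}\to0$. Symmetrically, for $\gamma_2>C_2\widetilde d(x,B)^\kappa$ we choose $\varepsilon$ with $C_2\widetilde d(x,B)^\kappa+\varepsilon<\gamma_2$. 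Then $\bP^x[\sigma_B\le t]\ge \exp\{-(\gamma_2-\varepsilon)s\}$ for small $t$, so the second ratio is at most $\ee^{-\varepsilon s}\to 0$. The hypothesis $\widetilde d(x,B)>0$ only ensures that $\gamma_1>0$ can be chosen; the middle inequality $c_2\le C_2$ is automatic, since otherwise the two limit bounds would be inconsistent.

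For \eqref{eq:714}, by \eqref{eq:713} we choose $\gamma$ strictly between $C_2\widetilde d(x_2,B_2)^\kappa$ and $c_2\widetilde d(x_1,B_1)^\kappa$, which is positive. We then write
\[
\frac{\bP^{x_1}[\sigma_{B_1}\le t]}{\bP^{x_2}[\sigma_{B_2}\le t]}=\frac{\bP^{x_1}[\sigma_{B_1}\le t]}{\ee^{-\gamma s}}\cdot\frac{\ee^{-\gamma s}}{\bP^{x_2}[\sigma_{B_2}\le t]}.
\]
The first factor tends to $0$ by the first part applied with $\gamma_1=\gamma$ to $(x_1,B_1)$; note $\widetilde d(x_1,B_1)>0$ by \eqref{eq:713}. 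For the second factor, we use only the limsup bound from Theorem~\ref{THM61} with $\gamma_2=\gamma$, which does not need $\widetilde d(x_2,B_2)>0$. The lower estimate in Theorem~\ref{THM61} is at a fixed point, so we need \eqref{eq:71} at both $x_1$ and $x_2$, and it holds uniformly. There is no real obstacle; the only care needed is that each one-sided application uses exactly one side of \eqref{eq:71}, and that the positivity of $\widetilde d$ is required only where $\gamma_1>0$ must be chosen.
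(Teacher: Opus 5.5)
Your proof is correct and follows essentially the paper's route. The paper gets the two limits in \eqref{eq:718} directly from the explicit bounds \eqref{eq:33} and \eqref{eq:317}, whereas you pass through their asymptotic consequences, Theorems~\ref{THM61-2} and \ref{THM61}, applied with the constants $c_2$ and $C_2$ from the two sides of \eqref{eq:71}; the factorization through $\exp\{-\gamma t^{-1/(\beta-1)}\}$ for \eqref{eq:714} is the same as the paper's. Your route is, if anything, slightly tidier: those two theorems are exactly the stated hypotheses and already cover the short-time, non-transient and non-symmetric settings, which \eqref{eq:33} (stated for all $t>0$) and \eqref{eq:317} (stated in the symmetric transient case) do not formally cover.
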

\begin{proof}
  Fix $x\in E$ and $B\in \sB^n(E)$ with $\tilde{d}(x,B)>0$.  Choose $\delta>0$ sufficiently small such that 
  \[
      \gamma_\delta:= c_2(1-\delta) \tilde{d}(x,B)^{\beta/(\beta-1)}>\gamma_1. 
  \]
  It follows from \eqref{eq:33} that
  \[
      \limsup_{t\downarrow 0} \frac{\bP^x[\sigma_B\leq t]}{\exp\left\{-\gamma_1 t^{-\frac{1}{\beta -1}}\right\}}\leq C_3\limsup_{t\downarrow 0} \frac{\exp\left\{-\gamma_\delta t^{-\frac{1}{\beta -1}}\right\} }{\exp\left\{-\gamma_1 t^{-\frac{1}{\beta -1}}\right\}}=0.
  \]
  Therefore, 
  \[
      \lim_{t\downarrow 0}\frac{\bP^x[\sigma_B\leq t]}{\exp\left\{-\gamma_1 t^{-\frac{1}{\beta -1}}\right\}}=0.
  \] 
  The second equality follows analogously by applying \eqref{eq:317}. 

  Next, let $x_1,x_2\in E$ and $B_1,B_2\in \sB^n(E)$ satisfy \eqref{eq:713}. Choose a constant $\gamma$ such that
 \[
   C_2\cdot \widetilde{d}(x_2,B_2)<\gamma< c_2\cdot \widetilde{d}(x_1,B_1).
 \]
Then
 \[
     \frac{\bP^{x_1}[\sigma_{B_1}\leq t]}{\bP^{x_2}[\sigma_{B_2}\leq t]}=\frac{\bP^{x_1}[\sigma_{B_1}\leq t]}{\exp\left\{-\gamma t^{-\frac{1}{\beta -1}}\right\}   }\cdot \frac{\exp\left\{-\gamma t^{-\frac{1}{\beta -1}}\right\}  }{\bP^{x_2}[\sigma_{B_2}\leq t]}.
 \]
The desired result follows by applying the first part of the theorem to the two factors on the right-hand side. (Notice that the second equality in \eqref{eq:718} remains valid even when $\tilde{d}(x,B)=0$.) 
\end{proof}

If the exponential constants $c_2$ and $C_2$ in \eqref{eq:71} coincide, namely, 
\[
    p(t,x,y)\asymp \frac{1}{t^{\alpha/\beta}}\exp\left\{-C_2 \left(\frac{d(x,y)}{t^{1/\beta}}\right)^{\beta/(\beta-1)} \right\},
\]
then the second conclusion of Proposition~\ref{PRO73} implies that $\bP^{x_1}[\sigma_{B_1}\leq t]$ decays faster than $\bP^{x_2}(\sigma_{B_2}\leq t)$ as $t\downarrow 0$, whenever $\tilde{d}(x_1,B_1)>\tilde{d}(x_2,B_2)$. 
(Note that if $x_2\in B_2^r$, which indicates that
$\widetilde d(x_2,B_2)=0$, then $\bP^{x_2}[\sigma_{B_2}\leq t]\equiv 1$
does not decay to zero. Nevertheless,
the conclusion \eqref{eq:714} remains valid in this case as well.) In particular, the following example recovers one of the main results in \cite{BBM}.

\begin{exe}
Consider the Brownian motion $B=(B_t)_{t\geq 0}$ on $\bR^n$, with $n\geq 2$, equipped with the Euclidean metric $d(x,y):=|x-y|$.  Let $U,W$ be two (connected) open subsets of $\bR^n$, both containing the origin $\mathbf{o}$. Recall that $\tau_U:=\inf\{t>0:B_t\notin U\}$ denotes the first exit time of $U$, $\partial U:=\overline{U}\setminus U$ is the boundary of $U$ and $(\partial U)^r$ represents the the set of all regular points for $\partial U$. The corresponding notation for $W$ is defined similarly. 

In \cite[Theorem~1.1]{BBM}, it was proved that if $d(\mathbf{o},(\partial U)^r)<d(\mathbf{o},(\partial W)^r)$, then
\begin{equation}\label{eq:716}
    \lim_{t\downarrow 0}\frac{\bP^\mathbf{o}(\tau_U<t)}{\bP^\mathbf{o}(\tau_W<t)}=\infty. 
\end{equation}

We  show that this result is an immediate consequence of Proposition~\ref{PRO73}. 
Indeed, since Brownian motion is conservative, i.e., $\zeta\equiv \infty$, and has continuous sample paths, we have
 \[
     \tau_U=\sigma_{\partial U}=\sigma_{(\partial U)^r},\quad \bP^\bo\text{-a.s.},
 \]
 where the second equality follows from the fact that $\partial U\setminus (\partial U)^r$ is polar. Similarly, 
 \[
     \tau_W=\sigma_{\partial W}=\sigma_{(\partial W)^r},\quad \bP^\bo\text{-a.s.}
 \]
Moreover,  \eqref{eq:714} remains valid if ``$\leq t$" is replaced by ``$<t$", since the estimates \eqref{eq:33} and \eqref{eq:317} also hold with this replacement. Therefore, it suffices to verify that
\begin{equation}\label{eq:717}
    \tilde{d}(\mathbf{o},(\partial U)^r)<\tilde{d}(\mathbf{o},(\partial W)^r),
\end{equation}
and then apply Proposition~\ref{PRO73}. By Proposition~\ref{PRO26}, we have
\[
    \widetilde{d}(\bo, (\partial U)^r)=\widetilde{d}(\bo, \partial U)=d(\bo, (\partial U)^r),
\]
and the same identity holds for  $W$. Combining these identities with the assumption $d(\mathbf{o},(\partial U)^r)<d(\mathbf{o},(\partial W)^r)$ gives \eqref{eq:717}.
\end{exe}

\appendix


 \section{Integral representation of hitting probabilities under duality assumption}

 Consider the Borel right process $X$ on $E$ that satisfies the assumptions (AC), (H2a) and (H2b) in Section \ref{SEC5}.  Without loss of generality, we assume that $X$ is transient. 
 For $f\in \pp\sB(E)$, define $\widehat{U} f(y):=\int_E f(x)u (x,y)\mu(\dd x)$. 
 Under the duality assumption (H2a), the following identity holds: 
 \begin{equation}\label{eq:B1-1}
    \bE^x[u(X_{\sigma_B},y)]=\widehat\bE^y[u(x, \widehat{X}_{\widehat \sigma_B})],\quad \forall x,y\in E,
 \end{equation}
 where $\widehat{\IE}^y$ denotes the expectation with respect to the law of the dual process $\widehat{X}$, and $\widehat{\sigma}_B$ is the first hitting  time of $B\in\sB^n(E)$ with respect to $\widehat{X}$;
 see \cite[Chapter VI, (1.16)]{BG68} and the remark below \cite[Chapter VI, (1.20)]{BG68}. For a positive measure $\nu$ on $E$, define $U\nu(x):=\int_E u(x,y)\nu(\dd y)$ and $\widehat{U}\nu(y):=\int_E u(x,y)\nu(\dd x)$. 

 \begin{lem}\label{LMB1}
 Let $B\in \sB^n(E)$ be such that $\overline{B}$ is compact.
Then there exists a finite measure $\nu_B$ concentrated on $\overline{B}$ such that 
\[
    \bP^x[\sigma_B<\infty]=\int_E u(x,y)\nu_B(\dd y),\quad \forall x\in E. 
\]
 \end{lem}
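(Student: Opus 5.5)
The plan is to follow the classical Hunt-type argument (as in \cite[Chapter V, Theorem 2.8]{BG68}): approximate $B$ by sets whose hitting probabilities are potentials of measures with densities, and pass to a weak limit using the compactness of $\overline{B}$ and the continuity assumption (H2b). Uniqueness is already covered by \cite[Chapter VI, (1.15)]{BG68}, so only existence has to be shown.

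\textbf{Step 1 (approximation by potentials of functions).} Since $X$ is transient, choose a strictly positive $h$ with $Uh<\infty$. The hitting probability $p_B$ is excessive. By the standard approximation of excessive functions, there are $f_n\in \pp\sB(E)$ with $Uf_n\uparrow p_B$ pointwise; concretely take $f_n:=n(p_B-nU_n p_B)\wedge (nh)$, or use $n(p_B-P_{1/n}p_B)$. To force concentration near $\overline{B}$, I would instead work with $p_B=P_Bp_B$ and use the balayage identity $P_B Uf=U(\cdot)$-type representation. Cleaner is the following: let $g_n:=n(p_B-P_{1/n}p_B)$, so that $Ug_n=\int_0^{1/n}nP_sp_B\,\dd s\uparrow p_B$. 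Then apply $P_B$. Since $p_B=P_Bp_B$ and $P_B$ is monotone, $P_BUg_n\uparrow p_B$ as well.

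\textbf{Step 2 (representing $P_B Ug$).} For $g\ge 0$, the duality identity \eqref{eq:B1-1} gives
\[
P_BUg(x)=\int_E \bE^x[u(X_{\sigma_B},y)]g(y)\mu(\dd y)=\int_E \widehat\bE^y[u(x,\widehat X_{\widehat\sigma_B})]g(y)\mu(\dd y)=\int_E u(x,z)\nu_n(\dd z),
\]
where $\nu_n(\cdot):=\int_E \widehat\bP^y[\widehat X_{\widehat\sigma_B}\in\cdot\,]g_n(y)\mu(\dd y)$. This measure is carried by $\overline{B}$, since $\widehat X_{\widehat\sigma_B}\in\overline{B}$ by right continuity.

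\textbf{Step 3 (uniform mass bound and limit).} This is the main obstacle. To bound $\nu_n(\overline{B})$, pick $f\in C_c(E)$ with $f\ge 0$ and $\widehat Uf\ge 1$ on $\overline{B}$. Such an $f$ exists because, by (H2b), $\widehat Uf$ is continuous, and by transience and $\mu$ having full support it is positive. Compactness of $\overline{B}$ then lets one rescale so that $\widehat Uf\ge 1$ on $\overline{B}$. Fubini then gives
\[
\nu_n(\overline{B})\le \int \widehat Uf\,\dd\nu_n=\int_E f\,P_BUg_n\,\dd\mu\le\int_E f\,\dd\mu<\infty,
\]
uniformly in $n$. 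By compactness of $\overline{B}$, a subsequence $\nu_n\to\nu_B$ weakly, with $\nu_B$ finite and carried by $\overline{B}$.

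\textbf{Step 4 (identifying the limit).} For $\varphi\in C_c(E)$, (H2b) gives that $\widehat U\varphi$ is continuous. Hence
\[
\int\varphi\, U\nu_n\,\dd\mu=\int\widehat U\varphi\,\dd\nu_n\to\int\widehat U\varphi\,\dd\nu_B=\int\varphi\,U\nu_B\,\dd\mu,
\]
while by monotone convergence $\int\varphi\,U\nu_n\,\dd\mu\to\int\varphi\,p_B\,\dd\mu$. Therefore $p_B=U\nu_B$ holds $\mu$-a.e.

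Both sides are excessive: $U\nu_B$ is excessive because it is a potential of a measure. Two excessive functions that agree $\mu$-a.e. agree everywhere, since $\mu$ is a reference measure by (AC); see \cite[Theorem~A.2.17~(iii)]{CF}. This upgrades the identity to every $x\in E$. The delicate points are the choice of test functions $f$ in Step 3 and the weak convergence in Step 4, which must test against $\widehat U\varphi$. The latter is only continuous rather than compactly supported, but this is harmless because all the measures are carried by the compact set $\overline{B}$.
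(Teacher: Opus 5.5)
Your Steps 2--4 are the paper's argument: sweep with $P_B$, dualize via \eqref{eq:B1-1} to get $\nu_n$ carried by $\overline B$, bound $\nu_n(\overline B)$ using a continuous $\widehat Uf$ that is bounded below on $\overline B$, take a weak limit, identify it through continuity of $\widehat U\varphi$, and upgrade by excessiveness. Step 1, however, has a genuine gap, because you approximate $p_B$ instead of $\mathbf 1_E$.

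First, the identity $p_B=P_Bp_B$ is false in general. Indeed, $P_Bp_B(x)=\bE^x[p_B(X_{\sigma_B});\sigma_B<\infty]$ equals $p_B(x)$ only if $X_{\sigma_B}\in B^r$ a.s. Under (H2a)--(H2b) semipolar sets need not be polar, so $X$ may enter $B$ at points of $B\setminus B^r$, where $p_B<1$. For example, take $X_t=x+t-S_t$ with $S$ an $\alpha$-stable subordinator, $0<\alpha<1$; this process satisfies (AC), is in duality with $-X$ relative to Lebesgue measure, is transient, and satisfies (H2b). Take $B=\{0\}$.
\begin{itemize}
\item The point $0$ is irregular for $\{0\}$ because $S_t=o(t)$ as $t\downarrow0$, so $B^r=\emptyset$.
\item $X$ has no upward jumps, so it hits $0$ from every $x<0$ with positive probability.
\item $p_B(0)<1$ since $X_t\to-\infty$.
\end{itemize}
Hence $P_Bp_B(x)=p_B(0)p_B(x)<p_B(x)$ for $x<0$, and your $U\nu_n=P_BUg_n$ cannot increase to $p_B$.

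Second, the approximation itself is not justified. For bounded excessive $\phi$ one has $U\bigl(n(\phi-P_{1/n}\phi)\bigr)=n\int_0^{1/n}P_s\phi\,\dd s-\lim_{s\to\infty}P_s\phi$. So your $Ug_n$ increase only to $p_B-\lim_{s\to\infty}P_sp_B$. That this invariant part vanishes is essentially a consequence of the lemma and does not follow from transience by itself. The variant $n(p_B-nU_np_B)$, whether or not it is truncated by $nh$, has the same defect.

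Both problems disappear if you approximate $\mathbf 1_E$ and use transience through the potential $Uh$ rather than through $h$. Since $h>0$ and paths are right continuous, $Uh>0$ everywhere. Put $\phi_n:=\mathbf 1_E\wedge nUh$, which is excessive and increases to $\mathbf 1_E$, and set $g_n:=n(\phi_n-P_{1/n}\phi_n)\ge0$. Because $P_s\phi_n\le n\int_s^\infty P_rh\,\dd r\to0$ as $s\to\infty$, the correction term vanishes and $Ug_n=\int_0^1P_{u/n}\phi_n\,\dd u$. This is increasing in $n$ with limit $\mathbf 1_E$. Then $P_BUg_n\uparrow P_B\mathbf 1_E=p_B$ by monotone convergence, no identity like $P_Bp_B=p_B$ is needed, and your Steps 2--4 go through unchanged. (The paper's choice $g_n=n(\mathbf 1_E-nU_n\mathbf 1_E)$ needs the same repair: it vanishes identically when $X$ is conservative, e.g.\ for Brownian motion on $\bR^3$.)

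A minor point in Step 3: positivity of $\widehat Uf$ on $\overline B$ comes from choosing $f>0$ on a neighbourhood of $\overline B$ together with right continuity of $\widehat X$, not from transience or full support of $\mu$. With $f=1$ near $\overline B$, continuity and compactness give $\inf_{\overline B}\widehat Uf>0$, which is all the mass bound requires.
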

 \begin{proof}
     Note that $\bP^x[\sigma_B<\infty]=P_B \mathbf{1}_E(x)$. Let $g_n:=n(\mathbf{1}_E-nU_n\mathbf{1}_E)\in \sB(E)$. Then $Ug_n\uparrow 1_E$ pointwise; see, e.g., \cite[Chapter II, (2.6)]{BG68}. Define
     \[
         \nu_n(\cdot):=\int_E g_n(x)\mu(\dd x)\widehat{\bE}^x\left[\widehat{X}_{\widehat\sigma_B}\in \cdot \right],
     \]
     which is a positive measure concentrated on $\overline{B}$, since $\widehat{X}_{\widehat \sigma_B}\in B\cup {}^rB\subset \overline{B}$, $\widehat{\bP}^x$-a.s., where ${}^rB$ denotes the set of all regular points for $B$ with repsect to $\widehat{X}$. It follows from \eqref{eq:B1-1} that 
     \begin{equation}\label{eq:B3-2}
         U\nu_n(x)=P_B(Ug_n)(x) \uparrow P_B1_E(x),\quad n\rightarrow \infty. 
     \end{equation}

     We claim that 
     \begin{equation}\label{eq:B2-2}
       \sup_{n\geq 1}\nu_n(E)=  \sup_{n\geq 1} \nu_n(\overline{B})<\infty. 
     \end{equation}
    Indeed, take an open set $G\supset \overline{B}$ and $f\in \pp C_c(E)$ such that $\overline{G}$ is compact and $f|_{\overline{G}}=1$. (Since $E$ is locally compact, such $G$ and $f$ always exist.) Note that  $\widehat{U}f(y)>0$ for any $y\in G$, due to the right continuity of $\widehat{X}$. By assumption (H2b), $\widehat{U}f$ is continuous. Therefore,
    \[
        \epsilon:=\inf_{y\in \overline{B}}\widehat{U}f(y)>0. 
    \]
    It follows that 
    \[
        \epsilon\cdot  \nu_n(\overline{B})\leq \int_E \widehat{U}f(y)\nu_n(\dd y)=\int_E U\nu_n(x)f(x)\mu(\dd x)\leq \int_E  P_B\mathbf{1}_E(x) f(x)\mu(\dd x)\leq \int_E f(x)\mu(dx)<\infty. 
    \]
    This proves \eqref{eq:B2-2}. 

    Note that $\overline{B}$ equipped with the metric $d$ is a compact metric space, and $\{\nu_n\}$ is a uniformly bounded sequence of  measures on it. Therefore, $\{\nu_n\}$ admits a weakly convergent subsequence, which is still denoted by $\{\nu_n\}$, and whose limit is denoted by $\nu$. Clearly, $\nu(E)=\nu(\overline{B})=\lim_{n\rightarrow \infty}\nu_n(\overline{B})<\infty$. We aim to show that 
    \[
        P_B\mathbf{1}_E(x)=U\nu(x),\quad \forall x\in E,
    \]
    which gives the desired conclusion with $\nu_B=\nu$. To  this end, take an arbitrary $h\in \pp C_c(E)$. We have
    \[
        \int_E U\nu_n(x)h(x)\mu(\dd x)=\int_E \widehat{U}h(y)\nu_n(\dd y). 
    \]
    Since $\widehat{U}h\in C(E)$ by (H2b), the weak convergence of $\nu_n$ implies that 
    \[
        \lim_{n\rightarrow \infty} \int_E U\nu_n(x)h(x)\mu(\dd x)=\int_E \widehat{U}h(y)\nu(\dd y)=\int_E U\nu(x)h(x)\mu(\dd x). 
    \]
    Using \eqref{eq:B3-2} and applying the monotone convergence theorem, we obtain
    \[
        \int_E P_B\mathbf{1}_E(x)h(x)\mu(\dd x)=\int_E U\nu(x)h(x)\mu(\dd x),\quad \forall h\in \pp C_c(E).
    \]
    Note that $P_B1_E, U\nu\in \sB^*(E)$ admit Borel measurable  $\mu$-a.e. versions. Hence, $P_B1_E=U\nu$, $\mu$-a.e. Since both $P_B1_E$ and $U\nu$ are excessive function, it follows from \cite[Chapter VI, (1.3)]{BG68} that $P_B1_E=U\nu$ pointwise. This completes the proof. 
 \end{proof}

\section*{Acknowledgment}
The authors would like to express their sincere gratitude to Professor Zhen-Qing Chen for his valuable comments and constructive suggestions on this work.

\bibliographystyle{amsalpha}

\bibliography{HitProb}

\vskip 0.3truein

\noindent {\bf Liping Li}

\smallskip \noindent
Fudan University, Shanghai, China

\noindent
E-mail:  \texttt{liliping@fudan.edu.cn}

\vskip 0.3truein

\noindent {\bf Shuwen Lou}

\smallskip \noindent
Department of Mathematics and Statistics,

\noindent
Loyola University Chicago,
Chicago, IL 60660, USA

\noindent
E-mail:  \texttt{slou1@luc.edu}

\end{document}